\documentclass[11pt,a4paper]{article}

\usepackage[UKenglish]{babel}
\usepackage[titletoc,title]{appendix}
\usepackage{amsmath,amsfonts,amssymb,bm,amsthm, mathtools}
\usepackage[colorlinks,citecolor=blue,linkcolor=blue,pagebackref]{hyperref}
\usepackage{tikz, pgf, esint}
\usetikzlibrary{math}
\usetikzlibrary{patterns}
\usepackage{graphicx}
\usepackage{enumerate}
\usepackage{xcolor,cancel}
\usepackage{longtable}
\usepackage{enumerate}

\usetikzlibrary{shapes,arrows}
\usetikzlibrary{intersections,shapes.arrows}
\usepackage{caption}

\usepackage{nicefrac}

\numberwithin{equation}{section}
\newtheorem{theorem}{Theorem}[section]
\newtheorem{lemma}[theorem]{Lemma}
\newtheorem{corollary}[theorem]{Corollary}
\newtheorem{proposition}[theorem]{Proposition}
\theoremstyle{definition}

\newtheorem{definition}[theorem]{Definition}
\newtheorem{assumption}[theorem]{Assumption}
\theoremstyle{remark}
\newtheorem{remark}[theorem]{Remark}

\usepackage{nicefrac}

\newcommand{\R}{\mathbb{R}}
\newcommand{\Z}{\mathbb{Z}}
\newcommand{\N}{\mathbb{N}}
\newcommand{\E}{\mathbb{E}}
\newcommand{\PP}{\mathbb{P}}

\newcommand{\cA}{\mathcal{A}}
\newcommand{\cB}{\mathcal{B}}
\newcommand{\cI}{\mathcal{I}}
\newcommand{\cL}{\mathcal{L}}
\newcommand{\cM}{\mathcal{M}}

\newcommand{\dist}{\operatorname{dist}}

\renewcommand{\tilde}{\widetilde}
\renewcommand{\epsilon}{\varepsilon}

%%%% remove this if you don't like sans serif
%\usepackage{ebgaramond}
\usepackage[T1]{fontenc}
%\renewcommand{\familydefault}{cmss}
%%%%
%%%%%%%%%%%%%%%%%%%%%%%%%%%%%%%%%%
\usepackage[top=1in, bottom=1in, left=1in, right=1in]{geometry}
\setlength{\headsep}{1cm}
\setlength{\footskip}{1cm}
%\textwidth15cm
%%%%%%%%%%%%%%%%%%%%%%%%%%%%%%%%%%
\usepackage{fancyhdr}
\pagestyle{myheadings}
\setlength{\headheight}{15pt}

\pagestyle{fancy}
\fancyhf{}
%\fancyhead[FRE,FLO]{}%week number in weekly handouts
\lhead{{\bfseries M.~Capoferri} and {\bfseries M.~T\"aufer}}
\rhead{{\bfseries Page \thepage}}
\rfoot{{\bfseries Anderson localization in high-contrast random media}}

\usepackage{relsize}%for resizing mathematical symbols

%%%%%%%%%%%%%%%%%%%%%%%%%%%%%%
%when checking references, uncomment
%\usepackage{refcheck}  
%%%%%%%%%%%%%%%%%%%%%%%%%%%%%%

%%%% ORCID %%%%
\usepackage{orcidlink}
%%%%%%%%%%%%%%%%

\allowdisplaybreaks

%% Michael Levitin's magic for pointers in bibliography %%
%usepackage[colorlinks,allcolors=blue,pagebackref]{hyperref}
\usepackage{pifont}
\renewcommand*{\backrefalt}[4]{%
\ifcase #1 %
No citations%
\or
\ding{43}~p.~#2%
\else
\ding{43}~pp.~#2%
\fi}

%%%%

\usepackage[normalem]{ulem}

%%%%
%%%%

\begin{document}

\title{Anderson localization in high-contrast media\\ with random spherical inclusions
}

\author{
Matteo Capoferri\,\orcidlink{0000-0001-6226-1407}\thanks{Dipartimento di Matematica ``Federigo Enriques'', Università degli Studi di Milano, Via C.~Saldini 50, 20133 Milano Italy \emph{and} Maxwell Institute for Mathematical Sciences, Edinburgh \&
Department of Mathematics,
Heriot-Watt University,
Edinburgh EH14 4AS,
UK;
\text{matteo.capoferri@unimi.it},
\url{https://mcapoferri.com}.
}
\and
Matthias T\"aufer\,\orcidlink{0000-0001-8473-2310}\thanks{
CERAMATHS, Université Polytechnique Hauts-de-France,
Le Mont Houy 59313 Valenciennes Cedex 09,
France;
\text{Matthias.Taufer@uphf.fr}.
}
}

%%%% this removes footnote "crosses" by the authors' names
%\renewcommand\footnotemark{}
%%%%

%\date{version 7; 25 November 2025; \LaTeX{ed} \today}
\date{2 December 2025 \vspace{-.75cm}}

\maketitle
\begin{abstract}

 We study spectral properties of partial differential operators modelling composite materials with highly contrasting constituents, comprised of soft spherical inclusions with random radii dispersed in a stiff matrix.
 Such operators have recently attracted significant interest from the research community, including in the context of stochastic homogenization. 
 In particular, it has been proved that the spectrum of these operators may feature a band-gap structure in the regime where heterogeneities take place on a sufficiently small scale. However, the nature of the limiting (as the small scale tends to zero) spectrum in the above setting 
 is non-classical and not completely understood.
 In this paper we prove for the first time that Anderson localization occurs near band edges, thus shedding light on the limiting spectral behaviour.
 Our results rely on recent nontrivial advancements in quantitative unique continuation for PDEs, in combination with assumptions on the model that are standard in the Anderson localization literature, and which we plan to relax in future works.

\

{\bf Keywords:} Anderson localization, high-contrast composites, random media, stochastic homogenisation, unique continuation.

\

{\bf 2020 MSC classes: }
Primary 
47B80, % Random operators
74A40. % Random materials and composite materials
Secondary 
35B27, % Homogenization in context of PDEs; PDEs in media with periodic structure 
35R60, % Partial differential equations with randomness, stochastic partial differential equations
74S25, % Spectral and related methods applied to problems in solid mechanics
82B44. % Disordered systems (random Ising models, random Schrödinger operators, etc.) 
\end{abstract}

\tableofcontents

\allowdisplaybreaks

\section{Introduction}
\label{sec:Introduction}

The goal of this paper is to make a first step towards a rigorous mathematical understanding of Anderson localization in the context of high-contrast random media.

Over the years, high-contrast media, that is, composite materials whose constituents possess very different (or ``highly contrasting'') physical properties, have attracted the attention of both mathematicians and applied scientists, because, unlike more traditional composites, they possess special features allowing for effective manipulation of propagating waves.
Indeed, in this class of materials, often modelled as a collection of ``soft'' inclusions dispersed in a ``stiff'' matrix, the interplay between the characteristic size $\epsilon$ of heterogeneities and the ratio $\epsilon^2$ between the diffusion coefficients of inclusions and matrix brings about resonances at the microscopic scale which yield a spectrum with band-gap structure. This particular relative scaling is often referred to as \emph{double-porosity} resonant regime.

From a mathematical perspective, the traditional approach to study composite materials with small heterogeneities is via homogenisation theory.
An immediate mathematical challange one encounters when trying to do so for high-contrast media is that the family of operators $\mathcal{A}^\epsilon:=- \operatorname{div}\,a^\epsilon\, \nabla$  ($a^\epsilon$ being the diffusion coefficients) describing them is not uniformly elliptic as $\epsilon\to 0$, ultimately leading to a loss of compactness in the limit. Hence, classical homogenisation does not work.

In the high-contrast \emph{periodic} setting --- that is, when inclusions are arranged periodically in space ---
pioneering works by Allaire first \cite{Allaire-92} and by Zhikov shortly thereafter \cite{Zhikov-00,Zhikov-04} demonstrated that one can still define a limiting ``homogenised'' operator in the sense of the so-called \emph{two-scale convergence}. This limiting operator possesses a two-scale nature, in that it accounts for both macroscopic and microscopic properties of the material, coupled in a particular way. Remarkably, Zhikov showed that the two-scale limit yields convergence of spectra in the sense of Hausdorff, thus making this technique well-suited for the spectral analysis of high-contrast media.

From the point of view of applications, requiring perfect periodicity is often too strong a constraint, and one would like to allow for some randomness in the coefficients, the geometry of the inclusions, or both. 
Whilst there exists extensive literature on high-contrast periodic media, see, e.g., \cite{CherednichenkoSZ-06, CherednichenkoC-15, CherednichenkoC-18, CherednichenkoVZ-25, CooperKS-23, CherednichenkoKVZ-25}, a systematic study of the random setting was initiated only recently, so that the literature is limited and much less is known. It should be mentioned that this is not accidental: moving from periodic to random operators involves a significant step change in both conceptual and technical challenges.

Building upon earlier work by Zhikov and Piatnitski \cite{ZhikovP-06}, Cherdantsev, Cherednichenko and Vel\v{c}i\'{c} in \cite{CherdantsevCV-18} proved that on bounded domains one has stochastic two-scale resolvent convergence as well as Hausdorff spectral convergence for high-contrast operators with random coefficients. In the whole space $\R^d$ the picture is rather different: indeed, a few years later the same set of authors showed in \cite{CherdantsevCV-21} that, although one still has two-scale convergence, in general the spectrum of the limiting operator is a proper subset of the limiting spectrum $\lim_{\epsilon\to 0}\sigma(\mathcal{A}^\epsilon)$. That is, the operators $\mathcal{A}^\epsilon$ shortly before the limit possess spectrum that `is not seen' by the limiting operator. Nevertheless, by examining a range of physically meaningful examples, they showed that the limiting spectrum (and, thus, the spectrum of the homogenised operator) can exhibit infinitely many gaps.

So as not to overload the paper and disperse the focus, we refrain here from discussing subsequent literature on high-contrast stochastic homogenisation (mostly qualitative so far, with a couple of very recent quantitative results), limiting ourselves to listing those references closest to the setting of our paper, for the benefit of the reader: \cite{CapoferriCV-23,CapoferriCV-25, BellaCCV-25,BonhommeDG-25,AmazianePZ-23}.

\

Informal, non-rigorous examination of specific examples where Hausdorff convergence of spectra fails in the stochastic setting seems to suggest that this type of materials may exhibit dense pure point spectrum with well localized eigenfunctions near the edges of the spectral bands --- a phenomenon known under the name of \emph{Anderson localization}.

{
The theory of Anderson localization goes back to Anderson's pioneering work~\cite{Anderson-58} who identified it as \emph{absence of diffusion} in certain random lattice operators.
Since then, the mathematical treatment of Anderson localization has had a long history with numerous contributions. 
We highlight~\cite{GoldsheidMP-77} as the first mathematically rigorous work proving localization in continuum models in one dimension, and~\cite{HoldenM-84} for the first proof of Anderson localization in continuum models in $\R^d$ at the bottom of the spectrum.
Furthermore, we should like to mention~\cite{FroehlichS-84} where the most common technique to prove localization, that is, \emph{multiscale analysis}, has been developed (albeit for the special case of discrete Laplacians), and refer the reader to the monographs~\cite{CarmonaL-90, Stollmann-01, Veselic-08, AizenmanW-15} for more details and references.

The bulk of works on Anderson localization for operators in $L^2(\R^d)$ treat the \emph{alloy-type} or \emph{continuum Anderson model}
\begin{equation}
    \label{eq:alloy_type}    
    H_{\omega}
    =
    - \Delta
    +
    V_\omega
    \quad
    \text{where}
    \quad
    V_\omega(x)
    =
    \sum_{j \in \Z^d}
    \omega_j \,,
    u(x-j)
\end{equation}
where $\omega = (\omega_j)_{j \in \Z^d}$ is a family of (usually bounded) independent and identically distributed random variables and $u$ is a (usually bounded, nonnegative, and compactly supported) profile function, modeling the influence of a single atom.
The advantage of this model is that it gives rise to linear perturbation theory: changes to the elementary random variables correspond to linear perturbations of the operator $H_\omega$.

There are some exceptions to this in which localization has been proved, the most noteworthy being the \emph{Poisson model}~\cite{GerminetHK-07}
\[
    V_\omega (x)
    =
    \sum_{y \in \mathcal{P}(\omega)}
    u(x-y)\,,
\]
where $\mathcal{P} \subset \R^d$ is a Poisson point process, the~\emph{random displacement model}~\cite{KloppLNS-12} 
\[
    V_\omega(x)
    =
    \sum_{j \in \Z^d}
    u(x - j - \eta_j)\,,
\]
where the family $(\eta_j)_{j \in \R^d}$ is a family of independent and identically distributed, $\R^d$-valued \emph{random displacements}, the \emph{random breather model}~\cite{TaeuferV-15, NakicTTV-18}
\[
    V_\omega(x)
    =
    \sum_{j \in \Z^d}
    \mathbf{1}_{B_{\omega_j}(j)}\,,
\]
where $B_{\omega_j}(j)$ are balls, centered at $j$ with independent and identically distributed \emph{random radii} $(\omega_j)_{j \in \Z^d}$ --- amongst others, including \emph{Gaussian random potentials}~\cite{FischerLM-00}.
A reasonably systematic understanding of random operators where the randomness does not enter linearly has only recently been developed~\cite{TaeuferV-15, NakicTTV-18} and has required significant progress in quantitative unique continuation principles~\cite{BourgainK-05, RojasMolinaV-13,Klein-13}.

All the models discussed so far have in common is that the randomness affects the \emph{potential}, that is the zeroth-order term of a second-order elliptic differential operator.
Proofs of Anderson localization in models where the randomness affects higher-order terms of the operator are even rarer. 
For perturbations affecting first order terms of the differential operators, the most notable ones are examples of localization by random magnetic fields~\cite{ErdosH-12}, as well as some examples of abstract perturbations under technical positivity assumptions~\cite{BorisovTV-21}.

As for randomness affecting the second-order term of a differential operator, the analysis is more challenging still, and necessitates a more delicate spectral analysis. 
The only examples of proofs of Anderson localization in $\R^d$ we are aware of are~\cite{Stollmann-98, CombesHT-99} where the randomness again enters linearly in an Alloy-type fashion, i.e.
\begin{equation}
    \label{eq:div_grad_alloy_type}
    H_\omega 
= 
- \operatorname{div} a_\omega(x) \nabla
\quad
\text{with}
\quad
a_\omega(x)
=
\mathbf{1}
+
\sum_{j \in \mathbb{Z}^d}
\omega_j u(x-j).
\end{equation}

There is an additional difficulty in second-order random operators in $\R^d$: localization is not expected near the bottom of the spectrum, since it is not a so-called \emph{fluctuation boundary}~\cite{Stollmann-98}, but rather near the edges of spectral band gaps.
Anderson localization has indeed been proved near higher spectral band edges but only in cases where the randomness enters linearly, that is either in Alloy-type models of the form~\eqref{eq:alloy_type}
\[
H_\omega 
=
H_0 + V_\omega,
\quad
V_\omega(x)
=
\sum_{j \in \Z^d}
\omega_j
u(x-j)\,,
\]
where $H_0$ is an operator with a band-gap structure --- e.g., of the form $- \Delta + V_{\operatorname{per}}$ for a periodic potential $V_{\operatorname{per}}$, see~\cite{BarbarouxCH-97, KirschSS-98, Klopp-99, Veselic-02, KloppW-02}, or the Landau operator~\cite{CombesH-96} ---
or in divergence-type operators where the conductivity is of alloy-type form~\eqref{eq:div_grad_alloy_type}, see~\cite{Stollmann-98,CombesHT-99}.

\

Indeed, since spectral perturbation theory is inherently easier near the bottom of the spectrum, the treatment of Anderson localization near higher band edges has required more technical effort. 
We are aware of three main approaches in the literature. 

The first, most classic approach relies on a condition of \emph{thinness of random variables near their extrema}. 
This has been a standing assumption in the first proofs of localization near internal band edges~\cite{BarbarouxCH-97, KirschSS-98, Stollmann-98}, and is still being adopted in recent literature~\cite{BarbarouxCZ-19}. We shall also rely on this assumption.

The second method to deal with band edge localization has arisen in the 2000s and is based on \emph{Floquet theory}.
It leverages on the periodicity of the background operator $H_0$, thus allowing to refine the perturbation theory of the random operator $H_\omega$ around spectral band edges.
While this has led to significant progress --- and has in particular allowed to drop the assumption of thinness of random variables near their extreme in Anderson localization for band-edge localization in the alloy type model --- the proofs rely on the so-called ``regularity of Floquet edges'', which is still only conjectured to be universal (i.e., counterexamples are supposed to be rare).
Furthermore, Floquet-theory-based proofs of Anderson localization near band edges have been limited to the Alloy-type model, see~\cite{Veselic-02} for a discussion.

The third method, proposed in~\cite{SeelmannT-20} (see also~\cite{RojasMolina-Thesis} where similar ideas are already used at the bottom of the spectrum), builds upon years of progress on quantitative unique continuation, initiated in~\cite{BourgainK-05} and further developed in~\cite{RojasMolinaV-13, Klein-13, NakicTTV-15, NakicTTV-18}. This method uses \emph{quantitative unique continuation estimates} with sufficient information on geometric dependence in combination with large deviation estimates.
The latter has allowed to universally prove Anderson localization near band edges for alloy-type models, but also to show band-edge localization for other random operators where the disorder only affects the potential, such as the random breather model, cf.~\cite[Remark~3.3(4)]{SeelmannT-20}.

\

In this paper, which fits into the above broader landscape, we examine the issue of localization in a simplified model of high contrast random media, with two main objectives in mind: (i) to show that high-contrast random media \emph{do} indeed exhibit localization and (ii) to do so in a way that is accessible to different research communities which are unfamiliar with either multiscale analysis tools (in the sense of \cite{FroehlichS-84}) or with high-contrast stochastic homogenisation. Whilst there has been recent and interesting progress in the understanding of localization in sub-wavelength resonators (see, e.g., \cite{AmmariDH-24, AmmariHR-25}), Anderson localization has not --- to the best of our knowledge --- yet been studied or observed in the setting of the current paper, which builds upon the analysis of the aforementioned~\cite{CherdantsevCV-21,CapoferriCV-23}.  A rigorous understanding of localization in such framework is an exciting prospect, because potentially conducive to applications, and because unravelling the connection between mathematical techniques and the underlying physical mechanism that brings about localization (micro-resonances) may provide fresh insight into the former and lead to further ``purely mathematical'' advances.

\

In order to close the arguments, we shall need to impose here additional assumptions on our stochastic model, see Assumptions~\ref{assumption:bounded_density} and~\ref{assumption:thinness_density}, which we plan to relax in future works, upon developing novel analytical tools required to tackle the mathematical obstacles offered by the general framework. Indeed, there are a number of nontrivial challenges in attempting to undertake the above programme, mostly stemming from the nature of the operator under analysis which differs in significant ways what has traditionally been studied in the context of Anderson localization. Namely:
\begin{enumerate}[(i)]
    \item 
    The randomness only affects the second-order, that is the principal part of a random operator.
    This has been treated but only with linear influence of elementary random variables~\cite{Stollmann-98, FigotinK-97}.
    \item 
    Randomness enters in non-linear way, that is in the radii of inclusions.
    Such a mechanism of randomness has only recently been treated for a random \emph{potential}~\cite{TaeuferV-15}, which already had required a breakthrough in quantitative unique continuation~\cite{NakicTTV-18}.
    \item 
    We no longer work at the bottom of the spectrum but at internal band edges where the perturbation theory becomes more complicated.
    \item 
    In the classical high-contrast case, we want to be able to deal with eventually, the coefficients are non-Lipschitz, so that even the above mentioned recent advancements will not be sufficient.
\end{enumerate}
This justifies the simplifications introduced in this first paper on the subject. 
At a conceptual level, the key ingredient in our proof of Anderson localization are recent scale-free quantitative unique continuation estimates for the gradient with explicit dependence on the geometry, obtained in~\cite{Dicke-21, DickeV-23}, which allow us to prove a Wegner and an initial scale estimate for model at hand.
They also appear to constitute, at present, the main bottleneck for generalisations of our results on localization, e.g. to non-Lipschitz continuous diffusivity functions and more singular random variables.

\

For the sake of completeness, let us finally point out that there have been other important developments in Anderson localization in the last decades which do not directly pertain to this paper, such as the proof of localization in the discrete Anderson-Bernoulli model in $\Z^2$~\cite{DingS-19}, the development of the fractional moment method for localization~\cite{AizenmanENSS-06}, the proof of Poisson statistics of eigenvalues~\cite{DietleinE-20}, as well as the connection between Anderson localization and the torsional rigidity or landscape function~\cite{FilocheM-12}.
Overall, despite many years have passed since Anderson's seminal paper, Anderson localization is still a very active field of cutting-edge mathematical research, which testifies to the highly nontrivial challenges in rigorously capturing this very elusive phenomenon.

\subsection*{Structure of the paper}
\addcontentsline{toc}{subsection}{Structure of the paper}

Our paper is structured as follows.

\

In Section~\ref{sec:Statement of the problem} we introduce the mathematical framework of our work and rigorously formulate the problem we set out to address.
Section~\ref{sec:Main results} summarises the main results of the paper in the form of five theorems, the proofs of which are postponed until later sections.
In Section~\ref{sec:Gaps in the spectrum} we demonstrate that our setting is meaningful, namely, that the spectrum of our family of operators has a band-gap structure.
The remaining three sections are concerned with the proofs of our main results: Section~\ref{Wegner estimate} focusses on the Wegner estimate, Section~\ref{Initial scale estimate under the strong assumption} on the initial scale estimate, whereas in Section~\ref{sec:MSA} we carry out the multiscale analysis and prove our main result --- Anderson localization (in fact, we prove a bit more than that).

For the convenience of the readers, the paper is complemented by a list of notation, which can be found at the end of Section~\ref{sec:Introduction}.

\subsection*{List of notation}
\addcontentsline{toc}{subsection}{List of notation}
%\begin{multicols}{2}
\begin{longtable}{l l}
\hline
\\ [-1em]
\multicolumn{1}{c}{\textbf{Symbol}} & 
  \multicolumn{1}{c}{\textbf{Description}} \\ \\ [-1em]
 \hline \hline \\ [-1em]
% $\overset{2}{\to}$ ($\stackrel{2}\rightharpoonup$) & Strong (resp.~weak) stochastic two-scale convergence\\ \\ [-1em]
$\langle \,\cdot\,,\,\cdot\,\rangle$ (resp.~$\|\,\cdot\,\|$) & Inner product (resp.~norm) in $L^2$\\ \\ [-1em]
% $\langle f \rangle_A$ & Average of $f$ over $A$ \\ \\ [-1em]
% $|A|$ & Lebesgue measure of $A\subset \R^d$ \\ \\ [-1em]
% $\overline{A}$ & Closure of $A\subset \R^d$  \\ \\ [-1em]
$a_\omega^\epsilon$ (resp.~$\hat a_\omega^\epsilon$) & Diffusion coefficients \eqref{a_epsilon} (resp.~\eqref{a hat})
\\ \\ [-1em]
$\cA_\omega^\epsilon$, $\cA_{\omega,L,x}^\epsilon$, $\cA_{\omega,L}^\epsilon$ & Definition~\ref{def:A_epsilon}  \\ \\ [-1em]
$\hat{\cA}_\omega^\epsilon$ & High-contrast operator without boundary layer, defined by \eqref{eq:definition_A_hat_omega_epsilon_weak}  \\ \\ [-1em]
% $\cA^\mathrm{hom}$ & Limiting two-scale operator \\ \\ [-1em]
$\alpha_\epsilon$ & Constant \eqref{eq:alpha_epsilon}
\\ \\ [-1em]
$B_r$ (resp.~$B_r(x)$) & Open ball of radius $r>0$ centred at $0$ (resp.~$x$) \\ \\ [-1em]
$d$ & Space dimension \\ \\ [-1em]
$\mathcal{D}(\cA)$ & Domain of the opeartor $\cA$ \\ \\ [-1em]
$\mathbf{e}$ & Unit sequence \eqref{eq:notation_e_and_sL}\\ \\ [-1em]
$E_0$ & Lower band edge \eqref{eq:notation_e_and_sL}\\ \\ [-1em]
$\E[\,\cdot\,]$  & Expectation \\ \\ [-1em]
$\gamma$  & Parameter $\ge 2$ determining thickness of boundary layer, see~\eqref{L_omega}\\ \\ [-1em]
$\mathrm{I}$ & Identity matrix \\ \\ [-1em]
$\cI_\omega^\epsilon$ & Collection of (spherical) inclusions~\eqref{I_omega} \\ \\ [-1em]
$\cL_\omega^\epsilon$ & Collection of boundary layers of inclusions~\eqref{L_omega} \\ \\ [-1em]
$\Lambda_L$ (resp.~$\Lambda_L(x)$) & Open box $(0,L)^d$ (resp.~open box $x+(0,L)^d$)\\ \\ [-1em]
$\cM_\omega^\epsilon$ & Matrix~\eqref{M_omega} \\ \\ [-1em]
$\mu$ & Probability measure \\ \\ [-1em]
$\eta_\mu$ & Probability density \\ \\ [-1em]
$s_c$ & Shift vector \eqref{eq:notation_e_and_sL}, $c\in\R$ \\ \\ [-1em]
$\sigma(\cA)$ & Spectrum of the operator $\cA$ \\ \\ [-1em]
$\mathrm{tr}[\,\cdot\,]$ & Operator trace \\ \\ [-1em]
$\chi_\Lambda$ & Characteristic function of the set $\Lambda$\\ \\ [-1em]
$\omega$ & Collection of radii of inclusions, an element of the probability space \\ \\ [-1em] 
$[\omega_-,\omega_+]$ & Support of the probability density, subject to the constraint~\eqref{eq:bounds on omega minus and plus} \\ \\ [-1em]
$(\Omega, \mathcal{F}, \PP)$ & Probability space --- see~\eqref{Omega}--\eqref{P} \\ \\ [-1em]
\hline
\end{longtable}

\section{Statement of the problem}
\label{sec:Statement of the problem}

We work in Euclidean space $\R^d$, where $d \geq 2$.
We denote by $B_r(x)$ the open ball of radius $r>0$ centred at $x\in \R^d$ and by $\Lambda_L(x):=x+(0,L)^d$ the open box $(0,L)^d$ of side length $L$ shifted by $x$. When $x=0$ we simply write $B_r$ and $\Lambda_L$ for $B_r(0)$ and $\Lambda_L(0)$, respectively. Furthermore, we denote by $\chi_S$ the characteristic function of the set $S\subset \R^d$.
For later notational convenience, we introduce the following definition.
\begin{definition}
    \label{def:equidistributed}
    Let $G, \delta > 0$.
    A sequence of points $X = \{x_z\}_{z \in \Z^d}$ is \emph{$(G,\delta)$-equidistributed} if 
    \[
    B_\delta(x_z) \subset \Lambda_G(G z)
    \quad
    \text{for all $z \in \Z^d$}.
    \]
\end{definition}
Note that $(G, \delta)$-equidistributed sets are a special class of so-called \emph{thick} or \emph{relatively dense} sets~\cite{EgidiV-18} and seem to be the appropriate class for unique quantitative continuation principles for Schr\"odinger operators~\cite{NakicTTV-18, TaeuferT-18, DickeV-23}.

The symbol $\sigma(\cA)$ denotes the spectrum of an operator $\cA$.
If $\cA$ is self-adjoint and lower semibounded with purely discrete spectrum, we denote by $\lambda_k(\cA)$ its $k$-th eigenvalue, with account of multiplicity.

We shall also adopt the notation 
\begin{equation}
\label{eq:notation_e_and_sL}
    \mathbf{e}:=\{1\}^{\Z^d}, 
    \quad
    \text{and}
    \quad 
    s_c
    :=
    \left(
    \frac c2, \cdots, \frac c2
    \right)
    \in \R^d\,.
\end{equation}
In particular, $x + s_c$ is the centre point of $\Lambda_L(x)$.

Moreover, $L^2(S)$, $H^1(S)$, and $H^1_0(S)$, with $S\subseteq \R^d$, denote the usual Lebesgue and Sobolev spaces. We denote by $\langle \,\cdot\,,\,\cdot\,\rangle$ and $\|\,\cdot\,\|$ the inner product and norm in $L^2$, respectively.

\

For 
\begin{equation}
    \label{eq:bounds on omega minus and plus}
    0<\omega_-<\omega_+<\frac14\,,
\end{equation}
let $\mu$ be a probability measure supported in $[\omega_-,\omega_+]$ with bounded probability density $\eta_\mu$ and such that $\omega_- = \min \operatorname{supp} \mu$. We define a probability space $(\Omega, \Sigma, \PP)$ as
\begin{equation}
\label{Omega}
\Omega:=[0,+\infty)^{\Z^d}\,,
\end{equation}
\begin{equation}
\label{Sigma}
\mathcal{F}:= \bigtimes_{z\in \Z^d} \cB([0,+\infty))\,,
\end{equation}
\begin{equation}
\label{P}
\PP:=\bigotimes_{z\in \Z^d} \eta_\mu\,,
\end{equation}
where $\cB([0,+\infty))$ denotes the Borel $\sigma$-algebra on $[0,+\infty)$.

\

For $\omega=(\omega_z)\in \left(0, \frac{1}{4} \right)^{\Z^d}$, $\epsilon\in (0,1)$, and $\gamma\ge2$, define
\begin{equation}
\label{I_omega}
\cI_\omega^\epsilon:=\bigcup_{z\in \epsilon\Z^d} B_{\epsilon\omega_z}(z+s_\epsilon)\,, \qquad \cI_\omega:=\cI_\omega^1,
\end{equation}
\begin{equation}
\label{L_omega}
\cL_\omega^\epsilon:=\bigcup_{z\in \epsilon\Z^d} B_{\epsilon\omega_z+\nicefrac{\epsilon^\gamma}4}(z+s_\epsilon)\setminus B_{\epsilon\omega_z}(z+s_\epsilon)\,,
\end{equation}
\begin{equation}
\label{M_omega}
\cM_\omega^\epsilon:=\R^d \setminus (\cI_\omega^\epsilon \cup \cL_\omega^\epsilon)\,.
\end{equation}
The set $\cI_\omega^\epsilon$ models a collection of spherical \emph{inclusions} with centres in $\epsilon\Z^d+s_\epsilon$, of  characteristic size of order $\epsilon$ and radii ranging from $0$ to $\epsilon/4$, the set $\cM_\omega^\epsilon$ models a \emph{material matrix}, and $\cL_\omega^\epsilon$ is a boundary \emph{layer} connecting the two. Note that
$B_{\epsilon\omega_z+\frac{\epsilon^\gamma}4}\subset B_\frac\epsilon2$ for all $\epsilon\in(0,1)$ and $\omega\in [\omega_-,\omega_+]^{\Z^d}$, which implies that inclusions and their boundary layers are fully contained in the interior of the boxes $\Lambda_{\epsilon}(z)$, $z\in \epsilon \Z^d$, see~Figure~\ref{fig:I_L_M}.

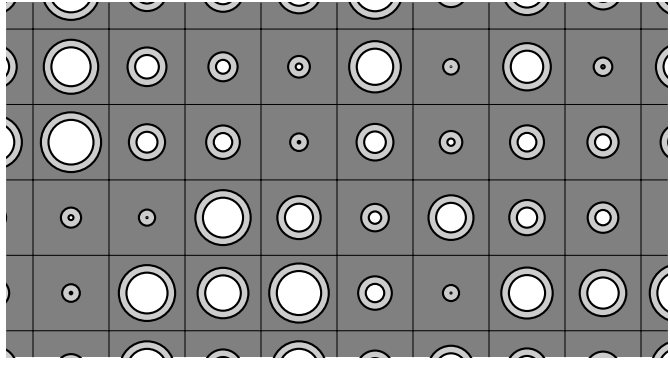
\begin{figure}[ht!]
    \centering

\begin{tikzpicture}
    \begin{scope}
    \clip (-.35,-.35) rectangle (8.35,4.35);
    \fill[black!50] (-.35,-.35) rectangle (8.35,4.35);
    \draw[very thin] (-.35,-.35) grid (8.35,4.35);
    \foreach \x in {-1,...,8}
        {
        \foreach \y in {-1,...,4}
        {
        \pgfmathsetmacro{\w}{rand*0.15 + 0.15};
        \draw[thick, fill = black!20] (\x+.5,\y+.5) circle (\w cm + .1cm); 
        
        \draw[thick, fill = white] (\x+.5,\y+.5) circle (\w cm);

        }
        }
    \end{scope}
    
\end{tikzpicture}

\caption{The sets $\cI_\omega^\epsilon$ (white), $\cL_\omega^\epsilon$ (light grey) and $\cM_\omega^\epsilon$ (dark grey) in two dimensions (left). On the right-hand side, the same sets for a Delone-configuration.}
\label{fig:I_L_M}
\end{figure}

\

The geometric structure of our heterogeneous medium is described by the above sets, whereas its material properties are described by the diffusion coefficients
\begin{equation}
\label{a_epsilon}
a_\omega^\epsilon(x):=
\begin{cases}
1 & x\in \cM_\omega^\epsilon\,,
\\
\epsilon^2 &  x\in \cI_\omega^\epsilon\,,
\\
1-(1-\epsilon^2) \frac{\dist(x,\cM_\omega^\epsilon)}{\nicefrac{\epsilon^\gamma}4} &  x\in \cL_\omega^\epsilon\,.
\end{cases}
\end{equation}

More precisely, the coefficients \eqref{a_epsilon} describe a \emph{high-contrast} composite, in that we have soft inclusions with diffusivity $\epsilon^2$, dispersed in a stiff matrix of diffusivity $1$, with a linear transition between the two phases taking place in a boundary layer of thickness $\epsilon^\gamma/4$. 
The parameter $\epsilon > 0$ is a model parameter which should be thought of as a fixed number, close to zero.
Indeed, it arises from considerations on homogenization and choosing $\epsilon$ sufficiently small will ensure that the operators we consider have gapped spectrum, see Section~\ref{sec:Gaps in the spectrum}.

It is not hard to see that the coefficients satisfy the following properties.
\begin{enumerate}[(a)]
    \item We have 
    \begin{equation}
        \label{eq:coeff property 1}
        \epsilon^2\le a_{\omega}^\epsilon\le 1\,.
    \end{equation}
    \item 
    Let 
    \begin{equation}
        \label{eq:alpha_epsilon}
        \alpha_\epsilon:=\frac{2(1-\epsilon^2)}{\epsilon^{\gamma-1}}.
    \end{equation}
    Then there exists an $(\epsilon,\frac{\epsilon^\gamma}{10})$-equidistributed sequence $X=\{x_z\}_{z\in\epsilon \Z^d}$, see Definition~\eqref{def:equidistributed}, such that 
    \begin{equation}
    \label{eq:coeff_property_2a}
        a_\omega^\epsilon-a_{\omega+s \mathbf{e}}^\epsilon
        \ge \alpha_\epsilon \,s\sum_{z\in \epsilon \Z^d}\chi_{B_{\frac{\epsilon^\gamma}{10} s}(x_z)}
    \end{equation}
     for all $0\le s\le s_0:=\frac14 \epsilon^{\gamma-1}$.
    In particular, we have
    \begin{equation*}
         \label{eq:coeff_property_2b}
         \|a_\omega^\epsilon-a_{\omega+s \mathbf{e}}^\epsilon\|_\infty \ge \alpha_\epsilon\,s\, \qquad \forall s\in[0,s_0].
    \end{equation*}
    \item The random function $a_\omega^\epsilon$ is uniformly Lipschitz continuous with Lipschitz constant
    \begin{equation*}
    \label{Lipischitz constant}
    C_L=C_L(\epsilon)\le \frac{4}{\epsilon^\gamma}
    \end{equation*}
    uniformly in $\omega$.
\end{enumerate}

\begin{definition}
\label{def:A_epsilon}
    We define $\cA_\omega^\epsilon$ to be the self-adjoint linear operator in $L^2(\R^d)$ associated with the bilinear form
    \begin{equation*}
        \label{eq:definition_A_omega_epsilon_weak}
        \int_{\R^d} a_\omega^\epsilon \nabla u \cdot \nabla v\,, \qquad u,v\in H^1(\R^d)\,.
    \end{equation*}
    Similarly, for $L > 0$ and $x \in \R^d$ we define the operator $\cA_{\omega,L,x}^\epsilon$ to be the self-adjoint linear operator in $L^2(\Lambda_L(x))$ associated with the bilinear form
    \begin{equation*}
        \int_{\Lambda_L(x)} a_\omega^\epsilon \nabla u \cdot \nabla v\,, \qquad u,v\in H^1_0(\Lambda_L(x))\,.
    \end{equation*}
In other words, $\cA_{\omega,L,x}^\epsilon$ is the restriction of $\cA_\omega^\epsilon$ to $\Lambda_L(x)$ with Dirichlet boundary conditions.
\end{definition}

Of course, the operator $\cA_\omega^\epsilon$ can be understood as the operator given by $\operatorname{div} (a_\omega^\epsilon \nabla)$.
The goal of this paper is to show that, almost surely and for sufficiently small $\epsilon > 0$, the operator $\cA_\omega^\epsilon$ has gapped spectrum and exhibits Anderson localization in the vicinity of spectral band edges.

\

In order to be able to prove Anderson localization, we will need to impose additional assumptions on the probability measure $\mu$. We collect these assumptions below; they will appear, in different combinations, in the statements of our main results.

\begin{assumption}[Bounded density]
\label{assumption:bounded_density}
    The measure $\mu$ has a bounded density $\eta_\mu$ with respect to the Lebesgue measure.
\end{assumption}

\begin{assumption}[Thinness near the maximum]
    \label{assumption:thinness_density}
    There exists $\kappa > 0$ such that 
    \[
    \mu ([\omega_+ - s, \omega_+])
    \leq
    s^{\kappa}
    \]
    for all sufficiently small $s > 0$.
\end{assumption}

\begin{remark}
Below, in the proof of Theorem~\ref{thm:ISE_strong}, we will require Assumption~\ref{assumption:thinness_density}  to hold for some (implicit) $\kappa > 0$.
However, we emphasise that this is not an empty assumption. 
For instance, if the density $\eta_\mu$ is \emph{exponentially thin} near the maximum of its support then Assumption~\ref{assumption:thinness_density} will hold for all $\kappa > 0$.
\end{remark}

\begin{remark}
    Note that property~\eqref{eq:coeff_property_2a} tells us that the family of differential operators $\{\cA_\omega\}$ is ``antitone in the randomness''. Thus, Assumption~\ref{assumption:thinness_density} will influence the density of eigenvalues near the \emph{minima} of spectral bands. 
    With obvious modifications, \emph{maxima} of spectral bands can be considered as well and the results of this article hold there analogously. 
\end{remark}

\begin{assumption}
    \label{assumption:epsilon_and_omega}
    The parameters $\epsilon > 0$ and $0 < \omega_- < \omega_+ < \frac{1}{4}$ are chosen in such a way that $\mathcal{A}_\omega^\epsilon$ has at least one gap in its spectrum, almost surely.
\end{assumption}

Assumption~\eqref{assumption:epsilon_and_omega} can always be achieved for sufficiently small $\epsilon$ and sufficiently small range of radii $\omega_+ - \omega_-$.
This will be the subject of Section~\ref{sec:Gaps in the spectrum}.

\

We conclude this section by introducing some notation which will be useful further on.

\begin{definition}[Lower and upper band edges] 
	We call $E_0 \in \R$ a \emph{lower (upper) band edge} of $\sigma(\cA_{\omega,L}^\epsilon)$ if $E_0$ is the minimum (maximum) of a connected component of $\sigma(\cA_{\omega,L}^\epsilon)$, $\PP$-almost surely.
\end{definition}
In particular, if $E_0$ is a lower band edge of $\sigma(\cA_{\omega,L}^\epsilon)$, there is $E_- < E_0$ such that $\PP$-almost surely
\begin{equation*}
    \label{eq:ISE_strong_eq1}
         \sigma (\cA_{\omega}^\epsilon) \cap [E_-, E_0) = \emptyset\,.
    \end{equation*}

\section{Main results}
\label{sec:Main results}

The main result of our paper is the following theorem.

\begin{theorem}[Anderson localization]
\label{thm:Anderson_localization}
Suppose that Assumptions~\ref{assumption:bounded_density} and~\ref{assumption:epsilon_and_omega} hold, and let $E_0 > 0$ be a lower band edge of $\sigma(\cA_{\omega}^\epsilon)$.
Then there exist $\kappa>0$ and $E_+ > E_0$ such that, if Assumption~\ref{assumption:thinness_density} with exponent $\kappa$ is satisfied, the spectrum of $\cA_\omega^\epsilon$ in $[E_0, E_+]$ is pure point with exponentially decaying eigenfunctions $\PP$-almost surely.  
\end{theorem}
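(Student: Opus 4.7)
The plan is to prove Anderson localization by the classical multiscale analysis (MSA) framework, in the tradition of Fr\"ohlich--Spencer and von Dreifus--Klein, with the final passage from exponential decay of finite-volume Green's functions to pure point spectrum with exponentially decaying eigenfunctions handled by the by-now-standard spectral arguments. The proof decomposes into (i) a Wegner estimate for $\cA_{\omega,L,x}^\epsilon$, (ii) an initial length-scale estimate at some $L_0$, (iii) the MSA induction propagating resolvent decay to all scales, and (iv) the deterministic deduction of localization from this decay; the first two are the subject of Sections~\ref{Wegner estimate} and~\ref{Initial scale estimate under the strong assumption}, and it remains to sketch how they feed into (iii)--(iv).

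For the Wegner step I would aim at an estimate of the form
\begin{equation*}
\PP\bigl(\,\dist(E, \sigma(\cA_{\omega,L,x}^\epsilon)) \le \delta\,\bigr) \le C_W\, \delta^\alpha L^\beta
\end{equation*}
for $E$ in a neighbourhood of $E_0$ and $L$ in the admissible range. Three ingredients conspire here: the monotonicity~\eqref{eq:coeff_property_2a} ensures that shifting all the radii by $s$ lowers $a_\omega^\epsilon$ by at least $\alpha_\epsilon s$ on the balls $B_{\epsilon^\gamma s/10}(x_z)$ of an $(\epsilon,\epsilon^\gamma/10)$-equidistributed sequence; the scale-free quantitative unique continuation estimate for gradients from~\cite{Dicke-21, DickeV-23} converts this pointwise lower bound into a quantitative lower bound for the spectral shift of $\cA_{\omega,L,x}^\epsilon$ under $\omega \mapsto \omega + s\mathbf{e}$; and Assumption~\ref{assumption:bounded_density} turns the resulting spectral shift into the desired probabilistic bound via a spectral averaging argument. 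For the initial length-scale estimate the key is the thinness hypothesis: since $\{\cA_\omega^\epsilon\}$ is antitone in $\omega$, eigenvalues just above $E_0$ can only arise from configurations in which an atypically large fraction of radii within $\Lambda_{L_0}$ sit close to $\omega_+$; by independence and Assumption~\ref{assumption:thinness_density}, such configurations have probability super-polynomially small in $L_0^d$, which combined with a Temple-type gap estimate furnishes the required seed at a suitably large $L_0$.

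With Wegner and initial scale inputs in place, the MSA induction proceeds essentially as in the classical treatments: at each step one covers a box at scale $L_{k+1}$ by smaller boxes at scale $L_k$, invokes a geometric resolvent identity, and uses Wegner to exclude resonant sub-boxes while the inductive hypothesis propagates exponential decay of the finite-volume resolvent. Iterating produces, for all $E$ in a neighbourhood $[E_0, E_+]$ of $E_0$, exponential decay of $(\cA_{\omega,L_k,x}^\epsilon - E)^{-1}$ with probability tending to one. A Borel--Cantelli argument combined with polynomial boundedness of generalised eigenfunctions then forces pure point spectrum in $[E_0, E_+]$ with exponentially decaying eigenfunctions, $\PP$-almost surely. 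The main obstacle throughout is the Wegner step: the randomness enters non-linearly and, crucially, in the principal part of a divergence-form operator, so the natural spectral-shift identity requires a lower bound on $\int |\nabla u|^2$ (rather than on $\int |u|^2$) on the equidistributed balls. This is exactly what the gradient unique continuation estimates of~\cite{Dicke-21, DickeV-23} provide, and these same estimates appear to be the genuine bottleneck preventing further relaxation of either the hypotheses on $\eta_\mu$ or of the Lipschitz regularity of the coefficients.
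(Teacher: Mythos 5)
Your proposal is correct in its broad architecture and identifies the right ingredients (Wegner via monotonicity plus gradient unique continuation plus spectral averaging; initial scale estimate via thinness plus eigenvalue lifting; MSA; deduction of localization), but it takes a genuinely different route from the paper in the last two steps. The paper does not run a classical von Dreifus--Klein induction followed by a Borel--Cantelli/Shnol-type argument; instead it invokes the Bootstrap Multiscale Analysis of Germinet--Klein \cite{GerminetK-01}, whose output is the stronger conclusion of \emph{strong sub-exponential Hilbert--Schmidt kernel decay} (Theorem~\ref{thm:HS_kernel_decay}), and then obtains Anderson localization (and dynamical localization) as a corollary by citing~\cite[Remark~3.3]{Klein-07}. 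This buys the paper dynamical localization and HS kernel decay for free, whereas your route would give spectral localization directly but not automatically the dynamical statements.

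Two further points you should be aware of. First, invoking any black-box MSA theorem requires verifying a precise list of structural hypotheses; the paper devotes the first half of Section~\ref{sec:MSA} to checking (SLI), (IAD), (NE), (W), (EDI) and (SGEE) for $(\cA_\omega^\epsilon)_\omega$, and the (SGEE) verification in particular is nontrivial and rests on~\cite{KleinKS-02}. A proof that does not at least acknowledge these checks is incomplete: they are not automatic for divergence-form operators with $\epsilon$-dependent ellipticity constants. Second, your phrase ``Temple-type gap estimate'' for the initial-scale seed is not quite what is happening. The mechanism in the paper is the eigenvalue lifting Lemma~\ref{lem:eigenvalue_lifting_reformulated}, which rests on the quantitative unique continuation estimate for gradients of~\cite{DickeV-23}, together with the Combes--Thomas estimate (Proposition~\ref{prop:Combes_thomas}) to convert the resulting spectral gap of $\cA_{\omega,L}^\epsilon$ into resolvent decay --- Temple's inequality plays no role. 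The distinction matters because the lifting lemma, not a Temple bound, is exactly where the Lipschitz regularity of $a_\omega^\epsilon$ and the exponent $\tau$ enter, which in turn dictates how small $\kappa$ must be taken in Assumption~\ref{assumption:thinness_density}.
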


This will be achieved via a sequence of intermediate steps, captured by the following key results.

\

The first two ingredients are a Wegner estimate and an initial scale estimate, which concern spectral properties of our operator restricted to boxes.
The former provides an estimate from above for the expected number of eigenvalues thereof in a given spectral window; the latter ensures that for sufficiently large boxes eigenvalues near the bottom of the spectral bands are rare.

\begin{theorem}[Wegner estimate]
    \label{thm:Wegner}
    Let $\epsilon > 0$. Under Assumption~\ref{assumption:bounded_density}, for all $0 < E_- < E_+ < + \infty$, there exist constants $\delta_{\max}>0$, $L_{\min} \in \epsilon 
    \N$,  $C_1 > 0$ and $C_2 > 1$, depending only on $\epsilon, \gamma, E_-, E_+, \lVert \eta_\mu \rVert_{\infty}$, such that for all $0 < \delta \leq \delta_{\max}$, all $E \in [E_-, E_+]$ such that $[E-3\delta, E+3\delta]\subset [E_-,E_+]$, and all $L \in \epsilon 
    \N$ with $L \geq L_{\min}$, we have
    \begin{equation}
        \label{eq:Wegner}
        \E
        \left[
            \operatorname{Tr}
            \left[
               \chi_{[E-\delta, E+\delta]}
                (
                \cA_{\omega, L}^\epsilon
                )
            \right]
        \right]
        \leq
        C_1
        \delta^{1/C_2}
        \lvert \Lambda_L \rvert^2.
    \end{equation} 
\end{theorem}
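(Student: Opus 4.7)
The strategy is the modern spectral-shift/unique-continuation approach to Wegner estimates for random operators with nonlinearly entering randomness, as developed, e.g., in~\cite{TaeuferV-15, NakicTTV-18, SeelmannT-20}. Three main inputs are needed: (i) monotonicity of the family $\{\cA_{\omega,L}^\epsilon\}_\omega$ in each $\omega_z$, encoded in property~\eqref{eq:coeff_property_2a}; (ii) the scale-free quantitative unique continuation principle (UCP) for the \emph{gradient} of divergence-form operators with Lipschitz coefficients from~\cite{Dicke-21, DickeV-23}, applicable thanks to property~(c); (iii) the bounded-density Assumption~\ref{assumption:bounded_density}.

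The first step is a quantitative eigenvalue lifting estimate: for $s \in (0, s_0]$ and any eigenvalue of $\cA_{\omega, L}^\epsilon$ in $[E_-, E_+]$, the shift $\omega \mapsto \omega + s\mathbf{e}$ is to decrease this eigenvalue by at least $c\, s^{C_2}$. This is achieved by combining an appropriate variational principle (Courant--Fischer for eigenvalue differences, or equivalently the trace-projector identity for the spectral shift function) with property~\eqref{eq:coeff_property_2a}: the coefficient difference $a_\omega^\epsilon - a_{\omega+s\mathbf{e}}^\epsilon$ is bounded below by $\alpha_\epsilon s \cdot \chi_{S_s}$, where $S_s := \bigcup_{z \in \epsilon\Z^d \cap \Lambda_L} B_{\epsilon^\gamma s / 10}(x_z)$ is a union of equidistributed balls. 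The quantitative UCP for the gradient of~\cite{DickeV-23}, applied to the relevant eigenfunctions (or spectrally projected functions) in the window $[E_-, E_+]$, together with the ellipticity bound~\eqref{eq:coeff property 1} (which implies $\int_{\Lambda_L} |\nabla u|^2 \geq E_- \|u\|^2$ for $u$ normalized in the window), yields a lower bound
\[
\int_{S_s} |\nabla u|^2 \geq C_{\mathrm{UCP}}\, s^{C_3}\, E_-\, \|u\|^2,
\]
with $C_{\mathrm{UCP}}, C_3 > 0$ depending only on $\epsilon, \gamma, E_+$. Choosing $s = s(\delta) \asymp \delta^{1/C_2}$ with $C_2 := 1 + C_3$ then guarantees an eigenvalue lift of at least $3\delta$.

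With this $s$, every eigenvalue of $\cA_{\omega, L}^\epsilon$ lying in $[E-\delta, E+\delta]$ is pushed strictly below $E-\delta$ by the uniform shift. Setting $N(F, \omega) := \operatorname{Tr}\chi_{(-\infty, F]}(\cA_{\omega, L}^\epsilon)$, this yields the pointwise inequality
\[
\operatorname{Tr}\chi_{[E-\delta,E+\delta]}(\cA_{\omega, L}^\epsilon) \leq N(E-\delta, \omega + s\mathbf{e}) - N(E-\delta, \omega).
\]
I would then telescope along single-site shifts: enumerating $\epsilon\Z^d \cap \Lambda_L = (z_j)_{j=1}^M$ with $M \asymp |\Lambda_L|/\epsilon^d$, and letting $\omega^{(j)}$ denote the configuration obtained from $\omega$ by shifting its first $j$ components by $s$, each telescoping summand is nonnegative by monotonicity. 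Taking expectation, Fubini in the single variable $\omega_{z_j}$ combined with Assumption~\ref{assumption:bounded_density} and the fundamental theorem of calculus gives
\[
\E\bigl[N(E-\delta, \omega^{(j)}) - N(E-\delta, \omega^{(j-1)})\bigr] \leq \|\eta_\mu\|_\infty\, s\, C_{\mathrm{W}}(\epsilon, E_+)\, |\Lambda_L|,
\]
where $C_{\mathrm{W}}|\Lambda_L|$ is a Weyl-type a priori bound on $N(E_+ + s_0, \cdot)$. Summing over $M$ sites and absorbing $\epsilon$-dependent constants into $C_1$ yields the asserted bound $C_1 \delta^{1/C_2} |\Lambda_L|^2$.

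The main obstacle will be extracting from~\cite{Dicke-21, DickeV-23} a sufficiently explicit $s$-dependence in the UCP for the gradient that is simultaneously (a) scale-free in $L$, (b) compatible with the diverging Lipschitz constant $C_L \sim \epsilon^{-\gamma}$ of $a_\omega^\epsilon$, and (c) valid for spectrally projected functions in $[E_-, E_+]$ rather than single eigenfunctions. Inverting this $s$-dependence is precisely what produces the sub-linear exponent $1/C_2 < 1$ and governs the thresholds $\delta_{\max}$ and $L_{\min}$ appearing in the statement.
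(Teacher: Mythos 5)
Your proposal follows essentially the same route as the paper: an eigenvalue-lifting estimate via the gradient unique-continuation principle of~\cite{Dicke-21, DickeV-23} (the paper's Lemma~\ref{lem:eigenvalue_lifting_reformulated}, which resolves the Lipschitz-compatibility obstacle you flag at the end by squeezing a Lipschitz envelope $g$ between the two characteristic functions in~\eqref{eq:coeff_property_2a} before invoking \cite[Corollary~6.5]{DickeV-23}), followed by a telescoping over the $\sim L^d$ single-site shifts with a Weyl a priori bound and the bounded-density hypothesis to produce the two volume factors and the $\delta^{1/C_2}$. The only cosmetic deviation is that the paper replaces your sharp counting function $N(E-\delta,\cdot)$ by a smooth cutoff $\rho$ (so as to match the integration-by-parts step of \cite[Lemma~4.5]{Dicke-21}), but your Fubini/change-of-variables argument with the nondecreasing $N$ yields the same single-site bound $\lesssim \lVert\eta_\mu\rVert_\infty\, s\, L^d$, so the two routes are equivalent.
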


\begin{theorem}[Initial scale estimate (ISE)]
    \label{thm:ISE_strong}
Suppose  Assumption~\ref{assumption:epsilon_and_omega} holds, and let $E_0 > 0$ be a lower band edge of $\sigma(\cA_{\omega,L}^\epsilon)$. For all $C_3$, $C_4 > 0$ there exists $\kappa > 0$ such that, if Assumption~\ref{assumption:thinness_density} with exponent $\kappa$ is satisfied, then there exists $L_{\min} \in \epsilon\N$ such that for all $L \in \epsilon\N$ with $L \geq L_{\min}$ we have
    \begin{equation}
    \label{eq:ISE_strong_eq2}
    \PP
    \left[
    \sigma(\cA_{\omega, L}^\epsilon)
        \cap
        [E_0, E_0 + L^{- C_3} ]\neq \emptyset
    \right]
    \leq
    L^{- C_4}.
     \end{equation}
\end{theorem}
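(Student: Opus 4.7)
The plan is to combine three ingredients: (i) the antitonicity of $\cA^\epsilon_\omega$ in $\omega$ (property~\eqref{eq:coeff_property_2a}); (ii) the thinness of $\mu$ near $\omega_+$ (Assumption~\ref{assumption:thinness_density}); and (iii) the scale-free quantitative unique continuation principle (UCP) for gradients of eigenfunctions of divergence-form operators from~\cite{Dicke-21, DickeV-23}. By monotonicity together with ergodicity, the band edge $E_0$ coincides with a band edge of the periodic operator $\cA^\epsilon_{\omega_+\mathbf{e}}$, which will serve as the deterministic reference configuration.

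For the deterministic step, assume the event in~\eqref{eq:ISE_strong_eq2} occurs and let $\psi$ be a normalised eigenfunction of $\cA^\epsilon_{\omega,L}$ with eigenvalue $\lambda\in [E_0, E_0+L^{-C_3}]$. Fix a threshold $s_*=s_*(L)$ and block size $G'=G'(L)$ to be chosen later, and call $z\in\epsilon\Z^d$ \emph{good} if $\omega_z\leq \omega_+-s_*$. Since each inclusion is supported in its own cell $\Lambda_\epsilon(z)$, a site-wise version of~\eqref{eq:coeff_property_2a} gives
\begin{equation*}
\lambda \;=\; \int a^\epsilon_\omega|\nabla\psi|^2 \;\geq\; \int a^\epsilon_{\omega_+\mathbf{e}}|\nabla\psi|^2 \;+\; \alpha_\epsilon s_* \sum_{z\text{ good}} \int_{B_{\delta_*}(x_z)} |\nabla\psi|^2, \qquad \delta_*:=\epsilon^\gamma s_*/10.
\end{equation*}
On the event that the collection $\{x_z : z\text{ good}\}$ contains an $(G',\delta_*)$-equidistributed subset of $\Lambda_L$, the Dicke--Veselic UCP applied to the gradient of $\psi$ (valid since $\psi$ is an eigenfunction of a Lipschitz divergence-form operator and $\lambda$ is bounded) gives
\begin{equation*}
\sum_{z\text{ good}} \int_{B_{\delta_*}(x_z)} |\nabla\psi|^2 \;\geq\; \gamma(G',\delta_*,E_0+1,\epsilon)\|\psi\|^2,
\end{equation*}
with an explicit constant $\gamma>0$ that is typically stretched-exponentially small in $G'$. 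Combined with the lower bound $\int a^\epsilon_{\omega_+\mathbf{e}}|\nabla\psi|^2 \geq E_0 \|\psi\|^2$ (which follows from the spectral gap of $\cA^\epsilon_{\omega_+\mathbf{e}}$ below $E_0$, via a spectral-projection argument and Combes--Thomas estimates inside the gap to control any component of $\psi$ possibly supported below $E_-$), this contradicts $\lambda\leq E_0+L^{-C_3}$ as soon as $\alpha_\epsilon s_* \gamma \geq L^{-C_3}$.

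For the probabilistic step, by independence of the $(\omega_z)$ and a union bound over the $\sim (L/G')^d$ sub-cubes of side $G'$ in $\Lambda_L$,
\begin{equation*}
\PP[\text{good sites not equidistributed}] \;\leq\; (L/G')^d \,\mu([\omega_+-s_*,\omega_+])^{(G'/\epsilon)^d} \;\leq\; (L/G')^d\, s_*^{\kappa (G'/\epsilon)^d},
\end{equation*}
using Assumption~\ref{assumption:thinness_density}. It remains to balance the two constraints---$\alpha_\epsilon s_*\gamma(G',\delta_*,E_0+1,\epsilon)\geq L^{-C_3}$ and $(L/G')^d s_*^{\kappa (G'/\epsilon)^d}\leq L^{-C_4}$---by choosing $G'(L)$ growing polylogarithmically in $L$ and $s_*(L)$ a suitable negative power of $L$. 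This is feasible precisely when $\kappa$ exceeds a threshold determined by $C_3$, $C_4$ and the explicit form of $\gamma$, fixing the required value of $\kappa$ in Assumption~\ref{assumption:thinness_density}.

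I expect the main obstacle to be the deterministic lower bound $\int a^\epsilon_{\omega_+\mathbf{e}}|\nabla\psi|^2 \geq E_0\|\psi\|^2$: because $E_0$ is a \emph{higher} band edge, min-max alone does not yield this, and one must carefully handle the possibility that $\psi$ has a small component in the spectral subspace of $\cA^\epsilon_{\omega_+\mathbf{e},L}$ below $E_-$, which requires Combes--Thomas-type estimates inside the gap together with a careful choice of block size. A second technical difficulty is tracking the precise dependence of the UCP constant $\gamma$ on $G'$ and $\delta_*$ with enough sharpness to close the optimisation with a finite $\kappa$; this is the step that limits the applicability of the present method to Lipschitz coefficients and motivates the simplifications of the current framework.
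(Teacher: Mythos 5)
Your proposal is genuinely different from the paper's proof, and it is instructive to see why the paper does not take your route. You sketch a Seelmann--T\"aufer style argument: a large-deviations bound that good sites form a $(G',\delta_*)$-equidistributed set, combined with a direct application of the Dicke--Vesel\'i\'c UCP at scale $G'$ to the eigenfunction of the \emph{random} operator, with $G'(L)$ growing polylogarithmically in $L$. The paper instead follows the older Barbaroux--Combes--Hislop / Kirsch--Stollmann--Stolz strategy: it considers the event $A_{s,L}$ that \emph{all} radii in $\Lambda_L$ satisfy $\omega_z < \omega_+-s$ (a union bound gives $\PP[A_{s,L}^c]\leq (L/\epsilon)^d s^\kappa$), and on this event compares the eigenvalue index $\lambda_k$ (the first one $\geq E_0$, which is the \emph{same} index $k$ for every $\omega$ by Remark~\ref{rem:actually_equality}) across deterministic operators via $\lambda_k(\cA^\epsilon_{\omega,L})\geq \lambda_k(\cA^\epsilon_{(\omega_+-s)\mathbf{e},L})\geq \lambda_k(\cA^\epsilon_{\omega_+\mathbf{e},L})+s^\tau\geq E_0+s^\tau$, where the middle inequality is the eigenvalue-lifting Lemma~\ref{lem:eigenvalue_lifting_reformulated}, itself applied at the \emph{fixed} scale $\epsilon$.

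There are two genuine gaps in your proposal. First, the deterministic lower bound $\int a^\epsilon_{\omega_+\mathbf{e}}|\nabla\psi|^2 \geq E_0\|\psi\|^2$ does not follow from your argument: $\psi$ is an eigenfunction of the \emph{random} operator $\cA^\epsilon_{\omega,L}$, not of the periodic reference operator, and you would need $\psi$ to be nearly orthogonal to the spectral subspace of $\cA^\epsilon_{\omega_+\mathbf{e},L}$ below the gap. You flag this yourself, but the suggested fix via ``Combes--Thomas inside the gap'' is not obviously adequate --- Combes--Thomas controls resolvent decay, not the size of a spectral projection of a \emph{different} operator applied to $\psi$ --- and no concrete argument is given. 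The paper sidesteps this entirely by arguing at the level of eigenvalue \emph{indices} and the inclusion $\sigma(\cA^\epsilon_{\omega,L})\subset\sigma(\cA^\epsilon_\omega)$, which fixes $k$ uniformly in $\omega$ and makes the monotone comparison purely min-max.

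Second, your balancing step requires knowing how the UCP constant $\gamma(G',\delta_*,\cdot)$ degrades as $G'\to\infty$. The paper explicitly states at the start of Section~\ref{Initial scale estimate under the strong assumption} that the dependence of the constant $N$ in the Dicke--Vesel\'i\'c UCP on $G\vartheta_{\mathrm{Lip}}$ is \emph{not} known to be explicit or polynomial, and that this is precisely the obstruction to running the Seelmann--T\"aufer argument in this setting --- which is why Assumption~\ref{assumption:thinness_density} is imposed instead. By applying the UCP only at the fixed scale $\epsilon$ (inside Lemma~\ref{lem:eigenvalue_lifting_reformulated}), the paper needs only the dependence of $\gamma$ on the ball radius $\delta_*\sim s$, which \emph{is} explicit (a power $s^\tau$), and this closes the optimisation with the finite threshold $\kappa>\tau(d+1+C_4)/C_3$. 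If you were to take $G'=\epsilon$ fixed (so that ``good sites equidistributed'' reduces to ``all sites good''), and replace the quadratic-form comparison with the eigenvalue-index comparison, your argument would collapse to the paper's; as written, however, it relies on two unavailable ingredients.
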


\begin{remark}
    Note that, on the one hand, the Wegner estimate does not require any additional constraints on the model parameters $\epsilon$ and $\omega_\pm$ (Assumption~\ref{assumption:epsilon_and_omega}), whereas Theorem~\ref{thm:ISE_strong} does, because to state the latter one needs to know \emph{a priori} that the spectrum of the operator has gaps. On the other hand, Theorem~\ref{thm:Wegner} requires the probability density $\eta_\mu$ to be bounded (Assumption~\ref{assumption:bounded_density}), whereas the ISE requires thinness of the probability measure $\mu$ near the maximum $\omega_+$ of its support (Assumption~\ref{assumption:thinness_density}). All subsequent results, as well as Theorem~\ref{thm:Anderson_localization}, rely on both of the above theorems, and therefore they will require the union of all assumptions to hold.
\end{remark}

The two results above will represent the starting point of a multiscale analysis argument, eventually leading to the proof of localization. In fact, we shall prove the strongest form of localization, namely \emph{strong sub-exponential Hilbert-Schmidt-kernel decay}~\cite[Definition~3.1]{Klein-07}:

\begin{theorem}[Strong sub-exponential Hilbert-Schmidt-kernel decay]
	\label{thm:HS_kernel_decay}
	Suppose that Assumptions~\ref{assumption:bounded_density}, \ref{assumption:thinness_density} and~\ref{assumption:epsilon_and_omega} hold, and let $E_0 > 0$ be a lower band edge of $\sigma(\cA_{\omega,L}^\epsilon)$.
	Then there exists $E_+ > E_0$ such that for all $\zeta \in (0,1)$ we have
	\begin{equation*}
		\label{eq:HS_dynamical_localization}
		\E
		\left[
			\sup_{\lVert f \rVert_\infty \leq 1}
			\left\lVert
				\chi_{\Lambda_1(x)} 
				\chi_{[E_0, E_+]}(\cA_\omega^\epsilon) 
				f(\cA_\omega^\epsilon) 
				\chi_{\Lambda_1(y)}
			\right\rVert_{\mathrm{HS}}^2
		\right]
		\leq
		C_\zeta\,
		\mathrm{e}^{- \lvert x - y \rvert^\zeta}
	\end{equation*}
	for some constant $C_\zeta>0$ and all $x,y \in \epsilon \Z^d$, where the supremum is taken over all Borel-measurable functions. Here $\lVert \,\cdot\, \rVert_{\mathrm{HS}}$ denotes the Hilbert-Schmidt norm.
\end{theorem}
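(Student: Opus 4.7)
The plan is to deduce Theorem~\ref{thm:HS_kernel_decay} from the Wegner estimate (Theorem~\ref{thm:Wegner}) and the initial scale estimate (Theorem~\ref{thm:ISE_strong}) by running a bootstrap multiscale analysis in the form cast by Klein~\cite{Klein-07}, whose very output is strong sub-exponential Hilbert--Schmidt kernel decay in a non-empty interval attached to the band edge $E_0$. The bootstrap machinery takes as input a polynomial ``starting estimate'' at some initial scale together with a single-energy Wegner-type bound, and propagates them to all scales, yielding the conclusion (SUDEC) of Klein's formalism --- precisely the statement we want to prove.

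Before invoking the bootstrap, I would verify its structural hypotheses for the family $\{\cA_{\omega,L,x}^\epsilon\}$. Measurability of the restricted operators in $\omega$, ergodicity of $\cA_\omega^\epsilon$ under $\epsilon\Z^d$-translations, and independence-at-a-distance of local coefficients on disjoint boxes $\Lambda_\epsilon(z_1)$, $\Lambda_\epsilon(z_2)$ with $z_1\neq z_2$ all follow from the product structure~\eqref{P} of $\PP$ and the fact that, by construction, $a_\omega^\epsilon|_{\Lambda_\epsilon(z)}$ depends only on $\omega_z$, since inclusions and their boundary layers are strictly contained in these boxes (cf.~Figure~\ref{fig:I_L_M}). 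Essential self-adjointness, semiboundedness and the generalized eigenfunction expansion are standard for the uniformly elliptic divergence-form operator $\cA_\omega^\epsilon$. The geometric resolvent inequality (SLI) and the eigenfunction decay inequality (EDI) follow from a classical smooth-cutoff argument: using a cutoff $\chi$ supported in an annulus between nested boxes together with the lower bound $a_\omega^\epsilon\ge \epsilon^2$ from~\eqref{eq:coeff property 1}, resolvent comparisons on nested boxes reduce to a commutator of order one controlled by $\|\nabla\chi\|_\infty$, analogously to~\cite{Stollmann-98, CombesHT-99}. Finally, the number-of-eigenstates bound is obtained by comparing $\cA_{\omega,L}^\epsilon$ with the Dirichlet Laplacian via~\eqref{eq:coeff property 1} and Weyl asymptotics, uniformly in $\omega$.

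With these hypotheses in place, I would apply the bootstrap at $E_0$. The ISE~\eqref{eq:ISE_strong_eq2} supplies the starting estimate at scale $L\ge L_{\min}$: the probability that $\sigma(\cA_{\omega,L}^\epsilon)$ meets the window $[E_0, E_0+L^{-C_3}]$ is at most $L^{-C_4}$, with $C_3, C_4$ prescribable by choosing $\kappa$ in Assumption~\ref{assumption:thinness_density} sufficiently large. The Wegner estimate~\eqref{eq:Wegner} furnishes the required single-energy H\"older bound with polynomial volume dependence. Selecting $C_3, C_4$ sufficiently large in terms of the Wegner H\"older exponent $1/C_2$, the spatial dimension $d$, and the target decay rate $\zeta$, the bootstrap delivers Klein's (SUDEC) in some interval $[E_0, E_+]$ with $E_+>E_0$, which is exactly the content of Theorem~\ref{thm:HS_kernel_decay}. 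The main obstacle, and the one requiring care, is quantitative bookkeeping: the H\"older bound $\delta^{1/C_2}$ in~\eqref{eq:Wegner} is weaker than the linear Wegner bound traditionally assumed, and one has to trace how $C_2$ (whose value is dictated by the quantitative unique continuation machinery of~\cite{Dicke-21, DickeV-23}) enters the bootstrap's admissible exponents, so that the ISE can be strengthened correspondingly via Assumption~\ref{assumption:thinness_density}. Once this is done, the remaining steps are standard consequences of~\cite{Klein-07}.
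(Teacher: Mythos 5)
The proposal follows the same overall route as the paper: verify the structural hypotheses (SLI, IAD, NE, W, EDI, SGEE) for the family $\{\cA_{\omega,L,x}^\epsilon\}$, then feed the Wegner estimate and the initial scale estimate into a bootstrap multiscale analysis in the spirit of Germinet--Klein and Klein. However, there is a genuine gap in how you pass from the ISE to the input required by the bootstrap. You assert that Theorem~\ref{thm:ISE_strong} ``supplies the starting estimate at scale $L\ge L_{\min}$'', but the ISE is a probabilistic statement about the \emph{spectrum} of $\cA_{\omega,L}^\epsilon$ avoiding a window $[E_0, E_0+L^{-C_3}]$, whereas the bootstrap MSA requires a lower bound on the probability that the \emph{resolvent norm} $\lVert\chi_{\Gamma_{L_0}}(\cA_{\omega,L_0}^\epsilon-E)^{-1}\chi_{\Lambda_{L_0/3}}\rVert$ is at most $L_0^{-\theta}$. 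Converting a spectral gap of size $\gtrsim L^{-1/2}$ into polynomial decay of the boundary-to-core resolvent is a distinct step: it requires a Combes--Thomas estimate, which turns distance to the spectrum into exponential decay of the off-diagonal resolvent kernel. This is exactly what the paper carries out in Theorem~\ref{thm:suitability_holds} via Proposition~\ref{prop:Combes_thomas}, and without it your argument does not deliver the bootstrap's input condition.

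A secondary, more minor point: you suggest that the H\"older exponent $1/C_2$ in the Wegner estimate must be traced into the bootstrap's admissible exponents and compensated for by strengthening the ISE via Assumption~\ref{assumption:thinness_density}. In the Germinet--Klein framework with the Wegner hypothesis relaxed to a sub-linear H\"older bound, the exponent only affects the finite starting scale $\mathcal{L}_0(E)$, not the conclusion or the admissible $(C_3,C_4)$; the ISE and Wegner exponents need not be coupled. Apart from these two issues, your verification of (SLI), (EDI), (NE), (IAD), measurability, and the generalised eigenfunction expansion mirrors the paper's Lemma~\ref{lem:multiscale_assumptions} and is in order.
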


More precisely, Theorems~\ref{thm:ISE_strong} and~\ref{thm:Wegner} imply Theorem~\ref{thm:HS_kernel_decay} via a Boostrap multiscale analysis in the spirit of~\cite{GerminetK-01},
which will be the subject of Section~\ref{sec:MSA}.
It is then well-known that strong sub-exponential Hilbert-Schmidt-kernel decay implies weaker notions of localization --- in particular, \emph{(strong) dynamical localization} (formulated below), as well as spectral localization with exponentially decaying eigenfunctions, commonly referred to as~\emph{Anderson localization} (the claim of our main Theorem~\ref{thm:Anderson_localization}) --- so that Theorems~\ref{thm:Anderson_localization} and~\ref{thm:Dynamical_localization} now follow from Theorem~\ref{thm:HS_kernel_decay} as a corollary, see~\cite[Remark~3.3]{Klein-07}.

\begin{theorem}[Strong dynamical localization]
    \label{thm:Dynamical_localization}
    Under the assumptions of Theorem~\ref{thm:HS_kernel_decay} we have
		\begin{equation*}
		\E
		\left[
			\sup_{\lVert f \rVert_\infty \leq 1}
			\left\lVert
				\lvert X \rvert^{n/2}
				\chi_{[E_0, E_+]}(\cA_\omega^\epsilon) 
				\mathrm{e}^{- i t \cA_\omega^\epsilon}
                \psi
			\right\rVert^2
		\right]
        <
        \infty
		\end{equation*}
        for all $n \in \N$ and all $\psi \in L^2(\R^d)$ with compact support,
        where $\lvert X \rvert$ denotes the operator of multiplication by $x$.
\end{theorem}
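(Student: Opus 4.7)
The plan is to derive Theorem~\ref{thm:Dynamical_localization} directly from Theorem~\ref{thm:HS_kernel_decay} via a purely deterministic reduction, in the spirit of~\cite[Remark~3.3]{Klein-07}: once sub-exponential decay of Hilbert--Schmidt norms of localized pieces of $f(\cA_\omega^\epsilon)$ is available for every bounded $f$, polynomially weighted moments of the time evolution restricted to $[E_0, E_+]$ are automatically finite for every compactly supported initial state. No new probabilistic input will be needed.

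The first step is a geometric decomposition. Since $\psi$ has compact support, I would fix a finite set $Y \subset \epsilon \Z^d$ with $\operatorname{supp}\psi \subset \bigcup_{y \in Y} \Lambda_1(y)$, and insert the partition of unity $\mathbf{1} = \sum_{x \in \epsilon \Z^d} \chi_{\Lambda_1(x)}$ on the image side. Using the elementary bound $\lVert |X|^{n/2} \chi_{\Lambda_1(x)} \rVert \leq c_n (1 + |x|)^{n/2}$ and dominating the operator norm by the Hilbert--Schmidt norm, one obtains the pointwise-in-$\omega$ estimate
\begin{equation*}
\bigl\lVert |X|^{n/2} \chi_{[E_0, E_+]}(\cA_\omega^\epsilon) f(\cA_\omega^\epsilon) \psi \bigr\rVert^2
\leq C(\psi, n) \sum_{x \in \epsilon\Z^d} \sum_{y \in Y} (1+|x|)^n \bigl\lVert \chi_{\Lambda_1(x)} \chi_{[E_0, E_+]}(\cA_\omega^\epsilon) f(\cA_\omega^\epsilon) \chi_{\Lambda_1(y)} \bigr\rVert_{\mathrm{HS}}^2.
\end{equation*}
Taking the supremum over $\lVert f \rVert_\infty \leq 1$ termwise and then expectation, I would apply Theorem~\ref{thm:HS_kernel_decay} with a fixed $\zeta \in (0,1)$ to each summand, yielding
\begin{equation*}
\E\Bigl[\sup_{\lVert f \rVert_\infty \leq 1} \bigl\lVert |X|^{n/2} \chi_{[E_0, E_+]}(\cA_\omega^\epsilon) f(\cA_\omega^\epsilon) \psi \bigr\rVert^2\Bigr]
\leq C(\psi, n)\, C_\zeta \sum_{x \in \epsilon\Z^d} \sum_{y \in Y} (1+|x|)^n \mathrm{e}^{-|x-y|^\zeta}.
\end{equation*}
The triangle inequality $|x| \leq \max_{y \in Y} |y| + |x-y|$ together with finiteness of $Y$ ensures that the sub-exponential weight dominates the polynomial factor, so the double sum converges.

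The only point requiring a moment's care, and arguably the sole non-routine step, is the interchange of the $f$-supremum with the infinite sum and the expectation. This will not be a genuine obstacle: because the summands are non-negative, one has $\sup_f \sum_x (\cdot) \leq \sum_x \sup_f (\cdot)$ deterministically, and Tonelli then lets the expectation be passed inside the series. Crucially, Theorem~\ref{thm:HS_kernel_decay} is formulated as a bound on $\E[\sup_f (\cdot)]$ rather than on $\sup_f \E[(\cdot)]$, so each individual summand is already controlled. The time evolution $\mathrm{e}^{-it\cA_\omega^\epsilon}$ is simply a particular realisation of $f$ with $\lVert f \rVert_\infty = 1$, so no $t$-dependent argument is needed and the bound is uniform in $t \in \R$.
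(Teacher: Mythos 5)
Your proof is correct and is exactly the standard reduction (spelled out) that the paper delegates to~\cite[Remark~3.3]{Klein-07}: decompose the source by compact support, decompose the image by unit boxes, bound the polynomial weight on each box, replace operator norms by Hilbert--Schmidt norms, and then sum the sub-exponential tail from Theorem~\ref{thm:HS_kernel_decay} against the polynomial factor. One small imprecision: since $\epsilon<1$, the boxes $\{\Lambda_1(x)\}_{x\in\epsilon\Z^d}$ overlap, so $\sum_{x\in\epsilon\Z^d}\chi_{\Lambda_1(x)}$ is not $\mathbf{1}$ but a function bounded between $1$ and $O(\epsilon^{-d})$; this only changes equalities to one-sided inequalities in the direction you need (a covering, not a tiling), at the cost of a finite $\epsilon$-dependent multiplicative constant, so the conclusion is unaffected.
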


%\begin{proof}
%    This is essentially a consequence of the multi-scale analysis as in~\cite{sto98}, together with the initial scale estimate~\ref{thm:ISE_strong} and the Wegner estimate~\ref{thm:Wegner}.
%\end{proof}

\section{Gaps in the spectrum of {$\cA_\omega^\epsilon$}}
\label{sec:Gaps in the spectrum}

Before addressing the proof of our main results, let us show that the problem we are studying is meaningful, in that the spectrum of the operator $\cA_\omega^\epsilon$ has gaps almost surely, provided the parameter $\epsilon$ is sufficiently small. With this knowledge at hand, in the remainder of the paper we will assume to have fixed an $\epsilon$ such that the latter property holds (see Assumption~\ref{assumption:epsilon_and_omega}).

\

Before discussing the technical details, let us outline our strategy in plain English. 
The main issue we need to address is the presence the boundary layer $\cL_{\omega}^\epsilon$, which was required in order to make the coefficients of the operator Lipschitz continuous and apply quantitative unique continuation results. 
The existence of gaps for sufficiently small $\epsilon$ has been proven in recent works on homogenization, cf.~\cite{CherdantsevCV-21} in the absence of a boundary layer.
However, upon observing that the operator $\cA_\omega^\epsilon$ can still be lower and upper bounded by operators with a sharp transition between the two phases,
one can use continuity and monotonicity of the spectra with respect of the size of the inclusions in order to see that also in the presence of a boundary layer, gaps remain.
To exploit this continuity, we shall have to resort to finite-volume approximation of the infinite volume operator.

\

In more technical terms, in this section we prove the following result.

\begin{theorem}[Existence of gaps]
\label{theorem A has gaps}
There exists $\epsilon_0>0$ such that for all $0<\epsilon<\epsilon_0$ the operator $\cA_\omega^\epsilon$ has gapped spectrum. Namely, there are $0<a<b$ such that 
\begin{equation*}
    (a,b)\cap \sigma(\cA_\omega^\epsilon)=\emptyset \qquad \forall \epsilon\in (0,\epsilon_0)
\end{equation*}
almost surely.
\end{theorem}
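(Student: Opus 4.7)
The plan is to leverage the sharp-transition band-gap result of~\cite{CherdantsevCV-21} for the high-contrast operator $\hat{\cA}_\omega^\epsilon$ without boundary layer, and to transfer the gap to $\cA_\omega^\epsilon$ by showing that the Lipschitz regularisation in the boundary layer~$\cL_\omega^\epsilon$, being confined to a set of relative thickness~$\epsilon^{\gamma-1}$, cannot close the gap when $\epsilon$ is small enough.

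First I would introduce two sharp-transition sandwich operators. Let $\hat{\cA}_\omega^{\epsilon, \operatorname{in}}$ denote the high-contrast operator with sharp transition across spheres of radii $\epsilon\omega_z$, and $\hat{\cA}_\omega^{\epsilon, \operatorname{out}}$ the one with sharp transition across spheres of radii $\epsilon\omega_z + \epsilon^\gamma/4$. Checking the three zones $\cM_\omega^\epsilon$, $\cI_\omega^\epsilon$, $\cL_\omega^\epsilon$ separately, one sees that $\hat a_\omega^{\epsilon, \operatorname{out}} \le a_\omega^\epsilon \le \hat a_\omega^{\epsilon, \operatorname{in}}$ pointwise: the only non-trivial zone is $\cL_\omega^\epsilon$, where the linear interpolation $a_\omega^\epsilon$ lies in $[\epsilon^2, 1]$. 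The corresponding quadratic forms inherit this ordering and, on any box $\Lambda_L$ with Dirichlet boundary conditions, the min--max principle yields
\[
\lambda_k\bigl(\hat{\cA}_{\omega,L}^{\epsilon, \operatorname{out}}\bigr)
\le
\lambda_k\bigl(\cA_{\omega,L}^\epsilon\bigr)
\le
\lambda_k\bigl(\hat{\cA}_{\omega,L}^{\epsilon, \operatorname{in}}\bigr).
\]

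Second, I would consider the whole one-parameter family of sharp-transition operators $\hat{\cA}_\omega^{\epsilon,t}$ with radii $\epsilon(\omega_z + t\epsilon^{\gamma-1}/4)$, $t \in [0,1]$, which continuously interpolates between $\hat{\cA}_\omega^{\epsilon, \operatorname{in}}$ and $\hat{\cA}_\omega^{\epsilon, \operatorname{out}}$. For sufficiently small $\epsilon$ all these radii lie in the admissible interval $(0, 1/4)$, and the band-gap theorem of~\cite{CherdantsevCV-21} applies to every $t$, producing, $\PP$-almost surely, a gap $(a(t), b(t))$ in the spectrum of~$\hat{\cA}_\omega^{\epsilon,t}$. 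Since the underlying two-scale limit depends continuously on the distribution of radii and the family is confined to a shift of order $\epsilon^{\gamma-1}$ of the original distribution, these gaps can be arranged to contain a common deterministic sub-interval $(a_\star, b_\star)$ uniformly in $t \in [0,1]$ once $\epsilon$ is small enough.

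Third, by form-continuity of $\hat a_\omega^{\epsilon,t}$ in $t$ each Dirichlet eigenvalue $\lambda_k(\hat{\cA}_{\omega,L}^{\epsilon,t})$ varies continuously with $t$; since no such eigenvalue can ever enter the common gap $(a_\star, b_\star)$, the counting functions of $\hat{\cA}_{\omega,L}^{\epsilon, \operatorname{in}}$ and $\hat{\cA}_{\omega,L}^{\epsilon, \operatorname{out}}$ below $a_\star$ must coincide. Combined with the sandwich from the first step, this forces the eigenvalues of $\cA_{\omega,L}^\epsilon$ to avoid $(a_\star, b_\star)$ as well. The infinite-volume claim then follows by letting $L \to \infty$ via standard Dirichlet--Neumann bracketing, which preserves spectral gaps and ergodicity arguments (cf.~\cite{CherdantsevCV-21}), giving $(a_\star, b_\star) \cap \sigma(\cA_\omega^\epsilon) = \emptyset$ almost surely.

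The main obstacle, in my view, lies in the uniformity of the second step: one needs not merely existence of gaps along the continuous family $\hat{\cA}_\omega^{\epsilon,t}$, but continuous and quantitative control of their endpoints as functions of the radius distribution. This should be accessible through the variational characterisation of the two-scale band edges, but requires careful bookkeeping of how the stochastic two-scale limit reacts to a small shift of the distribution of radii.
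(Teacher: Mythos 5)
Your proposal takes essentially the same route as the paper: sandwich $a_\omega^\epsilon$ pointwise between two sharp-transition coefficients (your ``in''/``out'' pair is the paper's $\hat a_\omega^\epsilon$ and $\hat a_{\omega+\delta\mathbf{e}}^\epsilon$), interpolate continuously along the one-parameter family, invoke the gap result of~\cite{CherdantsevCV-21} for the sharp operators, and transfer the gap to $\cA_\omega^\epsilon$ via monotonicity and continuity of box eigenvalues together with a finite-volume reduction (the paper uses the spectral inclusion $\sigma(\cA_\omega^\epsilon)\subseteq\overline{\bigcup_n\sigma(\cA_{\omega,n}^\epsilon)}$ from Stollmann rather than Dirichlet--Neumann bracketing, but the role is the same). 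The uniformity concern you flag at the end is a genuine soft spot, but the paper's own proof is equally terse there, appealing to~\cite{CherdantsevCV-21} only at the endpoints of the interpolation and to continuity of box eigenvalues to cover the intermediate parameters, so your blind reconstruction matches the intended level of detail.
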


To this end, let
\begin{equation}
\label{a hat}
\hat a_\omega^\epsilon:=
\begin{cases}
1 & x\in \cM_\omega^\epsilon\cup \cL_\omega^\epsilon\,,
\\
\epsilon^2 &  x\in \cI_\omega^\epsilon\,,
\end{cases}
\end{equation}
and define $\hat{\cA}_\omega^\epsilon$ to be the self-adjoint linear operator in $L^2(\R^d)$ associated with the bilinear form
  \begin{equation}
        \label{eq:definition_A_hat_omega_epsilon_weak}
        \int_{\R^d} \hat a_\omega^\epsilon \nabla u \cdot \nabla v\,, \qquad u,v\in H^1(\R^d)\,.
    \end{equation}
For $L>0$, we denote by $\hat{\cA}_{\omega,L}^\epsilon$ the restriction of $\hat{\cA}_\omega^\epsilon$ to $\Lambda_L$ with Dirichlet boundary conditions.

It was shown in \cite[\S~5.6.4]{CherdantsevCV-21} that $\sigma(\hat{\cA}_\omega^\epsilon)$ has band-gap spectrum almost surely for sufficiently small $\epsilon$ and range of radii $\omega_+-\omega_-$. 
This ensures in particular that Assumption~\ref{assumption:epsilon_and_omega} can be fulfilled for $\hat{\cA}_\omega^\epsilon$.

%\begin{assumption}[Model parameters]
%The quantities $\epsilon>0$ and $\omega_+,\omega_-\in(0,\frac14)$ are fixed parameters chosen in such a way that the operator $\sigma(\hat{\cA}_\omega^\epsilon)$ has gapped spectrum almost surely.
%\end{assumption}

\

Next, we recall the following well known result, which tells us that the spectra of our operators $\hat \cA_\omega^\epsilon$ and $\cA_\omega^\epsilon$ are contained in the union of the spectra of the corresponding restrictions to boxes of integer side.

\begin{lemma}
\label{lemma spectrum sub spectrum restrictions}
We have
\begin{equation}
\label{lemma spectrum sub spectrum restrictions eq1}
\sigma(\hat{\cA}_\omega^\epsilon) \subseteq \overline{\bigcup_{n\in \N}\sigma(\hat{\cA}_{\omega,n}^\epsilon)}
\end{equation}
and
\begin{equation}
\label{lemma spectrum sub spectrum restrictions eq2}
\sigma(\cA_\omega^\epsilon) \subseteq \overline{\bigcup_{n\in \N}\sigma(\cA_{\omega,n}^\epsilon)}
\end{equation}
almost surely.
\end{lemma}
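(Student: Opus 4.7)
The proof of both inclusions follows the standard strategy of truncating an approximate eigenfunction of the infinite-volume operator into a finite box with Dirichlet boundary conditions. To prove each inclusion, it suffices to show that, for every $\lambda$ in the spectrum of the relevant infinite-volume operator (which we denote generically by $\cA$) and every $\eta > 0$, one can exhibit $n \in \N$ with $\dist(\lambda, \sigma(\cA_n)) < \eta$, where $\cA_n$ is the corresponding Dirichlet restriction to $\Lambda_n$. Then $\lambda \in \overline{\bigcup_n \sigma(\cA_n)}$, which is the sought-after inclusion.

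By Weyl's criterion applied to the self-adjoint operator $\cA$, there exists $u \in \mathcal{D}(\cA)$ with $\|u\| = 1$ and $\|(\cA - \lambda) u\|$ arbitrarily small. Appealing to standard ergodicity arguments for random operators (stationarity of the law of $\cA_\omega^\epsilon$ under $\epsilon \Z^d$-shifts), one may further arrange, almost surely, for $u$ to be essentially supported strictly inside some box $\Lambda_n$ of integer side length. Next I would choose a smooth cutoff $\phi_n$ equal to $1$ on a large subset of $\Lambda_n$ containing the bulk of the $H^1$-mass of $u$, vanishing outside $\Lambda_n$, and with $\|\nabla \phi_n\|_\infty$ uniformly bounded. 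Setting $v := \phi_n u$, a direct Leibniz computation gives
\begin{equation*}
(\cA - \lambda) v \;=\; \phi_n (\cA - \lambda) u \;-\; 2\, a\, \nabla \phi_n \cdot \nabla u \;-\; u \operatorname{div}(a \nabla \phi_n),
\end{equation*}
where $a$ stands for $a_\omega^\epsilon$ or $\hat a_\omega^\epsilon$ as appropriate. The first summand is small by Weyl's criterion; the remaining two are supported in $\operatorname{supp}(\nabla \phi_n)$, where by construction the $H^1$-mass of $u$ is negligible. Hence $v \in H^1_0(\Lambda_n) \cap \mathcal{D}(\cA_n)$ is an approximate eigenfunction for $\cA_n$ at energy $\lambda$, and Weyl's criterion applied to $\cA_n$ yields $\dist(\lambda, \sigma(\cA_n)) < \eta$.

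The principal obstacle lies in \eqref{lemma spectrum sub spectrum restrictions eq1}, namely in the case of $\hat{\cA}_\omega^\epsilon$ with discontinuous diffusion $\hat a_\omega^\epsilon$. Here $v = \phi_n u$ need not \emph{a priori} belong to $\mathcal{D}(\hat{\cA}_\omega^\epsilon)$: the distributional expression $\operatorname{div}(\hat a_\omega^\epsilon \nabla \phi_n)$ picks up a singular contribution supported on the inclusion boundaries $\partial \cI_\omega^\epsilon$, so that the transmission condition $[\hat a_\omega^\epsilon \partial_\nu v] = 0$ across $\partial \cI_\omega^\epsilon$ --- which, together with piecewise $H^2$-regularity, characterises membership in $\mathcal{D}(\hat{\cA}_\omega^\epsilon)$ --- will fail unless $\partial_\nu \phi_n \equiv 0$ on $\partial \cI_\omega^\epsilon$. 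I would circumvent this by modifying $\phi_n$ in a small neighbourhood of each inclusion intersecting $\operatorname{supp}(\nabla \phi_n)$, making $\phi_n$ locally constant on each such inclusion. Since the inclusions have radii bounded by $\epsilon/4$ and are separated by distance of order $\epsilon$, such a modification can be carried out in a $C^1$ fashion and only perturbs $\phi_n$ by $O(\epsilon)$ in $C^1$-norm, leaving the above asymptotic estimates intact.
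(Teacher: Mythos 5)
The paper does not spell out a proof: it simply invokes \cite[Proposition~2.1]{Stollmann-98} and states that its argument applies unchanged. Your blind attempt reconstructs that Stollmann-type argument --- Weyl's criterion, truncation by a cutoff, ergodicity --- so in spirit you and the paper are doing the same thing, but you actually supply the details and, notably, address a genuine feature of this model that Stollmann's Schr\"odinger-operator setting does not have, namely the domain issue for the piecewise-constant coefficient $\hat a_\omega^\epsilon$. Your fix (arranging $\nabla\phi_n$ to vanish in a neighbourhood of every inclusion intersecting the transition belt, so that $\operatorname{div}(\hat a_\omega^\epsilon\nabla\phi_n)$ picks up no surface term) is the right idea and is not in Stollmann; it is a worthwhile observation.

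Two caveats. First, the ``appealing to standard ergodicity arguments'' step is genuinely where the work is, and it is not quite the throwaway it sounds like, because $\Lambda_n=(0,n)^d$ only exhausts the positive orthant, so a compactly supported approximate eigenfunction produced by Weyl + cutoff will in general \emph{not} live in any $\Lambda_n$. The correct implementation is: observe that for a fixed radius $R_0$ chosen large enough, the event ``there exists a normalized $u\in C_c^\infty(B_{R_0}(0))$ with $\|(\cA_\omega^\epsilon-\lambda)u\|<\eta$'' has positive probability; by stationarity the translated events at every $y\in\epsilon\Z^d$ all have the same positive probability; and by Birkhoff's ergodic theorem a.s.\ infinitely many such $y$ lie deep inside the positive orthant, after which $\operatorname{supp} u\subset\Lambda_n$ for some $n$. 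Without this step the argument simply does not reach the boxes anchored at $0$, so this is more than a matter of brevity --- it should be carried out. Second, your quantitative claim that making $\phi_n$ locally constant near inclusions ``only perturbs $\phi_n$ by $O(\epsilon)$ in $C^1$-norm'' is off: the modification forces $\phi_n$ to change by $O(\epsilon)$ over a distance $O(\epsilon)$, so the $C^1$-perturbation is $O(1)$, not $O(\epsilon)$. This does not break the argument --- what matters is that $\|\nabla\phi_n\|_\infty$ and $\|\operatorname{div}(\hat a_\omega^\epsilon\nabla\phi_n)\|_\infty$ stay uniformly bounded (and in fact the latter is needed, so the modified $\phi_n$ must be $W^{2,\infty}$ with bounded second derivatives, not merely $C^1$) --- but the statement should be corrected.
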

\begin{proof}
The proof of the first part of~\cite[Proposition 2.1]{Stollmann-98} applies unchanged to our setting.
\end{proof}

\begin{remark}
    \label{rem:actually_equality}
    One actually has identity in~\eqref{lemma spectrum sub spectrum restrictions eq1} and~\eqref{lemma spectrum sub spectrum restrictions eq2}.
    Strictly speaking, equality holds true, in higher generality, for restrictions with \emph{periodic} (as opposed to Dirichlet) boundary conditions, see~\cite[Proposition 2.1]{Stollmann-98}.
    However, since our coefficient matrices are diagonal, one can continue eigenfunctions of the restricted operators beyond $\Lambda_L(x)$ by antisymmetric reflections of the eigenfunctions and symmetric reflections of $a_\omega^\epsilon \mathrm{I}$ and obtain eigenfunctions with periodic boundary conditions on larger boxes.
    Then, \cite[Proposition 2.1]{Stollmann-98} yields an equality in~\eqref{lemma spectrum sub spectrum restrictions eq1} and~\eqref{lemma spectrum sub spectrum restrictions eq2}.
\end{remark}

An immediate, useful consequence of the above lemma is that that one is guaranteed that an interval $I\subset \R$  lies in the resolvent set of the full-space operators as soon as one can ensure that for all $n$ the restrictions of our operators to $\Lambda_n$ have no eigenvalues in $I$.

\begin{corollary}
\label{corollary boxes is enough}
Let $I\subset\R$. Then
\begin{equation*}
\label{corollary boxes is enough eq1}
\sigma(\hat{\cA}_{\omega,n}^\epsilon)\cap I=\emptyset \quad \forall n\in\N\quad \Rightarrow\quad \sigma(\hat{\cA}_\omega^\epsilon) \cap I=\emptyset
\end{equation*}
and
\begin{equation*}
\label{corollary boxes is enough eq2}
\sigma(\cA_{\omega,n}^\epsilon)\cap I=\emptyset \quad \forall n\in\N\quad \Rightarrow\quad \sigma(\cA_\omega^\epsilon) \cap I=\emptyset
\end{equation*}
almost surely.
\end{corollary}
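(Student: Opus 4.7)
The plan is to deduce this corollary as an essentially immediate consequence of Lemma~\ref{lemma spectrum sub spectrum restrictions}, via an elementary complementation argument, paying attention only to the topological subtlety introduced by the closure on the right-hand side of \eqref{lemma spectrum sub spectrum restrictions eq1}--\eqref{lemma spectrum sub spectrum restrictions eq2}.

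The key observation is that the implications in the statement make sense (and are true) precisely for $I$ \emph{open}, which is anyway the case of interest, since the corollary is to be applied in the next section to intervals of the form $(a,b)$ in order to establish gaps in the spectrum. Indeed, suppose $\sigma(\cA_{\omega,n}^\epsilon) \cap I = \emptyset$ for every $n \in \N$. Then $\bigcup_{n \in \N} \sigma(\cA_{\omega,n}^\epsilon) \subseteq \R \setminus I$. Since $I$ is open, its complement $\R \setminus I$ is closed, hence
\[
    \overline{\bigcup_{n \in \N} \sigma(\cA_{\omega,n}^\epsilon)}
    \subseteq \R \setminus I.
\]
Combining this with \eqref{lemma spectrum sub spectrum restrictions eq2} gives $\sigma(\cA_\omega^\epsilon) \cap I = \emptyset$, almost surely. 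The same reasoning, applied using \eqref{lemma spectrum sub spectrum restrictions eq1} in place of \eqref{lemma spectrum sub spectrum restrictions eq2}, yields the statement for $\hat{\cA}_\omega^\epsilon$.

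There is no substantial obstacle here: the content of the corollary is packaged entirely into Lemma~\ref{lemma spectrum sub spectrum restrictions} (and Remark~\ref{rem:actually_equality}), and this step is purely a convenient reformulation. The only point worth flagging explicitly, perhaps by means of a short remark, is the implicit requirement that $I$ be open; if $I$ were, e.g., a single point lying in the topological closure of $\bigcup_n \sigma(\cA_{\omega,n}^\epsilon)$ but not in any individual $\sigma(\cA_{\omega,n}^\epsilon)$, the implication would fail. In the application to Theorem~\ref{theorem A has gaps} this is not an issue since one needs to rule out spectrum on an open interval $(a,b)$.
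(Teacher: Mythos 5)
Your proof is correct and follows exactly the route the paper has in mind: the corollary is presented there without proof, as an ``immediate consequence'' of Lemma~\ref{lemma spectrum sub spectrum restrictions}, and your complementation argument is the obvious way to unpack it. Your observation that the implication requires $I$ to be open is a genuine (if minor) refinement of the statement as written --- the paper says only ``Let $I\subset\R$'' --- and you correctly note that this causes no trouble in the application, since Theorem~\ref{theorem A has gaps} and Theorem~\ref{theorem gaps are protected} apply the corollary to open intervals $(a,b)$ and $(I_-,I_+)$.
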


Before supplying the main technical ingredient entering the proof of Theorem~\ref{theorem A has gaps}, let us state and prove a simple lemma.

\begin{lemma}
\label{lemma eigenvalues are continuous}
Let $\delta>0$ be a fixed small parameter.
The map
\begin{equation}
\label{lemma eigenvalues are continuous eq1}
[0,1]\ni t\mapsto \lambda_k(\hat{\cA}_{\omega+t\delta \mathbf{e},n}^\epsilon)
\end{equation}
is continuous for all $k\in \N$.
\end{lemma}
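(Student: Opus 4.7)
The plan is to combine the variational (min-max) characterisation of eigenvalues with Lebesgue dominated convergence for the quadratic forms. First, observe that $\hat\cA^\epsilon_{\omega+t\delta\mathbf{e},n}$ is the self-adjoint realisation on $L^2(\Lambda_n)$ of the Dirichlet form
\begin{equation*}
q_t(u,v) := \int_{\Lambda_n} \hat a^\epsilon_{\omega+t\delta\mathbf{e}}\,\nabla u\cdot\nabla v\,\mathrm{d}x
\end{equation*}
on the $t$-independent domain $H^1_0(\Lambda_n)$, uniformly equivalent to $\|\nabla\,\cdot\,\|^2_{L^2}$ thanks to $\epsilon^2\le \hat a^\epsilon_{\omega+t\delta\mathbf{e}}\le 1$. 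The spectrum is therefore discrete and satisfies
\begin{equation*}
\lambda_k(t) := \lambda_k(\hat\cA^\epsilon_{\omega+t\delta\mathbf{e},n}) = \inf_{\substack{V\subset H^1_0(\Lambda_n)\\ \dim V = k}}\sup_{u\in V\setminus\{0\}}\frac{q_t(u,u)}{\|u\|^2_{L^2}}\,.
\end{equation*}
Since only the radii of the inclusions in $\cI^\epsilon_{\omega+t\delta\mathbf{e}}$ vary (continuously) with $t$, the coefficient $\hat a^\epsilon_{\omega+t\delta\mathbf{e}}(x)$ converges to $\hat a^\epsilon_{\omega+t_0\delta\mathbf{e}}(x)$ for almost every $x\in\Lambda_n$ as $t\to t_0$, dominated by $1$.

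For the upper bound $\limsup_{t\to t_0}\lambda_k(t)\le\lambda_k(t_0)$, feed into the min-max the $k$-dimensional subspace $V^*:=\operatorname{span}\{\psi_1,\ldots,\psi_k\}$ spanned by the first $k$ orthonormal eigenfunctions of $\hat\cA^\epsilon_{\omega+t_0\delta\mathbf{e},n}$. Being finite-dimensional, the family $\{|\nabla u|^2 : u\in V^*,\,\|u\|_{L^2}=1\}$ is compact, hence uniformly integrable, in $L^1(\Lambda_n)$. Dominated convergence then yields $\sup_{u\in V^*,\|u\|=1}q_t(u,u)\to\sup_{u\in V^*,\|u\|=1}q_{t_0}(u,u)=\lambda_k(t_0)$, and the min-max principle gives the claim.

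For the lower bound $\liminf_{t\to t_0}\lambda_k(t)\ge\lambda_k(t_0)$, let $u^t_1,\dots,u^t_k$ be an $L^2$-orthonormal family of eigenfunctions of $\hat\cA^\epsilon_{\omega+t\delta\mathbf{e},n}$ with eigenvalues $\lambda_1(t)\le\cdots\le\lambda_k(t)$. From $\epsilon^2\|\nabla u^t_j\|_{L^2}^2\le q_t(u^t_j,u^t_j)=\lambda_j(t)\le\lambda_k(0)$, they are bounded in $H^1_0(\Lambda_n)$ uniformly in $t$. Given any sequence $t_n\to t_0$, Rellich-Kondrachov yields (upon passing to a subsequence) $u^{t_n}_j\rightharpoonup u^*_j$ in $H^1_0(\Lambda_n)$ and $u^{t_n}_j\to u^*_j$ strongly in $L^2(\Lambda_n)$, with the limits remaining orthonormal. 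For any unit vector $c\in\R^k$, putting $u^{t_n}_c:=\sum_j c_j u^{t_n}_j$ and $u_c:=\sum_j c_j u^*_j$, one has $\|u^{t_n}_c\|_{L^2}=1$, $q_{t_n}(u^{t_n}_c,u^{t_n}_c)=\sum_j|c_j|^2\lambda_j(t_n)\le\lambda_k(t_n)$, and $u^{t_n}_c\rightharpoonup u_c$ in $H^1_0(\Lambda_n)$. Since $\sqrt{\hat a^\epsilon_{\omega+t_n\delta\mathbf{e}}}\to\sqrt{\hat a^\epsilon_{\omega+t_0\delta\mathbf{e}}}$ strongly in every $L^p$ by dominated convergence and $\nabla u^{t_n}_c\rightharpoonup\nabla u_c$ weakly in $L^2$, the product $\sqrt{\hat a^\epsilon_{\omega+t_n\delta\mathbf{e}}}\nabla u^{t_n}_c$ converges weakly in $L^2$ to $\sqrt{\hat a^\epsilon_{\omega+t_0\delta\mathbf{e}}}\nabla u_c$. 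Weak lower semicontinuity of the $L^2$-norm gives $q_{t_0}(u_c,u_c)\le\liminf_n\lambda_k(t_n)$, and since this holds for every $u_c$ in the $k$-dimensional subspace $\operatorname{span}\{u^*_j\}$, the min-max principle yields $\lambda_k(t_0)\le\liminf_n\lambda_k(t_n)$.

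The main obstacle is precisely this lower bound: because the coefficients fail to converge in $L^\infty$ (the jump in $\hat a^\epsilon$ across the moving inclusion boundary remains of order $1-\epsilon^2$), one cannot compare $q_t$ and $q_{t_0}$ by a direct norm-perturbation argument, and must instead invoke Rellich compactness of eigenfunctions together with a weak-strong passage to the limit in the quadratic form.
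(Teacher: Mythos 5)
Your proof is correct, but it takes a genuinely different and somewhat heavier route than the paper's. You establish upper and lower semicontinuity of $t\mapsto\lambda_k(t)$ as two independent statements: the upper bound by feeding the eigenspace at $t_0$ into the min-max functional and invoking dominated convergence (via uniform integrability of $\{|\nabla u|^2\}$ over the compact unit sphere of a fixed finite-dimensional subspace), and the lower bound by a Rellich--Kondrachov compactness argument on the eigenfunctions combined with weak lower semicontinuity of the quadratic form. The paper, by contrast, exploits the structural fact --- noted in a footnote --- that $t\mapsto\lambda_k(\hat\cA^\epsilon_{\omega+t\delta\mathbf e,n})$ is \emph{monotone decreasing}, since enlarging the inclusions only enlarges the region where $\hat a^\epsilon$ equals $\epsilon^2$. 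Monotonicity immediately delivers one semicontinuity inequality in each direction, so only a single one-sided min-max comparison is needed: testing the form at $t_1$ with the eigenfunctions at $t_2>t_1$ and observing that the coefficient difference is supported on a set of measure $O(|t_2-t_1|)$, so that $\langle(\hat a^\epsilon_{t_1}-\hat a^\epsilon_{t_2})\nabla\varphi,\nabla\varphi\rangle\to0$ by absolute continuity of the Lebesgue integral applied to the fixed integrable density $|\nabla\varphi|^2$. Your approach is more robust --- it would apply unchanged to non-monotone $t$-dependence --- and it correctly identifies the core difficulty (the coefficients do not converge in $L^\infty$), but it costs you the extra compactness machinery that the paper avoids by using monotonicity. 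One small remark on your lower bound: the inference that $\sqrt{\hat a^\epsilon_{t_n}}\nabla u^{t_n}_c\rightharpoonup\sqrt{\hat a^\epsilon_{t_0}}\nabla u_c$ weakly in $L^2$ does not follow from the ``strong $L^p$ times weak $L^2$'' product rule alone (that would only give weak convergence in some $L^r$ with $r<2$); you also need the uniform $L^\infty$ bound $\sqrt{\hat a^\epsilon_{t_n}}\le1$ to guarantee the products are bounded in $L^2$, after which the limit is identified by testing against $\phi\in L^2$ and noting $\sqrt{\hat a^\epsilon_{t_n}}\,\phi\to\sqrt{\hat a^\epsilon_{t_0}}\,\phi$ strongly in $L^2$ by dominated convergence. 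The conclusion is correct, but the justification as written is slightly loose.
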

\begin{proof}
Fix a threshold $E_{\mathrm{max}}\in (0,+\infty)$ and suppose $\lambda_k(\hat{\cA}_{\omega,n}^\epsilon)<E_{\mathrm{max}}$. Let $0\le t_1<t_2\le1$. For $j\in\{1,\dots, k\}$, let $\varphi_j^{(2)}$ be the orthonormalised eigenfunctions corresponding to the eigenvalues $\lambda_j(\hat{\cA}_{\omega+t_2\delta \mathbf{e},n}^\epsilon)$. By the Min-Max Theorem we have
\begin{multline}
\label{proof lemma eigenvalues are continuous eq1}
\lambda_k(\hat{\cA}_{\omega+t_1\delta \mathbf{e},n}^\epsilon)=\min_{\dim V=k}\max_{\substack{\varphi\in V,\\ V\subset H^1_0(\Lambda_n),\\\|\varphi\|=1}} \langle \chi_{\Lambda_n}\hat a_{\omega+t_1\delta \mathbf{e}}^\epsilon \nabla \varphi, \nabla\varphi \rangle
\\
\le 
\max_{\substack{\varphi\in\mathrm{span}\{\varphi_1^{(2)}, \dots, \varphi_k^{(2)}\},\\ \|\varphi\|=1}} \langle \chi_{\Lambda_n}\hat a_{\omega+t_2\delta \mathbf{e}}^\epsilon \nabla \varphi, \nabla\varphi \rangle
%\\
+
\max_{\substack{\varphi\in\mathrm{span}\{\varphi_1^{(2)}, \dots, \varphi_k^{(2)}\},\\ \|\varphi\|=1}} \langle \chi_{\Lambda_n}(\hat a_{\omega+t_1\delta \mathbf{e}}^\epsilon-\hat a_{\omega+t_2\delta \mathbf{e}}^\epsilon) \nabla \varphi, \nabla\varphi \rangle
\\
=\lambda_k(\hat{\cA}_{\omega+t_2\delta \mathbf{e},n}^\epsilon)
%\\
+
\max_{\substack{\varphi\in\mathrm{span}\{\varphi_1^{(2)}, \dots, \varphi_k^{(2)}\},\\ \|\varphi\|=1}} \langle \chi_{\Lambda_n}(\hat a_{\omega+t_1\delta \mathbf{e}}^\epsilon-\hat a_{\omega+t_2\delta \mathbf{e}}^\epsilon) \nabla \varphi, \nabla\varphi \rangle\,.
\end{multline}
Since
\begin{equation*}
\label{proof lemma eigenvalues are continuous eq2}
\epsilon^2 \bigcup_{z\in s_\epsilon+ \epsilon\Z^d}\chi_{B_{\epsilon_{\omega_z+\delta t_2}}(z)\setminus B_{\epsilon_{\omega_z+\delta t_1}}(z)} 
\le
\hat a_{\omega+t_1\delta \mathbf{e}}^\epsilon-\hat a_{\omega+t_2\delta \mathbf{e}}^\epsilon
\le
\bigcup_{z\in s_\epsilon+\epsilon\Z^d}\chi_{B_{\epsilon_{\omega_z+\delta t_2}}(z)\setminus B_{\epsilon_{\omega_z+\delta t_1}}(z)}
\end{equation*}
by \eqref{a hat}, then
\begin{equation}
\label{proof lemma eigenvalues are continuous eq3}
\langle \chi_{\Lambda_n}(\hat a_{\omega+t_1\delta \mathbf{e}}^\epsilon-\hat a_{\omega+t_2\delta \mathbf{e}}^\epsilon) \nabla \varphi, \nabla\varphi \rangle \le C(d,n,E_\mathrm{max}) \,o(|t_2-t_1|)\,.
\end{equation}
Formulae \eqref{proof lemma eigenvalues are continuous eq1} and \eqref{proof lemma eigenvalues are continuous eq3}, in conjunction with the fact that \eqref{lemma eigenvalues are continuous eq1} is monotonically decreasing\footnote{This follows once again from \eqref{a hat} and the Min-Max Theorem.}, yield the claim.
\end{proof}

\begin{theorem}
\label{theorem gaps are protected}
Let $I:=(I_-,I_+)\subset \R$ be an interval. Fix $\epsilon\in(0,1)$ and let $\delta:=\epsilon^\gamma/4$. Suppose that
\begin{equation}
\label{theorem gaps are protected eq1}
\sigma(\hat{\cA}_{\omega+t\delta \mathbf{e}}^\epsilon)\cap I=\emptyset \qquad \forall t\in[0,1]
\end{equation}
almost surely.
Then we have
\begin{equation*}
\label{theorem gaps are protected eq2}
\sigma(\cA_\omega^\epsilon) \cap I =\emptyset
\end{equation*}
almost surely.
\end{theorem}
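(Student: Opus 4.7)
The plan is to reduce the problem to finite boxes via Corollary~\ref{corollary boxes is enough} and then control the spectrum of $\cA_{\omega,n}^\epsilon$ by a monotone continuous sandwich inside the hat family $\{\hat\cA_{\omega + t\delta\mathbf{e}, n}^\epsilon\}_{t \in [0,1]}$. First, by Corollary~\ref{corollary boxes is enough} it suffices to prove that, almost surely, $(I_-,I_+) \cap \sigma(\cA_{\omega,n}^\epsilon) = \emptyset$ for every $n \in \N$. Using Remark~\ref{rem:actually_equality} together with openness of $I$, the hypothesis~\eqref{theorem gaps are protected eq1} also provides a common $\PP$-full-measure event on which $(I_-,I_+) \cap \sigma(\hat\cA_{\omega + t\delta\mathbf{e}, n}^\epsilon) = \emptyset$ for every $n \in \N$ and every $t \in [0,1]$. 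I would fix $\omega$ in this event and $n \in \N$ from here on.

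The heart of the argument is a pointwise form comparison, which reduces to the layer $\cL_\omega^\epsilon$ since elsewhere $a_\omega^\epsilon$ and the step functions $\hat a_{\omega + t\delta\mathbf{e}}^\epsilon$ agree. Inside $\cL_\omega^\epsilon$, the Lipschitz interpolation $a_\omega^\epsilon$ takes values in $[\epsilon^2,1]$, whereas $\hat a_{\omega + t\delta\mathbf{e}}^\epsilon$ is piecewise constant in $x$ and monotonically non-increasing in $t$, decreasing from $1$ to $\epsilon^2$ as the enlarged inclusions progressively absorb the layer. The plan is to pick $t_+ \in [0,1]$ so that
\[
\hat a_{\omega + t_+\delta\mathbf{e}}^\epsilon(x) \;\leq\; a_\omega^\epsilon(x) \;\leq\; \hat a_\omega^\epsilon(x), \qquad \forall x \in \R^d,
\]
the right-hand inequality being immediate. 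If a single shift of size $\delta$ is insufficient to engulf the entire original layer in the enlarged inclusions, I would subdivide $[0,1]$ into finitely many subintervals and iterate, chaining the resulting one-step sandwiches by transitivity. The min-max principle applied to the Dirichlet Rayleigh quotients on $\Lambda_n$ then yields
\[
\lambda_k(\hat\cA_{\omega + t_+\delta\mathbf{e}, n}^\epsilon) \;\leq\; \lambda_k(\cA_{\omega, n}^\epsilon) \;\leq\; \lambda_k(\hat\cA_{\omega, n}^\epsilon) \qquad \forall k \in \N.
\]

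To conclude, I would invoke Lemma~\ref{lemma eigenvalues are continuous}: the map $t \mapsto \lambda_k(\hat\cA_{\omega + t\delta\mathbf{e}, n}^\epsilon)$ is continuous on $[0,1]$ and (by the same min-max argument) monotonically non-increasing. Since its image avoids the open interval $(I_-,I_+)$, connectedness of $[0,1]$ forces it to lie entirely on one side. In particular, both sandwich endpoints $\lambda_k(\hat\cA_{\omega,n}^\epsilon)$ and $\lambda_k(\hat\cA_{\omega + t_+\delta\mathbf{e}, n}^\epsilon)$ lie simultaneously either $\leq I_-$ or $\geq I_+$, forcing $\lambda_k(\cA_{\omega,n}^\epsilon) \notin (I_-,I_+)$. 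As this is true for every $k$ and every $n$, the claim follows on the full-measure event fixed above.

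The main obstacle I anticipate is the pointwise form comparison: confirming that a shift $t_+\delta$ in $[0,1]\delta$ is enough to enclose the Lipschitz coefficient $a_\omega^\epsilon$ between two members of the piecewise-constant hat family, or, failing that, carefully setting up and controlling a finite iteration over intermediate shifts. The continuity and monotonicity provided by Lemma~\ref{lemma eigenvalues are continuous} are precisely what prevent any eigenvalue from sneaking across the gap during such an iteration; all remaining steps are routine min-max consequences.
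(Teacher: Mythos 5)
Your proof takes the same route as the paper's: reduce to boxes via Corollary~\ref{corollary boxes is enough}, establish the pointwise operator sandwich $\hat\cA_{\omega + t_+\delta\mathbf{e},n}^\epsilon \leq \cA_{\omega,n}^\epsilon \leq \hat\cA_{\omega,n}^\epsilon$ (with $t_+ = 1$ in the paper), pass to eigenvalues via min-max, and conclude from continuity of $t \mapsto \lambda_k(\hat\cA_{\omega+t\delta\mathbf{e},n}^\epsilon)$ provided by Lemma~\ref{lemma eigenvalues are continuous} --- you phrase this via connectedness of the image, the paper by contradiction and the intermediate value theorem; these are the same argument.

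Two cautions about the obstacle you flagged. First, the iterative fallback you propose (subdividing $[0,1]$ and chaining one-step comparisons) cannot rescue the sandwich: subdividing the parameter interval does not increase the total shift available, so if $\cI_{\omega + \delta\mathbf{e}}^\epsilon$ fails to engulf $\cL_\omega^\epsilon$, no chain of intermediate comparisons will produce a $t_+\in[0,1]$ for which the left inequality $\hat a_{\omega+t_+\delta\mathbf{e}}^\epsilon \leq a_\omega^\epsilon$ holds. Second, your worry is in fact warranted at face value: shifting $\omega_z \mapsto \omega_z + \delta$ enlarges the radius of the $z$-th sharp inclusion by $\epsilon\delta$ (since the radius is $\epsilon\omega_z$), and with the stated choice $\delta = \epsilon^\gamma/4$ one has $\epsilon\delta = \epsilon^{\gamma+1}/4 < \epsilon^\gamma/4$, the latter being the physical thickness of the layer in~\eqref{L_omega}; hence the enlarged sharp inclusion does not reach the outer edge of $\cL_\omega^\epsilon$ and the left inequality fails on an annulus of positive measure. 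The value consistent with Figure~\ref{fig:a_and_a_hat} and the remark following the theorem is $\delta = \epsilon^{\gamma-1}/4$; with that choice (the discrepancy is on the paper's side, not yours) the sandwich holds with $t_+ = 1$ and your argument --- which is the paper's --- closes.
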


\begin{remark}
From a geometric perspective, the above theorem tells us the following: if we know that, by simultaneously and continuously increasing the radii of \emph{all} sharp (i.e., without boundary layer) inclusions by an amount between $0$ and $\delta$, the spectrum of the resulting (sharp) operator never falls in the interval $I$ --- the content of formula~\eqref{theorem gaps are protected eq1}, then 
no spectrum of the operator $\cA_\omega^\epsilon$ can fall in $I$ either. Note that $\delta$ is precisely the size of the boundary layer of `smoothened' inclusions, so that the perturbation of radii of sharp inclusions ranges between the inner and the outer boundary of the boundary layer of `smoothened' inclusions.
We refer to Figure~\ref{fig:a_and_a_hat} for an illustration.
\end{remark}

\begin{figure}[ht]
\begin{center}
\begin{tikzpicture}
\begin{scope}
    \draw[thick, ->] (-4,0) -- (4.5,0);
    \draw[thick, ->] (0,-.1) -- (0,2.25);

    \begin{scope}[yshift = -.0cm]
    \draw[thick] (-4,2) -- (-3,2) -- (-2,.1) -- (2,.1) -- (3,2) -- (4,2);
    \end{scope}

    \begin{scope}[yshift = -0.0cm]
    \draw[thick, red]           (-4,2) -- (-3,2);
    \draw[thick, dashed, red]   (-3,2) -- (-3,.1);
    \draw[thick, red]           (-3,.1) -- (3,.1);
    \draw[thick, dashed, red]   (3,.1) -- (3,2);
    \draw[thick, red]           (3,2) -- (4,2);
    \end{scope}

    \begin{scope}[yshift = .0cm]
    \draw[thick, blue]           (-4,2) -- (-2.5,2);
    \draw[thick, dashed, blue]   (-2.5,2) -- (-2.5,.1);
    \draw[thick, blue]           (-2.5,.1) -- (2.5,.1);
    \draw[thick, dashed, blue]   (2.5,.1) -- (2.5,2);
    \draw[thick, blue]           (2.5,2) -- (4,2);
    \end{scope}

    \begin{scope}[yshift = .0cm]
    \draw[thick, teal]           (-4,2) -- (-2,2);
    \draw[thick, dashed, teal]   (-2,2) -- (-2,.1);
    \draw[thick, teal]           (-2,.1) -- (2,.1);
    \draw[thick, dashed, teal]   (2,.1) -- (2,2);
    \draw[thick, teal]           (2,2) -- (4,2);
    \end{scope}

    \draw[very thick] (-.1,.1) -- (.1,.1);
    \draw (-.25,.4) node {$\epsilon^2$};
    \draw[very thick] (-.1,2) -- (.1,2);
    \draw (-.25,2) node {$1$};
\end{scope}

\begin{scope}[xshift = 5cm]

    \draw[thick] (0,1.75) -- (.75,1.75);
    \draw[thick, teal] (0,1.25) -- (.75,1.25);
    \draw[thick, blue] (0,.75) -- (.75,.75);
    \draw[thick, red] (0,.25) -- (.75,.25);

    \draw[anchor = west] (1,1.75) node {$a_\omega^\epsilon$};
    \draw[anchor = west] (1,1.25) node {$\hat{a}_\omega^\epsilon$};
    \draw[anchor = west] (1,.75) node {$\hat{a}_{\omega + t \delta \mathbf{e}}^\epsilon$};
    \draw[anchor = west] (1,.25) node {$\hat{a}_{\omega + \delta \mathbf{e}}^\epsilon$};
    
\end{scope}

\end{tikzpicture}
\end{center}

\caption{Geometric visualisation of Theorem~\ref{theorem gaps are protected}.}
\label{fig:a_and_a_hat}
\end{figure}

\begin{proof}[Proof of Theorem~\ref{theorem gaps are protected}]
In view of Corollary~\ref{corollary boxes is enough}, it suffices to show that 
\begin{equation}
\label{proof theorem gaps are protected eq1}
\sigma(\cA_{\omega,n}^\epsilon) \cap I =\emptyset \qquad \forall n \in \N
\end{equation}
almost surely.

To this end, let us begin by observing that the explicit formulae~\eqref{a_epsilon} and~\eqref{a hat} for our coefficients imply
$
\hat{\cA}_{\omega+\delta \mathbf{e},n}^\epsilon\le \cA_{\omega,n}^\epsilon\le\hat{\cA}_{\omega,n}^\epsilon\,,
$
which, in turn, gives us
\begin{equation}
\label{proof theorem gaps are protected eq7}
\lambda_k(\hat{\cA}_{\omega+\delta \mathbf{e},n}^\epsilon)
\le 
\lambda_k(\cA_{\omega,n}^\epsilon)
\le
\lambda_k(\hat{\cA}_{\omega,n}^\epsilon)\,.
\end{equation}
Arguing by contradiction, suppose there exists $k\in \N$ such that 
\begin{equation}
\label{proof theorem gaps are protected eq8}
\lambda_k(\cA_{\omega,n}^\epsilon)\in I\,.
\end{equation}
Then formulae \eqref{theorem gaps are protected eq1}, \eqref{proof theorem gaps are protected eq7}, and \eqref{proof theorem gaps are protected eq8} imply
\begin{equation*}
\label{proof theorem gaps are protected eq9}
\lambda_k(\hat{\cA}_{\omega+\delta \mathbf{e},n}^\epsilon)\le I_- 
\qquad 
\text{and} 
\qquad 
\lambda_k(\hat{\cA}_{\omega,n}^\epsilon)\ge I_+.
\end{equation*}
But Lemma~\ref{lemma eigenvalues are continuous} would then imply $\lambda_k(\hat{\cA}_{\omega+t\delta \mathbf{e},n}^\epsilon)\in I=(I_-,I_+)$ for some $t\in[0,1]$, thus contradicting \eqref{theorem gaps are protected eq1}.
Hence \eqref{proof theorem gaps are protected eq8} cannot hold and we must have \eqref{proof theorem gaps are protected eq1}.
\end{proof}

Equipped with the above results, we are finally in a position to prove Theorem~\ref{theorem A has gaps}.

\begin{proof}[Proof of Theorem~\ref{theorem A has gaps}]
It follows from~\cite[\S~5.6.4]{CherdantsevCV-21} that, for sufficiently small $\epsilon$, the spectra of both $\hat{\cA}_{\omega}^\epsilon$ and $\hat{\cA}_{\omega+\delta \mathbf{e}}^\epsilon$, $\delta:=\epsilon^\gamma/4$, have gaps. By Lemma~\ref{lemma eigenvalues are continuous} we then have \eqref{theorem gaps are protected eq1}, so that Theorem~\ref{theorem gaps are protected} yields the claim.
\end{proof}

\section{Wegner estimate}
\label{Wegner estimate}

This section is devoted to the proof of the Weger estimate~\eqref{eq:Wegner}.

\

Let us begin by stating and proving a helpful lemma on eigenvalue lifting.

\begin{lemma}
\label{lem:eigenvalue_lifting_reformulated}
    Let $\epsilon > 0$ and $0 < \tilde E_- < \tilde E_+ < \infty$.
    There exists constants $\tau > 1$ and $s_0 > 0$ such that 
    we have
    \begin{equation}
\label{eq:lem_eigenvalue_lifting_eq2}
    \lambda_k(\cA_{\omega- s \mathbf{e}, L}^\epsilon)
    \geq
    \lambda_k(\cA_{\omega, L}^\epsilon)
    +
    s^\tau \qquad \forall s\in[0,s_0]\,,
    \end{equation}
    for all $k \in \N$, $L \in \epsilon \N$, and $\omega \in [\omega_-,\omega_+]^{\Z^d}$ for which
    \begin{equation*}
\label{eq:lem_eigenvalue_lifting_eq1}
        \tilde E_- 
    \leq \lambda_k(\cA_{\omega, L}^\epsilon)
    \leq
    \lambda_k(\cA_{\omega - s_0 \mathbf{e}, L}^\epsilon)
    \leq
    \tilde E_+\,.
    \end{equation*}
\end{lemma}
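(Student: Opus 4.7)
The plan is to compare the two operators via the variational principle and to quantify the form-monotonicity in the parameter $s$ by combining the coefficient lower bound~\eqref{eq:coeff_property_2a} with the scale-free quantitative unique continuation estimate for the gradient of Dicke and Dicke--Veseli\'c~\cite{Dicke-21,DickeV-23}. Since $a_{\omega-s\mathbf{e}}^\epsilon \geq a_\omega^\epsilon$ pointwise (shrinking the inclusions enlarges the diffusion coefficient), the difference
\[
B_s[u,u] := \int_{\Lambda_L}\bigl(a_{\omega-s\mathbf{e}}^\epsilon - a_\omega^\epsilon\bigr)\,\lvert\nabla u\rvert^2\,\mathrm{d}x,\qquad u\in H^1_0(\Lambda_L),
\]
defines a nonnegative closed quadratic form and, in the form sense, $\cA_{\omega-s\mathbf{e},L}^\epsilon = \cA_{\omega,L}^\epsilon + B_s$. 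By a standard Min--Max eigenvalue lifting argument of the type appearing in~\cite[Prop.~3.1]{NakicTTV-18}, the claim will follow once we establish a uniform lower bound
\[
B_s[\varphi,\varphi] \geq C\,s^\tau\,\lVert\varphi\rVert^2,\qquad \tau>1,
\]
valid for all $\varphi$ in the spectral subspace $\operatorname{Ran}\chi_{(-\infty,\tilde E_+]}(\cA_{\omega,L}^\epsilon)$. The hypothesis $\lambda_k(\cA_{\omega-s_0\mathbf{e},L}^\epsilon)\leq \tilde E_+$, combined with the monotonicity of eigenvalues in $s$, ensures that the relevant eigenvalues of both $\cA_{\omega,L}^\epsilon$ and $\cA_{\omega-s\mathbf{e},L}^\epsilon$ lie in $[\tilde E_-,\tilde E_+]$ for $s\in[0,s_0]$, so that the lifted eigenvalue stays in the window where the bound is meaningful.

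To obtain the desired lower bound on $B_s[\varphi,\varphi]$, I would first invoke the coefficient monotonicity~\eqref{eq:coeff_property_2a}, applied with $\omega$ replaced by $\omega-s\mathbf{e}$: this furnishes an $(\epsilon,\epsilon^\gamma/10)$-equidistributed sequence $X=\{x_z\}_{z\in\epsilon\Z^d}$ such that, for all $s\in[0,\tfrac{1}{4}\epsilon^{\gamma-1}]$,
\[
a_{\omega-s\mathbf{e}}^\epsilon - a_\omega^\epsilon \geq \alpha_\epsilon\,s\,\sum_{z\in\epsilon\Z^d}\chi_{B_{\epsilon^\gamma s/10}(x_z)}\,.
\]
Setting $S:=\bigcup_{z\in\epsilon\Z^d}B_{\epsilon^\gamma s/10}(x_z)$, this immediately implies
\[
B_s[\varphi,\varphi] \geq \alpha_\epsilon\,s \int_S \lvert\nabla\varphi\rvert^2\,\mathrm{d}x,
\]
and reduces the problem to lower-bounding the $L^2$-mass of $\nabla\varphi$ on the scaled equidistributed collection of balls $S$, whose radii shrink linearly in $s$.

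The key step is then to apply the scale-free quantitative unique continuation estimate for the gradient of Dicke--Veseli\'c~\cite{DickeV-23} to the uniformly elliptic operator $\cA_{\omega,L}^\epsilon$, whose coefficient function $a_\omega^\epsilon$ is Lipschitz continuous with constant bounded by $4\epsilon^{-\gamma}$. This yields
\[
\int_S \lvert\nabla\varphi\rvert^2\,\mathrm{d}x \geq C_{\mathrm{UCP}}\,\bigl(\epsilon^\gamma s/10\bigr)^{N}\,\lVert\varphi\rVert^2
\]
for every $\varphi \in \operatorname{Ran}\chi_{(-\infty,\tilde E_+]}(\cA_{\omega,L}^\epsilon)$, with the constant $C_{\mathrm{UCP}}>0$ and the exponent $N>0$ depending only on $d$, the Lipschitz constant, the energy ceiling $\tilde E_+$ and the equidistribution parameters --- crucially, both independent of $L$. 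Combining with the previous display gives $B_s[\varphi,\varphi]\geq \widetilde C_\epsilon\,s^{N+1}\lVert\varphi\rVert^2$, and choosing $\tau := N+1>1$ (with $s_0$ small enough to absorb the constant $\widetilde C_\epsilon$ and to keep the lifted eigenvalue below $\tilde E_+$) concludes the proof via the Min--Max lifting.

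The main technical obstacle is the third step: the Dicke--Veseli\'c gradient estimate must be applied to \emph{every} element of the finite-dimensional spectral subspace of $\cA_{\omega,L}^\epsilon$ below $\tilde E_+$ (not merely to individual eigenfunctions), with constants that are simultaneously uniform in the volume $L$ and \emph{explicit and polynomial} in the small geometric parameter $\delta=\epsilon^\gamma s/10$. This is precisely what the scale-free formulation delivers, but it makes essential use of the Lipschitz continuity of $a_\omega^\epsilon$ enforced by the boundary layer $\cL_\omega^\epsilon$ --- indeed, the very reason for introducing that layer in the first place. Consequently, the exponent $\tau$ inherits a nontrivial dependence on the Lipschitz constant $\lesssim\epsilon^{-\gamma}$ and on the energy bound $\tilde E_+$, and cannot in general be chosen close to $1$.
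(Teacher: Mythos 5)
Your proposal follows essentially the same route as the paper's proof: reduce via the coefficient monotonicity \eqref{eq:coeff_property_2a} (with $\omega$ replaced by $\omega-s\mathbf{e}$) to a lower bound on the $L^2$-mass of $\nabla\varphi$ over the $s$-scaled equidistributed balls, then invoke the Dicke--Veseli\'c scale-free UCP for gradients of spectral projections of $\cA_{\omega,L}^\epsilon$ (valid because $a_\omega^\epsilon$ is Lipschitz), and finally lift eigenvalues by Min--Max. The paper organizes this slightly differently: rather than packaging the argument as a form-perturbation lower bound plus an abstract lifting lemma \`a la~\cite[Prop.~3.1]{NakicTTV-18}, it first passes to the comparison operator $-\nabla\bigl((a_\omega^\epsilon+\alpha_\epsilon s\chi_{S})\nabla\bigr)\big|_{\Lambda_L}$, mollifies the indicator into a Lipschitz $g$ satisfying~\eqref{eq:proof_lem_eigenvalue_lifting_eq2bis}, and then applies~\cite[Cor.~6.5]{DickeV-23} which is already formulated as an eigenvalue-lifting statement. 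Your version sidesteps the explicit mollification of the indicator because the UCP is applied only to the operator $\cA_{\omega,L}^\epsilon$, whose coefficient is Lipschitz regardless; this is a legitimate and mildly cleaner reorganization.

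There is, however, one small but genuine slip at the end. You conclude $B_s[\varphi,\varphi]\ge\widetilde C_\epsilon\, s^{N+1}\lVert\varphi\rVert^2$ and then ``choose $\tau := N+1$ with $s_0$ small enough to absorb $\widetilde C_\epsilon$.'' This does not work when $\widetilde C_\epsilon<1$ (which is generically the case here, since $\widetilde C_\epsilon$ contains factors like $(\epsilon^\gamma/10)^N$): no amount of shrinking $s_0$ turns $\widetilde C_\epsilon\, s^{N+1}$ into $s^{N+1}$. One must strictly increase the exponent, e.g.\ set $\tau := N+2$ and require $s_0\le\widetilde C_\epsilon$, or follow the paper's manipulation $\mathfrak{c}_\epsilon=s^{\lvert\ln\mathfrak{c}_\epsilon\rvert/\lvert\ln s\rvert}\ge s^{C}$ for $0<s\le s_0<1$ as in~\eqref{eq:proof_lem_eigenvalue_lifting_eq4}. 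This is a routine fix, but as stated the last line of your argument is not literally correct.
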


\begin{proof}
For $s=0$ the claim \eqref{eq:lem_eigenvalue_lifting_eq2} is trivially true; therefore, in the remainder of the proof we will assume $s>0$.

Let $X=\{x_z\}_{z\in \epsilon\Z^d}$ be the equidistributed family of points from \eqref{eq:coeff_property_2a} and put
\begin{equation}
\label{eq:proof_lem_eigenvalue_lifting_eq0}
    S_{\rho,X}(L):=\bigcup_{z \in \epsilon \Z^d \, :\, z+s_\epsilon\in\Lambda_L }B_{\rho}(x_z)\,.
\end{equation}
    Formula \eqref{eq:coeff_property_2a} (with $\omega$ replaced by $\omega - s \mathbf{e}$ in the RHS) together with \eqref{eq:proof_lem_eigenvalue_lifting_eq0} and the Min-max Lemma imply that
%
%\begin{equation}
%\label{eq:proof_lem_eigenvalue_lifting_eq1}
%    \lambda_k(\cA_{\omega-s\mathbf{e}, L}^\epsilon) \ge E_k^L\left(a_{\omega}^\epsilon+\alpha_\epsilon \,s \,\chi_{S_{\frac{\epsilon^\gamma}{10}s,X}(L)}\right)\,.
%\end{equation}
%
\begin{equation*}
\label{eq:proof_lem_eigenvalue_lifting_eq1}
    \lambda_k(\cA_{\omega-s\mathbf{e}, L}^\epsilon) 
    \ge 
    \lambda_k
    \left(
    \left.
    - \nabla
    (
    a_{\omega}^\epsilon+\alpha_\epsilon \,s \,\chi_{S_{\frac{\epsilon^\gamma}{10}s,X}(L)}
    )
    \nabla
    \right|_{\Lambda_L}
    \right).
\end{equation*}

Hence, in order to prove \eqref{eq:lem_eigenvalue_lifting_eq2} it suffices to show that, for sufficiently small $s$, one has
\begin{equation}
\label{eq:proof_lem_eigenvalue_lifting_eq2}
    \lambda_k
    \left(
    \left.
        -\nabla
        (
        a_{\omega}^\epsilon+\alpha_\epsilon \,s \,\chi_{S_{\frac{\epsilon^\gamma}{10}s,X}(L)}
        )
        \nabla
        \right|_{\Lambda_L}
    \right)
   \ge
    \lambda_k(\cA_{\omega, L}^\epsilon)+s^\tau. 
\end{equation}
Although~\eqref{eq:proof_lem_eigenvalue_lifting_eq2} follows, in principle, rather easily from \cite[Lemma~4.2]{Dicke-21}, we provide a full proof in our setting for the reader's convenience, as the latter demonstrates the role played by our assumptions on the regularity of the diffusion coefficients.

The key ingredient is the quantitative unique continuation principle for gradients of eigenfunctions proved in \cite{DickeV-23}, in the form of \cite[Corollary~6.5]{DickeV-23}, which one would like to apply to the operator $\left.
        -\nabla
        (
        (
        a_{\omega}^\epsilon+\alpha_\epsilon \,s \,\chi_{S_{\frac{\epsilon^\gamma}{10}s,X}(L)}
        )
        \nabla
        )
        \right|_{\Lambda_L}$. Now, \cite[Corollary~6.5]{DickeV-23} requires the coefficients the operator to be Lipschitz. Our assumptions on the diffusion coefficients ensure that the contribution from $a_\omega^\epsilon$ is such. Furthermore, it is not hard to see that one can find a Lipschitz continuous function $g$ satisfying
        \begin{equation}
        \label{eq:proof_lem_eigenvalue_lifting_eq2bis}
        \alpha_\epsilon \,s \,\chi_{S_{\frac{\epsilon^\gamma}{20}s,X}(L)}
        \le
        g
        \le
        \alpha_\epsilon \,s \,\chi_{S_{\frac{\epsilon^\gamma}{10}s,X}(L)}
        \end{equation}
        with Lipschitz constant of the order of $\alpha_\epsilon \epsilon^{-\gamma}$. 
        Then, upon resorting to the Min-max Lemma once more with account of \eqref{eq:proof_lem_eigenvalue_lifting_eq2bis}, \cite[Corollary~6.5]{DickeV-23} applied to the operator $\left.
        -\nabla
        (
        (a_{\omega}^\epsilon+g)
        \nabla
        )
        \right|_{\Lambda_L}$ yields
% \footnote{The correspondence between \cite[Lemma~4.2]{Dicke-21}'s and our notation is as follows:
% \[
% B\mapsto a_\omega^\epsilon\mathrm{I},
% \qquad 
% W\mapsto \alpha_\epsilon \,s \,\chi_{S_{\frac{\epsilon^\gamma}{10}s,X}(L)}\mathrm{I}, \qquad \delta\mapsto\frac{\epsilon^\gamma}{10}s, \qquad \eta\mapsto 10\alpha_\epsilon\epsilon^{-\gamma}\,.
% \]}
\begin{multline}
\label{eq:proof_lem_eigenvalue_lifting_eq3}
    \lambda_k
    \left(
    - \nabla
    (
    a_{\omega}^\epsilon+t\alpha_\epsilon \,s \,\chi_{S_{\frac{\epsilon^\gamma}{10}s,X}(L)}
    \nabla
    )
    \mid_{\Lambda_L}
    \right)
\ge
\lambda_k(\cA_{\omega, L}^\epsilon)
+
t
\tilde E_-^2 \alpha_\epsilon \,s\left(\frac{\epsilon^{\gamma-1}}{40}s \right)^{N(1+\tilde E_+^{2/3})}
\\
=
\lambda_k(\cA_{\omega, L}^\epsilon)
+
t\,\mathfrak{c}_\epsilon\,
s^{N(1+\tilde E_+^{2/3})+1}
\end{multline}
for all  $t\in[0,1]$ and some constants $N>0$ depending on $\epsilon$, $\tilde E_+$ and $\tilde E_-$. In \eqref{eq:proof_lem_eigenvalue_lifting_eq3} we defined $\mathfrak{c}_\epsilon:=\tilde E_-^2 \alpha_\epsilon \left(\frac{\epsilon^{\gamma-1}}{40} \right)^{N(1+\tilde E_+^{2/3})}$. Recalling~\eqref{eq:alpha_epsilon} and the fact that $\epsilon\in(0,1)$, 
by choosing $N$ sufficiently large, one can make $\mathfrak{c}_\epsilon$ arbitrarily small, and certainly smaller than $1$. Therefore, for $0< s\le s_0<1$, one has
\[
% s\le s_0<1 \Leftrightarrow |\ln s|\ge |\ln s_0| \Leftrightarrow \frac{1}{|\ln s|} \le \frac{1}{|\ln s_0|}
% \Leftrightarrow s^\frac{1}{|\ln s|} \ge s^\frac{1}{|\ln s_0|}
% \Leftrightarrow 
(s^\frac{1}{|\ln s|})^{|\ln \mathfrak{c}_\epsilon|} \ge (s^\frac{1}{|\ln s_0|})^{|\ln \mathfrak{c}_\epsilon|}
\]
so that
\begin{equation}
\label{eq:proof_lem_eigenvalue_lifting_eq4}
    \mathfrak{c}_\epsilon= s^{\frac{|\ln\mathfrak{c}_\epsilon|}{|\ln s|}}\ge s^{C(\epsilon,d,\tilde E_\pm)}\,.
\end{equation}
By combining \eqref{eq:proof_lem_eigenvalue_lifting_eq3} and \eqref{eq:proof_lem_eigenvalue_lifting_eq4} one obtains \eqref{eq:proof_lem_eigenvalue_lifting_eq2} for an appropriate choice of a sufficiently large $\tau$, thus proving the claim.
\end{proof}

Equipped with the above result, we can move on to the proof of the Wegner estimate.

\begin{proof}[Proof of Theorem~\ref{thm:Wegner}]
Let us fix $0<E_-<E_+<+\infty$. 

Let $\rho_\delta\in C^\infty(\R^d,[-1,0])$, $\delta>0$, be a non-decreasing function such that:
\begin{enumerate}[(i)]
    \item $\rho_\epsilon=-1$ for $x\le \delta$, $\rho=0$ for $x\ge \delta$;
    \item $\|\rho'\|_\infty\le \delta^{-1}$;
    \item we have 
    \begin{equation}
    \label{eq:cutoff rho support}
        \chi_{[E-\delta,E+\delta]}(x)\le \rho(x-(E-2\rho))-\rho(x-(E+2\rho)) \le \chi_{[E-3\delta,E+3\delta]}(x)
    \end{equation} 
    for all $x\in \R^d$.
\end{enumerate}
Here we assume $\delta$ to be sufficiently small (a more explicit constraint will be imposed later on) and fixed, and $E\in (E_-,E_+)$ to be such that $\min_{\aleph\in\{+,-\}}\mathrm{dist}(E,E_\aleph)<3\delta$.
Note that middle quantity in the inequality \eqref{eq:cutoff rho support} has compact support as a function of $x$.

\begin{center}
\begin{tikzpicture}
    \begin{scope}[xshift = 0cm]

    \draw[->] (-2,0) -- (2.25,0);
    \draw[->] (0,-1.25) -- (0,1.25);

    \draw (-.05,-1) -- (0.05,-1);
    \draw (-0.35,-1) node {\tiny $-1$};

    \draw (-.75,-.05) -- (-.75,.05);
    \draw (-.75,.25) node {\tiny $\epsilon$};

    \draw (.8,-.05) -- (.8,.05);
    \draw (.8,.25) node {\tiny $\epsilon$};

    \draw (2.25,.2) node {\tiny $x$};

    \draw[very thick, rounded corners = 7pt, teal]
    (-2,-1) -- (-.7,-1) -- (0,-.5) -- (.7,0) -- (2,0);

    \draw (1.5,-.5) node {\tiny$\color{teal}\rho(x)$};
    \end{scope}

    \begin{scope}[xshift = 4cm]

    \draw[->] (-.25,0) -- (4.25,0);
    \draw[->] (0,-1.25) -- (0,1.25);

    \draw (-.05,-1) -- (0.05,-1);
    \draw (-0.35,-1) node {\tiny $-1$};

    \draw (-.05,1) -- (0.05,1);
    \draw (-0.35,1) node {\tiny $1$};    

    \draw (.8,-.05) -- (.8,.05);
    
    \draw (1.2,-.05) -- (1.2,.05);
    \draw[thin, dashed] (1.2,-1) -- (1.2,1);    
    \draw (1.2,1.25) node {\tiny $E - 2 \epsilon$};

    \draw (1.6,-.05) -- (1.6,.05);
    
    \draw (2,-.05) -- (2,.05);
    \draw[thin, dashed] (2,-1) -- (2,1);    
    \draw (2,1.25) node {\tiny $E$};

    \draw (2.4,-.05) -- (2.4,.05);

    \draw (2.8,-.05) -- (2.8,.05);
    \draw[thin, dashed] (2.8,-1) -- (2.8,1);
    \draw (2.8,1.25) node {\tiny $E + 2 \epsilon$};    

    \draw (3.2,-.05) -- (3.2,.05);

    \draw (4.25,.2) node {\tiny $x$};

    \draw[very thick, rounded corners = 5pt, blue]
    (-0,0) -- (.8,0) -- (1.6,1) -- (2.4,1) -- (3.2,0) -- (4,0);

    \draw[very thick, rounded corners = 5pt, red]
    (-0,-1) -- (.8,-1) -- (1.6,0) -- (4,0);

    \draw[very thick, rounded corners = 5pt, orange]
    (-0,-1) -- (2.0,-1) -- (2.8,0) -- (4,0);    

    \draw[anchor = west] (4.5,.75) node {\tiny \color{blue}$\rho(x-E+2\epsilon) - \rho(x-E-2\epsilon)$};
    
    \draw[anchor = west] (4.5,0) node {\tiny \color{red}$\rho(x-E+2\epsilon)$};

    \draw[anchor = west] (4.5,-.75) node {\tiny \color{orange}$\rho(x-E-2\epsilon)$};

    \end{scope}

\end{tikzpicture}
 
\end{center}

The inequality~\eqref{eq:cutoff rho support} implies
\begin{multline}
\label{eq:proof Wegner eq 1}
     \E
        \left[
            \operatorname{Tr}
            \left[
               \chi_{[E-\delta, E+\delta]}
                (
                \cA_{\omega, L}^\epsilon
                )
            \right]
        \right]
        \\
        \le
        \E\left[
        \sum_{k\in \N} \left[\rho(\lambda_k(\cA_{\omega, L}^\epsilon)-(E-2\rho))-\rho(\lambda_k(\cA_{\omega, L}^\epsilon)-(E+2\rho))\right]
        \right].
\end{multline}
Hence, the task at hand reduces to estimating the sum in the RHS of~\eqref{eq:proof Wegner eq 1}. 

We observe that the latter sum is, in fact, finite. Indeed, because of~\eqref{eq:cutoff rho support}, only eigenvalues $\lambda_k(\cA_{\omega, L}^\epsilon)$ such that
\begin{equation}
    \label{eq:proof Wegner eq 2}
    \lambda_k(\cA_{\omega, L}^\epsilon) \in [E-3\delta,E+3\delta]\subset[E_-,E_+]
\end{equation}
give a nonzero contribution to the RHS of~\eqref{eq:cutoff rho support}.

We would like to find an upper bound on $\lambda_k(\cA_{\omega-s_0 \mathbf{e}, L}^\epsilon)$ for sufficiently small $s_0 > 0$. In other words, we would like to establish how much individual eigenvalues can lift if one simultaneously decreases the radii of all inclusions by $s_0$.
For this purpose, note that by assumption we have
\begin{equation*}
\label{eq:proof Wegner eq 3}
    \lambda_k(- \epsilon^2 \Delta \mid_{\Lambda_L})
    \leq
\lambda_k(\cA_{\omega-s_0 \mathbf{e}, L}^\epsilon)
    \leq 
    E_+\,,
\end{equation*}
whence, by (inverse) Weyl's asymptotics, for all $k$ contributing to the sum in \eqref{eq:proof Wegner eq 2} we obtain 
\begin{equation}
\label{eq:proof Wegner eq 4}
    k \leq K_1 \left( \frac{E_+}{\epsilon} \right)^{\frac{d}2} L^d\,,
\end{equation}
$K_1$ being a universal constant.
At the same time, denoting by $\Delta_L$ the (negative) Laplacian on $\Lambda_L$ with Dirichlet boundary conditions and resorting to Weyl's asymptotics once again, we get
\begin{equation}
\label{eq:proof Wegner eq 5}
    \lambda_k(\cA_{(\omega_--s_0) \mathbf{e}, L}^\epsilon)
    \overset{\eqref{eq:coeff property 1}}{\leq}
    \lambda_k( - \Delta_L)
    \leq
    K_2 \frac{k^{\frac2d}}{L^2}
    \overset{\eqref{eq:proof Wegner eq 4}}{\leq}
    \frac{K_1^{2/d} K_2 }{\epsilon} E_+\,,
\end{equation}
where $K_2$ is another universal constant.
All in all, we conclude
\begin{equation*}
\label{eq:proof Wegner eq 6}
     \tilde E_-
    :=
    E_-
    \leq
    \lambda_k(\cA_{\omega, L}^\epsilon)
    \leq
    \lambda_k(\cA_{\omega-s_0 \mathbf{e}, L}^\epsilon)
    \leq
    \frac{K_1^{2/d} K_2 }{\epsilon} E_+
    =:
    \tilde E_+.
\end{equation*}
This allows one to apply Lemma~\ref{lem:eigenvalue_lifting_reformulated} with $s = (4 \rho)^{1/\tau}$, where $\tau$ is the parameter from Lemma~\ref{lem:eigenvalue_lifting_reformulated} (depending on $E_+, E_-$ and thus also on $\epsilon$), and we conclude
\begin{equation}
\label{eq:proof Wegner eq 7}
    \lambda_k(\cA_{\omega- s \mathbf{e}, L}^\epsilon)
    \geq
    \lambda_k(\cA_{\omega, L}^\epsilon)+4\rho.
\end{equation}
for all $\rho\le \rho_0$ and $0<s\le s_0$, where $\rho_0:=\tfrac{s_0^\tau}4$, $s_0$ being the second parameter from Lemma~\ref{lem:eigenvalue_lifting_reformulated}. Note that $0<\rho\le\rho_0<s_0<1$, as one can easily establish by direct inspection. 

Formulae~\eqref{eq:cutoff rho support} and~\eqref{eq:proof Wegner eq 7} then imply
\begin{multline}
    \label{eq:proof_Wegner_eq_3}
    \E\left[
        \sum_{k\in \N} \left[\rho(\lambda_k(\cA_{\omega, L}^\epsilon)-(E-2\rho))-\rho(\lambda_k(\cA_{\omega, L}^\epsilon)-(E+2\rho))\right]
        \right]
        \\
        \le
        \E\left[
        \sum_{k\in \N} \left[ \rho(\lambda_k(\cA_{\omega- s \mathbf{e}, L}^\epsilon)-(E+2\rho))-\rho(\lambda_k(\cA_{\omega, L}^\epsilon)-(E+2\rho)) \right]
        \right].
\end{multline}
% [\emph{MC: similarly, one could deal with the "second $\rho$", resulting with $+s\mathbf{e}$ instead. Nothing really changes.}]

The rest of the proof consists in estimating the RHS of~\ref{eq:proof_Wegner_eq_3} from above, and retraces arguments from~\cite[Theorem~3.1]{Dicke-21}, see also~\cite{HundertmarkKNSV-05, NakicTTV-18}:
\begin{itemize}
    \item First, one notes that, by a Weyl-type bound, since $\operatorname{supp} \rho \subset ( - \infty, \delta] \subset ( - \infty, \delta_{\max}]$, only $\sim L^d$ many $k \in \N$ can contribute to the sum in the RHS of~\eqref{eq:proof_Wegner_eq_3}.
    \\
    \textbf{This yields a factor $L^d$ in the upper bound.}
    \item
    The differences in the RHS of~\eqref{eq:proof_Wegner_eq_3} are between a function of an eigenvalue of the operator $\cA_{\omega, L}^\epsilon$, that is the standard configuration, and a function of an eigenvalue of $\cA_{\omega- s \mathbf{e}, L}^\epsilon$, that is the configuration where all $\sim L^d$ many inclusion in $\Lambda_L$ are shrunk. 
    By changing the radius of one inclusion at a time, the sum can be recast as a telescopic sum of $\sim L^d$ many differences of functions of eigenvalues of operators differing in only one site.
    \\
    \textbf{This yields another factor $L^d$ in the upper bound.}
    \item
    In each of these differences, we first evaluate the expectation with respect to the one site where the change takes place. 
    This is a one-dimensional integral over an interval of length $s = \rho^{1/\kappa}$.
    Using an integration-by-parts argument as in~\cite[Proof of Lemma~4.5]{Dicke-21} and the boundedness of the probability density $\eta_\mu$, the latter integral can be estimated from above by a term proportional to $\rho^{1/\kappa}$.
    \\
    \textbf{This will contribute a factor $\rho^{1/\kappa}$ in the upper bound}.
    \item 
    The expectation over the remaining random variables are estimated uniformly, leveraging on the boundedness of $\rho$, which concludes the proof.
    \qedhere
    \end{itemize}
\end{proof}

\section{Proof of initial scale estimate}
\label{Initial scale estimate under the strong assumption}

In this section, we prove our initial scale estimate, that is Theorem~\ref{thm:ISE_strong}.
We emphasise that we rely on Assumption~\ref{assumption:thinness_density} on thinness of the density $\nu_\mu$ of the random variables near their maximum.
Indeed, if one had more detailed information on the behaviour of the quantitative unique continuation principle in~\cite{DickeV-23} under scaling (more precisely, if the dependence of the constant $N$ in~\cite[Corollary 6.2]{DickeV-23} on $G \vartheta_{\mathrm{Lip}}$ were explicit and polynomial) this would allow one to resort to more modern techniques as used in~\cite{SeelmannT-20} and drop Assumption~\ref{assumption:thinness_density}.

\begin{proof}[Proof of Theorem~\ref{thm:ISE_strong}]
    We follow a strategy similar to that in~\cite{BarbarouxCH-97, KirschSS-98}.
    
    For $0<s<\omega_+-\omega_-$ and $L \in \epsilon \N $, consider the event
    \begin{equation}
    \label{eq:proof_ISE_strong_eq1}
        A_{s,L}
        :=
        \left\{
            \omega \in \Omega
            \ \colon\
            \omega_z < \omega_+ - s
            \ \text{for all}
            \
            z \in \Z^d 
            \
            \text{with}
            \
            \epsilon z+s_\epsilon \in \Lambda_L
        \right\}.
    \end{equation}
    Using Assumption~\ref{assumption:thinness_density}, one can estimate the probability of the event~\eqref{eq:proof_ISE_strong_eq1} as
    \begin{multline}
    \label{eq:proof_ISE_strong_eq2}
    \PP
    \left[
        A_{s,L}
    \right]
    =
    1 - \PP
    \left[
        \bigcup_{\epsilon z+s_\epsilon \in \Lambda_L}
        \left\{\omega \in \Omega \ \colon \
            \omega_z \geq \omega_+ - s
        \right\}
    \right]
    \\
    \geq
    1 
    -
    \# 
    \left\{
        z \in \Z^d \colon \epsilon z +s_\epsilon \in \Lambda_L
    \right\}
    \cdot
    \PP \left[\{\omega\in \Omega\ \colon \ \omega_0 \geq \omega_+ - s\}\right]
    \\
    \geq
    1
    -
    \left(
        \frac{L}{\epsilon}
    \right)^d
    s^\kappa\,,
    \end{multline}
    where the constant $\kappa>0$ will be chosen later.

    In light of Remark~\ref{rem:actually_equality} the restrictions $\cA_{\omega, L}^\epsilon$ have no spectrum in gaps of $\cA_\omega^\epsilon$, almost surely.    
    In particular, since $E_0$ is a lower spectral band edge of $\cA_{\omega}^\epsilon$, there exists a unique $k \in \N$ (depending on $L$) such that $\lambda_k(\cA_{\omega, L}^\epsilon) = \inf \{ \sigma(\cA_{\omega, L}^\epsilon) \cap [E_0, \infty)\}$ for all $\omega \in \Omega$.
 
    Furthermore, there exist a positive real number $E_{\mathrm{max}} > E_0$ and an $\tilde L_{\mathrm{max}}>0$ such that 
    \begin{equation}
       \label{eq:proof_ISE_strong_eq3}
\lambda_k(\cA_{\omega, L}^\epsilon) \in [E_0, E_\mathrm{max}] \qquad \text{for all $L>\tilde L_{\mathrm{min}}$ and all $\omega \in \Omega$,}
\end{equation}
        as one can establish by a similar Weyl type argument as in~\eqref{eq:proof Wegner eq 5}.
    In particular, for $L>\tilde L_{\mathrm{min}}$ we have that
\begin{equation} 
\label{eq:proof_ISE_strong_eq4}
    \lambda_k(\cA_{\omega,L}^\epsilon)
    \geq
    \lambda_k(\cA_{\omega_+ \, \mathbf{e},L}^\epsilon)
    +
    L^{-C_3} 
    \quad
    \Rightarrow
    \quad 
    \sigma(\cA_{\omega,L}^\epsilon) \cap [E_0, E_0 + L^{-C_3}] = \emptyset\,,
\end{equation}
because $\lambda_k(\cA_{\omega_+ \mathbf{e},L}^\epsilon)\in [E_0,E_\mathrm{max}]$ by~\eqref{eq:proof_ISE_strong_eq3} and $\lambda_k(\cA_{\omega,L}^\epsilon)$ is the smallest eigenvalue of $\cA^\epsilon_{\omega,L}$ in $[E_0,+\infty)$.

Thanks to Lemma~\ref{lem:eigenvalue_lifting_reformulated}, there exist $\tau > 0$ and $0<s_0<\omega_+-\omega_-$ such that
    \[
    \lambda_k(\cA_{(\omega_+ - s) \, \mathbf{e},L}^\epsilon)
    \geq
    \lambda_k(\cA_{\omega_+ \, \mathbf{e},L}^\epsilon)
    +
    s^\tau \qquad \forall s \in[0,s_0]\,.
    \]
    Thus, for $\omega \in A_{s,L}$ and $s\in[0,s_0]$ we have
    \begin{equation} 
    \label{eq:proof_ISE_strong_eq5}
    \lambda_k(\cA_{\omega,L}^\epsilon)
    \geq
    \lambda_k(\cA_{(\omega_+ - s) \, \mathbf{e},L}^\epsilon)
    \geq
    \lambda_k(\cA_{\omega_+ \, \mathbf{e},L}^\epsilon)
    +
    s^\tau\,.
    \end{equation} 
    Formulae~\eqref{eq:proof_ISE_strong_eq2}, \eqref{eq:proof_ISE_strong_eq4} and~\eqref{eq:proof_ISE_strong_eq5} then imply
    \begin{equation} 
    \label{eq:proof_ISE_strong_eq6}
    \PP
    \left[
        \sigma(\cA_{\omega,L}^\epsilon)
        \cap
        [E_0, E_0 + s^\tau]
        =
        \emptyset
    \right]
    \geq
    \PP [A_{s, L}]
    \geq
    1
    -
    \left(
        \frac{L}{\epsilon}
    \right)^d
    s^\kappa.
    \end{equation}
    By choosing $L_{\mathrm{min}}\ge\tilde L_{\mathrm{min}}$ sufficiently large depending on $\epsilon$, $s_0$, $C_3$ and $\tau$, one can ensure that
    $\epsilon^{-d}\le L$ and that there is a unique $s_L\in[0,s_0]$ such that $s_L= L^{-C_3/\tau}$ for all $L\ge L_{\mathrm{min}}$. Formula~\eqref{eq:proof_ISE_strong_eq6} with $s=s_L$ then implies
    \[
    \PP
    \left[
        \sigma(\cA_{\omega,L}^\epsilon)
        \cap
        [E_0, E_0 + L^{-C_3}]
        \neq
        \emptyset
    \right]    
        \leq
    \epsilon^{-d}
    L^{d +1- \frac{\kappa C_3}{\tau}} \qquad \forall L\ge L_{\mathrm{min}}.
    \]
    By choosing $\kappa$ such that
    \[
    d +1- \frac{\kappa C_3}{\tau}
    <
    - C_4
    \quad
\Leftrightarrow
    \quad
    \kappa 
    >
    \frac{\tau (d+1 + C_4)}{C_3}
    \]
    we arrive at~\eqref{eq:ISE_strong_eq2}. 
\end{proof}

\section{Multiscale analysis}
\label{sec:MSA}

The purpose of this section is to provide the proof of Theorem~\ref{thm:HS_kernel_decay}, that is to show that the initial scale estimate, Theorem~\ref{thm:ISE_strong}, and the Wegner estimate, Theorem~\ref{thm:Wegner}, imply strong sub-exponential Hilbert-Schmidt kernel decay in the sense of Theorem~\ref{thm:HS_kernel_decay} and thus dynamical and Anderson localization in the sense of Theorems~\ref{thm:Dynamical_localization} and~\ref{thm:Anderson_localization}.

For this, we shall resort to the Bootstrap Multiscale Analysis.
Before citing~\cite[Theorem and 3.8]{GerminetK-01}, we have to introduce a number of assumptions on the random family of operators $(\cA_{\omega}^\epsilon)_{\omega \in \Omega}$ and their restrictions $\cA_{\omega,L,x}^\epsilon$ to boxes $\Lambda_L(x)$ with Dirichlet boundary conditions, and verify that these are fulfilled.

Recalling that $\Lambda_L(x)$ denotes the box $x + (0,L)^d$, and $s_L = (\frac{L}{2}, \dots, \frac{L}{2}) \in \R^d$, we shall adopt the following definitions and notation.
\begin{definition}[Boundary of boxes]
    \label{def:Gamma}
    Given $x \in \epsilon \Z^d$ and $L\in \epsilon \N$, we define
    $\Gamma_{L}(x)$ as the inner boundary belt of thickness $\epsilon$ and at distance $\epsilon/2$ to the boundary of $\Lambda_L(x)$, that is
        \[
    \Gamma_L(x)
    :=
    \overline{\Lambda_{L - \epsilon}(x + s_{\epsilon})} 
    \backslash  
    \Lambda_{L-3 \epsilon} \left(x + s_{3 \epsilon}
    \right),
    \]
    see Figure~\ref{fig:Lambda_and_Gamma} for an illustration.
\end{definition}

\begin{figure}
    \centering
\begin{tikzpicture}
  \begin{scope}   
    
    \fill[even odd rule, pattern = north east lines] 
    (.5,.5) rectangle (4.5,4.5)
    (1.5,1.5) rectangle (3.5,3.5);
    \draw[thick, dashed] (0,0) rectangle (5,5);

    \draw[thick, fill = white] (0,0) circle (2pt);  
    \draw (0,-.25) node {$x$};
    \draw[thick, fill = white] (.5,.5) circle (2pt);
    \draw (1,.25) node {$x + s_\epsilon$};
    
    \draw[thick, fill = white] (1.5,1.5) circle (2pt);
    \draw[anchor = west] (1.5,1.75) node {$x + s_{3 \epsilon}$};     
    \end{scope}

    \begin{scope}[xshift = 6cm]
    \fill[pattern = north east lines]  (0,.5) rectangle (2,1.5);
    \draw[thick, dashed] (0,2.5) rectangle (2,3.5);

    \draw[anchor = west] (2,3) node {$\Lambda_L(x)$};
    \draw[anchor = west]  (2,1) node {$\Gamma_L(x)$};

    \end{scope}
\end{tikzpicture}

\caption{Illustration of the sets $\Lambda_L(x)$ and $\Gamma_L(x)$ in Definition~\ref{def:Gamma}.} \label{fig:Lambda_and_Gamma}   
\end{figure}
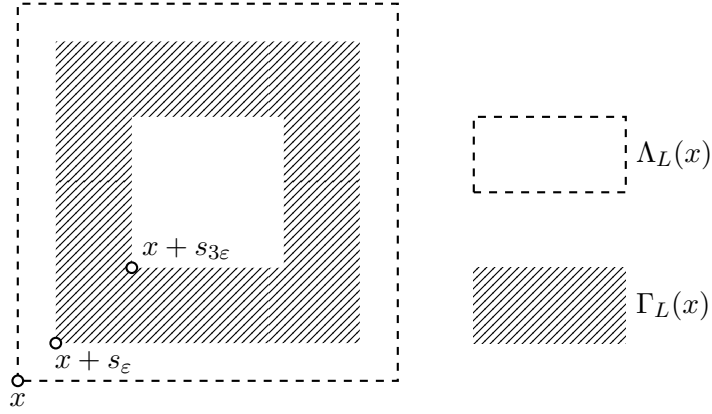

\begin{definition}[Simon-Lieb inequality (SLI)]
    We say that the family of operators $(\cA_\omega^\epsilon)_{\omega \in \Omega}$ satisfies the \emph{Simon-Lieb inequality} in $I_0 \subset \R$ if there exists $\gamma_{I_0} < +\infty$ such that for all $L, \ell', \ell'' \in 2 \epsilon \N$ and $x,y,y' \in \Z^d$ with $\overline{\Lambda_{\ell''}(y)} \subset \Lambda_{\ell'}(y')$ and $\overline{\Lambda_{\ell'}(y')} \subset \Lambda_{L}(x)$, and for all $E \in I_0$ and $\omega \in \Omega$ such that $E \not \in \sigma(\cA_\omega^\epsilon)$ we have
    \begin{align*}
        &\left\lVert
            \chi_{\Gamma_L(x)}
            (\cA_{\omega, L,x}^\epsilon - E)^{-1}
            \chi_{\Lambda_{\ell''(y)}}
        \right\rVert
        \\
        \leq
        &\gamma_{I_0}
\,
        \left\lVert
            \chi_{\Gamma_{\ell'}(y')}
            (\cA_{\omega, \ell',y'}^\epsilon - E)^{-1}
            \chi_{\Lambda_{\ell''(y)}}
        \right\rVert
        \left\lVert
            \chi_{\Gamma_L(x)}
            (\cA_{\omega, L,x}^\epsilon - E)^{-1}
            \chi_{\Lambda_{\ell'(y')}}
        \right\rVert.
    \end{align*}
\end{definition}

The SLI is satisfied for our class or operators, cf.~\cite[Section~2.2.1]{GerminetK-01}.

\begin{definition}[Independence at distance (IAD)]
    We say that the family of operators $(\cA_\omega^\epsilon)_{\omega \in \Omega}$ satisfies \emph{Independence at distance} if there exists $\rho > 0$ such that restrictions boxes with distance larger or equal than $\rho$ are $\PP$-independent.
\end{definition}
It is straightforward to check that this is the case for our family of operators with, say, $\rho=2 \epsilon$.

\begin{definition}[A-priori bound on the number of eigenvalues (NE)]
    Wwe say that the family of operators $(\cA_\omega^\epsilon)_{\omega \in \Omega}$ satisfies an (a priori) \emph{bound on the number of eigenvalues} in $I_0 \subset \R$ if there exists a constant $C_{I_0}>0$ such that for almost all $\omega \in \Omega$, all $x \in \epsilon \Z^d$, and all $L \in \epsilon \N$ we have
    \[
    \#
    \left\{
        j \in \N 
        \colon
        \lambda_j(\cA_{\omega,L,x}^\epsilon) \in  I_0
    \right\}
    \leq
    C_{I_0}\, L^d.
    \]
\end{definition}

We observe that, since $- \epsilon^2 \Delta_{L,x}  \leq \cA_{\omega,L,x}^\epsilon \leq - \Delta_{L,x}$, where $\Delta_{L,x}$ denotes the Laplacian in $L^2(\Lambda_L(x))$ with Dirichlet boundary conditions, property (NE) for our family of operators follows from a straightforward Weyl estimate.

\begin{definition}[Wegner's estimate (W)]
    We say that the family of operators $(\cA_\omega^\epsilon)_{\omega \in \Omega}$ satisfies \emph{Wegner's estimate} in $I_0 \subset \R$ if there exist constants $C_W>0$ and $\kappa \in (0,1)$ such that for all $x \in \epsilon \Z^d$, $L \in \N$, $E \in I_0$, and $\delta > 0$ we have
    \[
    \PP
    \left[
        \dist(\sigma (\cA_{\omega,L,x}^\epsilon, E) \leq \delta
    \right]
    \leq
    C_W
    \,
    \delta^\kappa
    \,
    \lvert \Lambda_L \rvert^{b}.
    \]
\end{definition}

Assumption (W) follows from the Wegner's estimate in Theorem~\ref{thm:Wegner} with $b = 2$ and $\kappa = \frac{1}{C_2}$, see~\cite[Remark~2.3]{GerminetK-01}.

\begin{remark}
Note that in~\cite{GerminetK-01} the Wegner estimate is formulated with $\kappa = 1$.
However, with minor changes, the proof therein applies to the range of parameters $\kappa \in (0,1)$ as well, as per Remark~2.4 therein.
Indeed, retracing the proof of~\cite[Theorem~3.8]{GerminetK-01} (which we recall below as Theorem~\ref{thm:Bootstrap_MSA}) one realises that the parameter $\kappa$ will only influence the finite scale $\mathcal{L}_0(E)$, thus leaving the statement of the theorem unaffected.
\end{remark}

\begin{definition}[Eigenfunction decay inequality (EDI)]
We say that the family of operators $(\cA_\omega^\epsilon)_{\omega \in \Omega}$ satisfies the \emph{Eigenfunction decay inequality} if there exists a constant $\tilde \gamma_{I_0}<+\infty$ such that, given a generalised eigenfunction $\psi$ corresponding to a generalised eigenvalue $E\in I_0$, one has
\begin{equation*}
    \left\|\chi_{\Lambda_1(x)}\psi\right\|\le \tilde \gamma_{I_0} \,
    \left\|\chi_{\Gamma_L(x)}(\cA_{\omega,L,x}^\epsilon - E)^{-1}\chi_{\Lambda_1(x)}\right\|
    \left\|\chi_{\Gamma_L(x)}\psi\right\|
\end{equation*}
for all $x\in \epsilon \Z^d$ and $L\in \N$ such that $E\not \in \sigma(\cA_{\omega,L,x}^\epsilon)$.
\end{definition}

This is satisfied in our case as shown in~\cite{FigotinK-97}.

\begin{definition}[Strong generalised eigenvalue expansion (SGEE)]
We say that $(\cA_\omega^\epsilon)_{\omega \in \Omega}$ satisfies the \emph{strong generalised eigenvalue expansion} if the following holds.
Given $\nu>\frac{d}{4}$, and defining $T$ to be the operator of multiplication by the weight $(1+|x|^2)^{\nu}$ in $L^2(\R^d)$, then the set
\[
\mathcal{D}_\omega^+:=\left\{\psi\in \mathcal{D}(\cA_\omega^\epsilon)\cap L^2(\R^d, (1+|x|^2)^{2\nu} \mathrm{d}x)\ \vline \ \cA_\omega^\epsilon\psi\in L^2(\R^d, (1+|x|^2)^{2\nu} \mathrm{d}x)\right\}
\]
is dense in $L^2(\R^d, (1+|x|^2)^{2\nu} \mathrm{d}x)$ and is an operator core for $\cA_\omega^\epsilon$ almost surely. Furthermore, there exists a bounded continuous function $f$, strictly positive on $\sigma(\cA_\omega^\epsilon)$, such that
% \begin{equation}
%     \operatorname{tr}\left[ T^{-1}f(\cA_\omega^\epsilon)T^{-1} \right]<+\infty \qquad \text{a.s.}
% \end{equation}
% and
\begin{equation*}
    \label{eq:SGEE}
    \E\left[\left(\operatorname{tr}\left[ T^{-1}f\left(\cA_\omega^\epsilon\right)T^{-1} \right]\right)^2\right]< +\infty\,.
\end{equation*}
Here and further on, $\mathrm{tr}$ stands for operator trace.
\end{definition}

The fact that (SGEE) holds in our case follows from the (more general) \cite[Theorem~1.1]{KleinKS-02}.
Let us summarise the discussion so far in the form of a lemma as follows:

\begin{lemma}
\label{lem:multiscale_assumptions}
    For any compact interval $I_0 \subset \R$, the random family of operators $(\cA_{\omega}^\epsilon)_{\omega \in \Omega}$ satisfies (SLI), (IAD), (NE), (W), (EDI), and (SGEE).
\end{lemma}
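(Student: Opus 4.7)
The plan is to verify each of the six properties listed in turn, drawing on the discussion in the paragraphs immediately preceding the statement and specifying, where needed, which feature of our model ensures that the relevant results from the literature apply. I would organise the argument into three blocks: the properties that follow by direct inspection of our setup, those that follow from Theorem~\ref{thm:Wegner} and from simple Weyl-type bounds, and finally those that rely on external results for generalised Anderson Hamiltonians.

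First I would handle the easy properties. \emph{(IAD)} is immediate from the i.i.d.\ product structure of $\PP$ in~\eqref{P} combined with the local dependence of $a_\omega^\epsilon$: on a box $\Lambda_L(x)$ with $L \in \epsilon \N$, the coefficient $a_\omega^\epsilon|_{\Lambda_L(x)}$ depends only on those $\omega_z$ for which $\epsilon z + s_\epsilon \in \Lambda_L(x)$, so restrictions to boxes at distance at least $2 \epsilon$ are $\PP$-independent, giving $\rho = 2\epsilon$. \emph{(NE)} follows from the sandwich $- \epsilon^2 \Delta_{L,x} \leq \cA_{\omega,L,x}^\epsilon \leq - \Delta_{L,x}$ granted by~\eqref{eq:coeff property 1}, the min-max principle, and Weyl's law, yielding the bound $\# \{ j : \lambda_j(\cA_{\omega,L,x}^\epsilon) \in I_0 \} \leq C_{I_0}\, L^d$ with an $\epsilon$-dependent constant. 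For \emph{(W)}, I would invoke Theorem~\ref{thm:Wegner} directly: by Chebyshev's inequality, $\PP[\dist(\sigma(\cA_{\omega,L,x}^\epsilon), E) \leq \delta] \leq \E[\operatorname{Tr}[\chi_{[E-\delta, E+\delta]}(\cA_{\omega,L,x}^\epsilon)]]$, which by~\eqref{eq:Wegner} is controlled by a constant times $\delta^{1/C_2}\, L^{2d}$, thus yielding \emph{(W)} with $b = 2$ and $\kappa = 1/C_2 \in (0,1)$.

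For the remaining three properties I would appeal to existing literature, checking in each case that the relevant hypotheses are met. \emph{(SLI)} is the standard geometric resolvent identity, which for divergence-form second-order elliptic operators has been derived in~\cite[Section~2.2.1]{GerminetK-01} as a commutator estimate between the resolvent and a smooth cutoff function; it applies verbatim to our family thanks to the uniform ellipticity of $a_\omega^\epsilon$ for each fixed $\epsilon$. \emph{(EDI)} is a deterministic polynomial decay inequality for generalised eigenfunctions established in~\cite{FigotinK-97} for precisely the class of random divergence-form operators that contains ours. Finally, \emph{(SGEE)} combines polynomial boundedness of generalised eigenfunctions with trace-class properties in a weighted $L^2$ space, and is provided by~\cite[Theorem~1.1]{KleinKS-02}; the hypotheses of that result reduce to our coefficients being bounded, measurable, uniformly elliptic and jointly measurable in $(x,\omega)$, all of which hold for each fixed $\epsilon > 0$.

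I do not anticipate a genuine obstacle in the proof: all six properties are essentially available off-the-shelf for operators of the type considered here, and the task is merely to match our notation and hypotheses to those in the references. The only point that deserves some care is the $\epsilon$-dependence of the various constants --- the ellipticity deteriorates as $\epsilon \downarrow 0$ and the Lipschitz constant of $a_\omega^\epsilon$ blows up like $\epsilon^{-\gamma}$ --- but since $\epsilon$ is held fixed throughout the multiscale analysis this is entirely harmless, and it does not affect the qualitative conclusions required for the proof of Theorem~\ref{thm:HS_kernel_decay}.
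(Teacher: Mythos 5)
Your proposal is correct and follows essentially the same route as the paper: it verifies (IAD) and (NE) by direct inspection (i.i.d.\ product structure with local dependence, resp.\ the operator sandwich $-\epsilon^2\Delta_{L,x}\le\cA_{\omega,L,x}^\epsilon\le-\Delta_{L,x}$ plus Weyl), deduces (W) from Theorem~\ref{thm:Wegner} with $b=2$ and $\kappa=1/C_2$, and imports (SLI), (EDI), (SGEE) from~\cite{GerminetK-01}, \cite{FigotinK-97}, and~\cite[Theorem~1.1]{KleinKS-02} respectively, exactly as in the paper's preamble to the lemma. The remark about $\epsilon$-dependent constants is a useful clarification and is consistent with the paper's treatment.
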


We are now in a position to cite the following result.

\begin{theorem}[{Bootstrap Multiscale Analysis~\cite[Theorem~3.8]{GerminetK-01}}]
    \label{thm:Bootstrap_MSA}
    % Assume that the random family of operators $(\cA_{\omega}^\epsilon)_{\omega \in \Omega}$ satisfies (SLI), (IAD), (NE) and (W) in an open interval $\mathcal{I} \subset \R$, and additionally (EDI) and (SGEE).
    Let $\mathcal{I} \subset \R$ be open and $\theta > 2 d$.
    Then, for each $E \in \mathcal{I}$, there exists a finite scale $\mathcal{L}_\theta(E) = \mathcal{L}_\theta(E,d)$, bounded on compact subintervals of $\mathcal{I}$, such that, if for a given $\tilde E \in \Sigma(\mathcal{A}_\omega^\epsilon) \cap \mathcal{I}$ we have 
    \begin{equation*}   
    \label{eq:suitable}
    \PP
    \left[
        \left\lVert
        \chi_{\Gamma_{L_0}} (\cA_{\omega,L_0}^\epsilon - \tilde E)^{-1} \chi_{\Lambda_{\frac{L_0}{3}}}
        \right\rVert
        \leq
        \frac{1}{L_0^\theta}
    \right]
    >
    1
    -
    \frac{1}{841^d}
    \end{equation*}
for some $L_0 \in 6\epsilon\N$, $L_0 > \mathcal{L}_\theta(E)$, then there exists $\delta_0 > 0$ such that, defining $I(\delta_0) := [\tilde E - \delta_0, \tilde E + \delta_0]$, 
we find for every $\zeta \in (0,1)$ a constant $C_\zeta > 0$ such that
	\begin{equation*}
		\E
		\left[
			\sup_{\lVert f \rVert_\infty \leq 1}
			\left\lVert
				\chi_{\Lambda_1(x)} 
				\chi_{I(\delta_0)}(\cA_\omega^\epsilon) 
                f(\cA_\omega^\epsilon) 
				\chi_{\Lambda_1(y)}
			\right\rVert_{\mathrm{HS}}^2
		\right]
		\leq
		C_\zeta\,
		\mathrm{e}^{- \lvert x - y \rvert^\zeta}
	\end{equation*}
	for all $x,y \in \epsilon \Z^d$. The supremum is taken over all Borel-measurable functions and $\lVert\, \cdot\, \rVert_{\mathrm{HS}}$ denotes the Hilbert-Schmidt norm.
\end{theorem}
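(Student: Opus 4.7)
The plan is to prove Theorem~\ref{thm:Bootstrap_MSA} by a bootstrap multiscale analysis in the spirit of Germinet--Klein, leveraging on the six structural properties (SLI), (IAD), (NE), (W), (EDI) and (SGEE) verified in Lemma~\ref{lem:multiscale_assumptions}. The hypothesis furnishes a single-scale probabilistic resolvent estimate at the ``seed'' scale $L_0$, with probability bounded below by an explicit geometric constant $1 - 841^{-d}$ --- the specific value of $841 = 29^2$ being tuned to a $29$-fold covering of a box by subboxes used in the induction. The proof then proceeds by induction along a geometric sequence of scales $L_{k+1} = \lfloor L_k^\alpha \rfloor$ with $\alpha$ slightly larger than $1$, simultaneously increasing the scale and improving the quality of the probabilistic bound on the (random) event that the resolvent restricted to the inner belt $\Gamma_{L_k}(x)$ is small.

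The first stage is a standard Dreifus--Klein-type multiscale step: one covers $\Lambda_{L_{k+1}}$ by boxes of scale $L_k$, invokes (SLI) to chain resolvents across the cover, uses (IAD) to ensure that the ``goodness'' events associated to well-separated subboxes are independent, and then combines a combinatorial union bound with the Wegner estimate (W) to guarantee that with probability at least $1 - L_{k+1}^{-p'}$ (a) at most very few subboxes are bad and (b) $\tilde E$ does not lie too close to $\sigma(\cA_{\omega,L_{k+1}}^\epsilon)$. This delivers polynomial probability decay on the bad event at every scale. The second and third stages of the bootstrap --- its main conceptual heart --- iterate the same geometric construction but recycle the \emph{previously established} probability estimate as the input for the combinatorial union bound. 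This upgrade progressively transforms polynomial decay first into stretched-exponential decay and finally into nearly exponential decay $\PP[\text{bad at scale } L] \le \mathrm{e}^{-L^\zeta}$ for every $\zeta \in (0,1)$, within a small energy interval $I(\delta_0)$ around $\tilde E$.

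The main obstacle is the bookkeeping of these three stages: one has to track carefully the interplay between the scale growth exponent $\alpha$, the polynomial order $\theta$ in the resolvent decay, the Wegner exponent $\kappa$ from Theorem~\ref{thm:Wegner}, the independence distance, and the number of subboxes in each cover, ensuring at each inductive step that the suppression gained from the previous stage dominates the combinatorial factors. The finite scale $\mathcal{L}_\theta(E)$ in the statement is precisely the threshold beyond which these competing powers cooperate; verifying its local boundedness in $E$ is routine but tedious. A subtle point is that the Wegner exponent $\kappa = 1/C_2$ produced by Theorem~\ref{thm:Wegner} is strictly smaller than $1$, so the induction has to be set up (following Remark~2.4 of~\cite{GerminetK-01}) so that $\kappa < 1$ only affects the threshold $\mathcal{L}_\theta(E)$ and not the form of the conclusion.

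Once the near-exponential probabilistic resolvent bound is established on $I(\delta_0)$, the passage to Hilbert--Schmidt-kernel decay of $\chi_{\Lambda_1(x)} \chi_{I(\delta_0)}(\cA_\omega^\epsilon) f(\cA_\omega^\epsilon) \chi_{\Lambda_1(y)}$ is carried out by combining (SGEE) with (EDI): (SGEE) provides a generalised eigenfunction expansion with trace-class weights, so that the operator above can be expressed as a sum over generalised eigenfunctions with generalised eigenvalues in $I(\delta_0)$; (EDI) then transforms the probabilistic resolvent bound into a pointwise decay bound for each such eigenfunction, uniform in the Borel-measurable function $f$ with $\lVert f \rVert_\infty \le 1$. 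Taking expectations, exploiting the square-integrability in $\omega$ provided by (SGEE) and summing the geometrically decaying contributions yields the desired $C_\zeta \mathrm{e}^{-\lvert x-y \rvert^\zeta}$ bound, completing the proof.
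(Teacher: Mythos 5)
The paper does \emph{not} supply its own proof of this statement: it is explicitly imported as Theorem~3.8 of Germinet--Klein~\cite{GerminetK-01}, and the actual work in Section~\ref{sec:MSA} consists of verifying (in Lemma~\ref{lem:multiscale_assumptions}) that the hypotheses (SLI), (IAD), (NE), (W), (EDI), (SGEE) required by that theorem hold for the family $(\cA_\omega^\epsilon)_\omega$ --- plus a remark observing that the sub-unit Wegner exponent $\kappa = 1/C_2 < 1$ produced by Theorem~\ref{thm:Wegner} only shifts the threshold scale $\mathcal{L}_\theta(E)$ rather than altering the form of the conclusion, which is exactly the point you flag.

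Your sketch is an accurate high-level reconstruction of the Germinet--Klein bootstrap argument itself: the correct seed-scale probability threshold $1-841^{-d}$, the geometric growth $L_{k+1}\approx L_k^\alpha$, the r\^ole of (SLI) and (IAD) in the Dreifus--Klein step, the staged bootstrap upgrading polynomial to stretched-exponential decay of the bad-event probability, and the final passage to Hilbert--Schmidt kernel decay via (SGEE) and (EDI). So in terms of mathematical content you have essentially re-derived the cited theorem rather than replicated the paper's (much shorter) argument, which is ``check the hypotheses and cite.'' What the paper's route buys is brevity and a clean separation between model-specific verification and the general, already-proved MSA machinery; what your route would buy, if fully written out, is self-containedness. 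One caveat: because this theorem is the general abstract result, all dependence on the high-contrast structure of $\cA_\omega^\epsilon$ is supposed to be confined to the verification of the six hypotheses; your sketch at one point mentions ``the Wegner exponent $\kappa$ from Theorem~\ref{thm:Wegner}'' inside the MSA induction, which slightly blurs this separation --- the abstract theorem should refer only to the $\kappa$ appearing in (W), whose value is then specialised after the fact. This is cosmetic, not a gap.
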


Using compactness of the interval $\left[E_0, E_0 + \nicefrac{L_0^{-\nicefrac{1}{2}}}{2}\right]$ for $L_0 > 0$ and Lemma~\ref{lem:multiscale_assumptions}, we deduce:

\begin{corollary}
    \label{cor:Bootstrap_MSA}
    Let $E_0 \in \R$ and $\theta > 2 d$.
    If 
    \begin{equation}  
    \label{eq:limsup_suitability}
    \limsup_{L \to \infty}
    \
    \PP
    \left[
        \left\lVert
        \chi_{\Gamma_{L}} (\cA_{\omega,L}^\epsilon - E)^{-1} \chi_{\Lambda_{\frac{L}{3}}}
        \right\rVert
        \leq
        \frac{1}{L^\theta}
        \quad
        \text{for all}
        \quad
        E \in 
        \left[
        E_0, E_0 + \nicefrac{L_0^{-\nicefrac{1}{2}}}{2}
        \right]
    \right]
    =
    1\,,
    \end{equation}
    then there exists $L_0 \in \epsilon \N$ such that, defining $I_0 := \left[E_0, E_0 + \nicefrac{L_0^{-\nicefrac{1}{2}}}{2}\right]$, one can find for every $\zeta \in (0,1)$ a constant $C_\zeta > 0$ such that
	\begin{equation*}
		\E
		\left[
			\sup_{\lVert f \rVert_\infty \leq 1}
			\left\lVert
				\chi_{\Lambda_1(x)} 
				\chi_{I_0}(\cA_\omega^\epsilon) 
                f(\cA_\omega^\epsilon) 
				\chi_{\Lambda_1(y)}
			\right\rVert_{\mathrm{HS}}^2
		\right]
		\leq
		C_\zeta\,
		\mathrm{e}^{- \lvert x - y \rvert^\zeta}
	\end{equation*}
	for all $x,y \in \epsilon \Z^d$. The supremum is taken over all Borel-measurable functions and $\lVert\, \cdot\, \rVert_{\mathrm{HS}}$ denotes the Hilbert-Schmidt norm.
\end{corollary}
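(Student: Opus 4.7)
My plan is to derive Corollary~\ref{cor:Bootstrap_MSA} from Theorem~\ref{thm:Bootstrap_MSA} by patching together the pointwise conclusions of the Bootstrap Multiscale Analysis via a compactness argument on $I_0$. The structural hypotheses required by the Bootstrap MSA --- (SLI), (IAD), (NE), (W), (EDI), and (SGEE) --- are available on any compact energy window thanks to Lemma~\ref{lem:multiscale_assumptions}, so the only thing I need to do is verify that, for each energy in $I_0$, the finite-volume resolvent estimate required to start the MSA holds with sufficiently large probability.

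First, I would choose $L_0 \in 6 \epsilon \N$ large enough that (i) $L_0 > \sup_{E \in I_0} \mathcal{L}_\theta(E)$, which is finite because $\mathcal{L}_\theta$ is bounded on compact subintervals of $\mathcal{I}$, and (ii) thanks to the $\limsup$ hypothesis~\eqref{eq:limsup_suitability}, the probability that
\[
\left\lVert \chi_{\Gamma_{L_0}} (\cA_{\omega,L_0}^\epsilon - E)^{-1} \chi_{\Lambda_{L_0/3}} \right\rVert \leq L_0^{-\theta}
\quad \text{for all } E \in I_0
\]
exceeds $1 - 841^{-d}$. In particular, this bound holds simultaneously for \emph{every} individual $\tilde E \in I_0$, so the hypothesis of Theorem~\ref{thm:Bootstrap_MSA} is verified at each such $\tilde E$.

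Next, I would apply Theorem~\ref{thm:Bootstrap_MSA} pointwise: for every $\tilde E \in I_0 \cap \sigma(\cA_\omega^\epsilon)$ (the latter being almost-surely deterministic by ergodicity) we obtain a radius $\delta_0(\tilde E) > 0$ and constants $C_\zeta(\tilde E) > 0$ for which the strong sub-exponential Hilbert-Schmidt-kernel decay holds on $[\tilde E - \delta_0(\tilde E), \tilde E + \delta_0(\tilde E)]$. For $\tilde E \in I_0 \setminus \sigma(\cA_\omega^\epsilon)$, a sufficiently small neighbourhood of $\tilde E$ is disjoint from the spectrum, so the associated spectral projection vanishes and the required bound is trivial. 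This provides an open cover of $I_0$; by compactness I can extract a finite subcover and refine it to a finite Borel partition $I_0 = \bigsqcup_{j=1}^N J_j$ with each $J_j$ contained in one of the intervals supplied by Theorem~\ref{thm:Bootstrap_MSA}.

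Finally, I would combine the partition pieces by writing $\chi_{I_0}(\cA_\omega^\epsilon) = \sum_{j=1}^N \chi_{J_j}(\cA_\omega^\epsilon)$, noting that if $\lVert f \rVert_\infty \leq 1$ then also $\lVert \chi_{J_j} f \rVert_\infty \leq 1$, and applying the triangle inequality in Hilbert-Schmidt norm together with the elementary bound $(a_1 + \cdots + a_N)^2 \leq N (a_1^2 + \cdots + a_N^2)$ to absorb the $N$ contributions into a single constant $C_\zeta := N \max_j C_\zeta(\tilde E_j)$. The main obstacle I anticipate is the mildly circular nature of the setup, where the interval $I_0 = [E_0, E_0 + L_0^{-1/2}/2]$ depends on the very scale $L_0$ one is trying to pick; this is handled by observing that, for all candidate $L_0$ beyond some fixed threshold, the associated intervals $I_0$ lie inside a single compact subinterval of $\mathcal{I}$ on which $\mathcal{L}_\theta$ is already uniformly bounded, so the two requirements on $L_0$ can be fulfilled simultaneously.
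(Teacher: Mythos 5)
Your proof is correct and takes essentially the same approach as the paper, which itself only records the one-line hint ``using compactness of $[E_0, E_0 + L_0^{-1/2}/2]$ and Lemma~\ref{lem:multiscale_assumptions}, we deduce\ldots'' --- your argument is a faithful fleshing-out of that hint. You correctly resolve the circularity between $L_0$ and $I_0$ by observing that all candidate intervals sit inside a fixed compact subinterval on which $\mathcal{L}_\theta$ is bounded, apply Theorem~\ref{thm:Bootstrap_MSA} pointwise (handling off-spectrum energies trivially via ergodicity and an open cover), extract a finite Borel partition, and patch the estimates together with the triangle and Cauchy--Schwarz inequalities in Hilbert--Schmidt norm.
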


Let us emphasise that Theorem~\ref{thm:Bootstrap_MSA} and Corollary~\ref{cor:Bootstrap_MSA} rely on the knowledge that the assumptions listed in Lemma~\ref{lem:multiscale_assumptions} are fulfilled for the random family of operators $(\mathcal{A}_\omega^\epsilon)_{\omega \in \Omega}$.
% The event 
% \[
%     \left\lVert
%     \chi_{\Gamma_L} (\cA_{\omega,L}^\epsilon - E)^{-1} \chi_{\Lambda_{\frac{L}{3}}}
%     \right\rVert
%     \leq
%     \frac{1}{L^\theta}
% \]    
%     in~\eqref{eq:limsup_suitability} is also referred to as \emph{($\theta,E)$-suitability} of the box $\Lambda_L$.

\

    The proof of Theorem~\ref{thm:HS_kernel_decay} (and thus also of Theorems~\ref{thm:Anderson_localization} and~\ref{thm:Dynamical_localization}) is complete as soon as we have the following.

\begin{theorem}
    \label{thm:suitability_holds}
    Let $E_0 \in \R$ be a lower band edge of $\sigma(\mathcal{A}_\omega)$.
    Then~\eqref{eq:limsup_suitability} holds.
\end{theorem}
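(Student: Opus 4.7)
The plan is to combine the Initial Scale Estimate (Theorem~\ref{thm:ISE_strong}) with the deterministic spectral gap just below $E_0$ (provided by Assumption~\ref{assumption:epsilon_and_omega} together with Remark~\ref{rem:actually_equality}) and a Combes--Thomas-type resolvent estimate for divergence-form operators. The overall idea is that, with $\PP$-probability tending to $1$, the restricted operator $\cA_{\omega,L}^\epsilon$ has no spectrum in an interval of the form $[E_-, E_0 + L^{-1/2}]$, and such a quantitative spectral gap of size $\sim L^{-1/2}$ automatically yields super-polynomial decay of the resolvent between the interior block $\Lambda_{L/3}$ and the boundary belt $\Gamma_L$, a fortiori better than $L^{-\theta}$ for any fixed $\theta > 2d$.

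The first step is to invoke Theorem~\ref{thm:ISE_strong} with the choice $C_3 = \tfrac{1}{2}$ (so that the spectral window $[E_0, E_0 + L^{-1/2}/2]$ fits comfortably inside the gapped region provided by the ISE) and any $C_4 > 0$: this fixes $\kappa$ and an $L_{\min}$ such that the event
$$\mathcal{E}_L := \left\{\, \omega \in \Omega \ \colon \ \sigma(\cA_{\omega,L}^\epsilon) \cap [E_0, E_0 + L^{-1/2}] = \emptyset \,\right\}$$
has probability at least $1 - L^{-C_4}$ for all $L \geq L_{\min}$. Combining this with the $\PP$-almost sure identity $\sigma(\cA_{\omega,L}^\epsilon) \cap [E_-, E_0) = \emptyset$ (Remark~\ref{rem:actually_equality}), on $\mathcal{E}_L$ we obtain the uniform lower bound
$$\dist\bigl(E, \sigma(\cA_{\omega,L}^\epsilon)\bigr) \geq \min \left( E_0 - E_-, \tfrac12 L^{-1/2} \right) = \tfrac12 L^{-1/2}$$
for every $E \in [E_0, E_0 + L^{-1/2}/2]$ and every $L$ large enough that $\tfrac12 L^{-1/2} \leq E_0 - E_-$.

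With this quantitative gap at hand, the second step is to apply a Combes--Thomas estimate tailored to our divergence-form operator. Since $\epsilon^2 \leq a_\omega^\epsilon \leq 1$ with $\epsilon$ fixed, the standard exponential-conjugation argument yields a constant $c_\epsilon > 0$ such that, whenever $\dist(E, \sigma(\cA_{\omega,L}^\epsilon)) \geq \eta$,
$$\Bigl\lVert \chi_{\Gamma_L} (\cA_{\omega,L}^\epsilon - E)^{-1} \chi_{\Lambda_{L/3}} \Bigr\rVert \leq \frac{2}{\eta} \exp\bigl( -c_\epsilon \sqrt{\eta}\, \dist(\Gamma_L, \Lambda_{L/3}) \bigr).$$
Since the separation $\dist(\Gamma_L, \Lambda_{L/3})$ is of order $L$, plugging $\eta = \tfrac12 L^{-1/2}$ yields, uniformly in $E \in [E_0, E_0 + L^{-1/2}/2]$ and $\omega \in \mathcal{E}_L$, an upper bound of the form $C_\epsilon \, L^{1/2} \exp(-c'_\epsilon L^{3/4})$.

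Since this super-polynomial bound is eventually dominated by $L^{-\theta}$ for any fixed $\theta > 2d$, the event $\mathcal{E}_L$ is contained, for all $L$ sufficiently large, in the event appearing inside the probability in~\eqref{eq:limsup_suitability}. Hence that probability is at least $1 - L^{-C_4} \to 1$, which proves~\eqref{eq:limsup_suitability}. The main technical obstacle is to obtain a version of the Combes--Thomas bound valid for our random divergence-form operator on a finite box with Dirichlet boundary conditions and with constants which do not degenerate in $L$; this is standard once one verifies that the exponentially conjugated form remains accretive, but needs to be stated precisely. A minor secondary point is ensuring that the ``below'' half of the spectral gap holds $\PP$-almost surely and uniformly in $L$, which is exactly the content of Remark~\ref{rem:actually_equality} and so incurs no further probabilistic cost.
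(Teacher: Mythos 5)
Your proposal follows essentially the same route as the paper: invoke the initial scale estimate (with $C_3=\tfrac12$) to get, with probability at least $1-L^{-C_4}$, a quantitative gap of size $\sim L^{-1/2}$ above $E_0$; pair this with the deterministic gap below $E_0$ coming from Remark~\ref{rem:actually_equality}; and feed the resulting distance-to-spectrum into a Combes--Thomas resolvent estimate to obtain super-polynomial decay of $\|\chi_{\Gamma_L}(\cA_{\omega,L}^\epsilon-E)^{-1}\chi_{\Lambda_{L/3}}\|$, which dominates $L^{-\theta}$. This is exactly what the paper does.

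The only point worth flagging is your claim that ``the standard exponential-conjugation argument'' yields decay rate proportional to $\sqrt{\eta}$, where $\eta = \dist(E,\sigma(\cA_{\omega,L}^\epsilon))$. For energies in an \emph{internal} spectral gap the basic Combes--Thomas bound obtained from the conjugation argument is linear in $\eta$ (this is the form cited in the paper from Germinet--Klein~\cite{GerminetK-03}); the $\sqrt{\eta}$ improvement is specific either to energies below the infimum of the spectrum or to refined versions that exploit the full gap width (so one gets $\sqrt{\eta\cdot\Delta}$ with $\Delta$ the gap width). Since even the linear version $\exp(-c\,\eta\,L)\sim\exp(-c\,L^{1/2})$ is super-polynomial, the conclusion is unaffected; but the attribution of $\sqrt{\eta}$ to the ``standard'' argument is imprecise. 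Otherwise the proof is correct and complete.
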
    

The proof of Theorem~\ref{thm:suitability_holds} relies, in turn, on the initial scale estimate --- i.e., Theorem~\ref{thm:ISE_strong} --- in combination with the following Combes--Thomas estimate.

\begin{proposition}[{Combes--Thomas estimate~\cite[Theorem 2.1]{GerminetK-03}}]
    \label{prop:Combes_thomas}
    There exists a constant $C_{\mathrm{CT}} > 0$ depending only on the dimension such that, for all $L \in \epsilon \N$, all $x,y \in \epsilon \Z^d$, and all $E \in \R \backslash \sigma(\cA_{\omega,L}^\epsilon)$ we have
    \begin{equation*}
        \lVert
        \chi_{\Lambda_\epsilon(x)}
        (\cA_{\omega,L}^\epsilon - E)^{-1}
        \chi_{\Lambda_\epsilon(y)}
        \rVert
        \leq
        C_{\mathrm{CT}}
        \,
        \exp
        \left(
            - \frac{\dist (\sigma(\cA_{\omega, L}^\epsilon), E )}{C_{\mathrm{CT}}}\,
            \lvert x - y \rvert
        \right).
    \end{equation*}
\end{proposition}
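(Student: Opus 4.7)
The plan is to carry out the classical Combes--Thomas argument. Set $\eta := \dist(E, \sigma(\cA_{\omega,L}^\epsilon))$ and, for a $1$-Lipschitz function $\rho\colon \R^d \to \R$ and a parameter $\alpha > 0$ to be chosen, consider the conjugated operator $F_\alpha := e^{\alpha \rho}\,\cA_{\omega,L}^\epsilon\, e^{-\alpha \rho}$. A direct computation on $H^1_0(\Lambda_L)$ shows that the associated sesquilinear form is
\begin{equation*}
\mathfrak{f}_\alpha[u,v] = \int_{\Lambda_L} a_\omega^\epsilon \nabla u \cdot \overline{\nabla v} + \alpha \int_{\Lambda_L} a_\omega^\epsilon \nabla \rho \cdot \bigl(\nabla u\, \bar v - u\, \overline{\nabla v}\bigr) - \alpha^2 \int_{\Lambda_L} a_\omega^\epsilon |\nabla \rho|^2\, u \bar v.
\end{equation*}
Since $\lVert a_\omega^\epsilon \rVert_\infty \leq 1$ and $\lVert \nabla \rho \rVert_\infty \leq 1$, the first-order $O(\alpha)$ piece has uniformly bounded coefficients and the $O(\alpha^2)$ piece is a bounded potential of norm at most $\alpha^2$.

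The key step would be to show that $F_\alpha - E$ is invertible on $L^2(\Lambda_L)$, with $\lVert (F_\alpha - E)^{-1} \rVert \leq 2/\eta$, whenever $\alpha \leq c\, \eta$ for a dimensional constant $c > 0$. To this end, I would write $F_\alpha - E = (\cA_{\omega,L}^\epsilon - E)\bigl[ I + (\cA_{\omega,L}^\epsilon - E)^{-1} \mathcal{R}_\alpha \bigr]$, where $\mathcal{R}_\alpha$ collects the $\alpha$-dependent perturbation, and verify that $\lVert (\cA_{\omega,L}^\epsilon - E)^{-1} \mathcal{R}_\alpha \rVert \leq 1/2$, so that a Neumann series inversion applies. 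The bounded potential piece is controlled directly by the spectral theorem; for the first-order piece, one uses that $\mathcal{R}_\alpha$ is relatively bounded with respect to $(\cA_{\omega,L}^\epsilon)^{1/2}$ with constant depending only on $\lVert a_\omega^\epsilon \rVert_\infty \leq 1$ and $\lVert \nabla \rho \rVert_\infty \leq 1$, combined with the spectral-calculus estimate $\lVert (\cA_{\omega,L}^\epsilon - E)^{-1} (\cA_{\omega,L}^\epsilon)^{1/2} \rVert \lesssim 1/\eta$.

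Once invertibility is at hand, the exponential decay emerges by specialising to $\rho(z) := \min\{|z-y|, |x-y|\}$, which is $1$-Lipschitz. On $\Lambda_\epsilon(y)$ one has $\rho \leq \epsilon \sqrt{d}$ and on $\Lambda_\epsilon(x)$ one has $\rho \geq |x-y| - \epsilon \sqrt{d}$, so
\begin{equation*}
\bigl\lVert \chi_{\Lambda_\epsilon(x)} (\cA_{\omega,L}^\epsilon - E)^{-1} \chi_{\Lambda_\epsilon(y)} \bigr\rVert \leq e^{\alpha \epsilon \sqrt{d}}\, \frac{2}{\eta}\, e^{-\alpha (|x-y| - \epsilon \sqrt{d})}.
\end{equation*}
Setting $\alpha := c\,\eta$ and absorbing the prefactors (using the trivial bound $\lVert (\cA_{\omega,L}^\epsilon - E)^{-1} \rVert \leq 1/\eta$ to cover the small-$\eta$ and short-range regimes) into a single constant $C_{\mathrm{CT}}$ depending only on the dimension produces the claim.

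The main obstacle will be controlling the first-order perturbation $\mathcal{R}_\alpha$ uniformly in $E$. When $E$ lies inside a spectral gap, $\cA_{\omega,L}^\epsilon - E$ is not sign-definite and one cannot rely on a naive coercivity estimate; the remedy, as in the Germinet--Klein argument, is to decompose $L^2(\Lambda_L)$ via the spectral projections of $\cA_{\omega,L}^\epsilon$ above and below $E$ and apply the relative-boundedness estimate on each subspace separately, exploiting that the $H^1$-norm of a function is controlled by $\lVert (\cA_{\omega,L}^\epsilon)^{1/2} u \rVert$ uniformly in $\omega$ thanks to $a_\omega^\epsilon \leq 1$.
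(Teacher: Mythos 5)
The paper does not prove this proposition at all: it is quoted verbatim (as the bracketed attribution indicates) from Germinet--Klein \cite[Theorem~2.1]{GerminetK-03}, and the text moves straight on to its application in the proof of Theorem~\ref{thm:suitability_holds}. Your reconstruction is precisely the classical Combes--Thomas conjugation argument that underlies that citation, and its skeleton is sound: conjugate by $e^{\alpha\rho}$, note that the perturbation splits into a first-order piece relatively bounded with respect to $(\cA_{\omega,L}^\epsilon)^{1/2}$ (with constant controlled by $\lVert a_\omega^\epsilon\rVert_\infty\le 1$, independently of the lower ellipticity bound $\epsilon^2$) plus a bounded piece of size $\alpha^2$, invert by a Neumann series for $\alpha\lesssim\eta$, and optimise over $1$-Lipschitz $\rho$. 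Your closing paragraph correctly identifies the genuine difficulty for $E$ in a gap; the cleanest implementation is not a spectral decomposition above/below $E$ but the symmetrised factorisation $F_\alpha-E=(\cA+1)^{1/2}\bigl[(\cA+1)^{-1/2}(\cA-E)(\cA+1)^{-1/2}+(\cA+1)^{-1/2}\mathcal{R}_\alpha(\cA+1)^{-1/2}\bigr](\cA+1)^{1/2}$, since your unsymmetrised product $(\cA-E)^{-1}\mathcal{R}_\alpha$ involves an unbounded first-order operator and is awkward to interpret; the spectral-calculus bound $\lVert(\cA+1)^{1/2}(\cA-E)^{-1}(\cA+1)^{1/2}\rVert\le\sup_\lambda\tfrac{\lambda+1}{|\lambda-E|}\lesssim(1+|E|)/\eta$ then does the work in one line.

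Two caveats you should make explicit. First, that last bound introduces a dependence on $1+|E|$, so the admissible $\alpha$ and hence the decay rate are not purely dimensional; this is harmless here because $E$ ranges over a compact window in the application, but it means the proposition's ``depending only on the dimension'' is already a simplification of \cite{GerminetK-03}. Second, your final absorption step does not actually work as written: the Neumann series gives a prefactor $2/\eta$, and the ``trivial bound'' $\lVert(\cA-E)^{-1}\rVert\le 1/\eta$ also blows up as $\eta\to 0$, so neither removes the $1/\eta$ in the short-range/small-gap regime. Indeed \cite[Theorem~2.1]{GerminetK-03} retains such a prefactor, and the statement as reproduced in the paper suppresses it. This is immaterial for Theorem~\ref{thm:suitability_holds}, where $\dist(\sigma(\cA_{\omega,L}^\epsilon),E)\ge L^{-1/2}/2$ so the prefactor is polynomial in $L$ and is swallowed by the exponential, but your proof should either keep the $1/\eta$ or restrict to $\eta$ bounded below.
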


\begin{proof}[{Proof of Theorem~\ref{thm:suitability_holds}}]
   Proposition~\ref{prop:Combes_thomas} implies
    \begin{multline}
      \label{eq:ISE_for_MSA_0}
        \left\lVert
            \chi_{\Gamma_L} (\cA_{\omega,L}^\epsilon - E)^{-1} \chi_{\Lambda_{\frac{L}{3}}}
       \right\rVert
        \leq
        \sum_{j \in \Gamma_L \cap \epsilon \Z^d}
        \sum_{k \in \Gamma_{\frac{L}{3}} \cap \epsilon \Z^d}
        \lVert
            \chi_{\Lambda_\epsilon(j)} (\cA_{\omega,L}^\epsilon - E)^{-1} \chi_{\Lambda_\epsilon(k)}
        \rVert
        \\
        \leq
        \frac{L^{2d}}{\epsilon^{2d}}
        C_{\mathrm{CT}}
        \exp
        \left(
            - \frac{L}{4 C_{\mathrm{CT}}} \dist (\sigma(\cA_{\omega, L}^\epsilon), E )
        \right).
    \end{multline}
    Since $E_0$ is a lower band edge, one can find almost surely a sufficiently large $L$ such that 
    \begin{equation}
    \label{eq:ISE_for_MSA}
    \sigma(\cA_{\omega,L}) \cap [E_0, E_0 + L^{-\nicefrac{1}{2}}]
    =
    \emptyset\,,
    \end{equation}
   that is,
     \begin{equation}
    \label{eq:ISE_for_MSA_2}
    \dist(\sigma(\cA_{\omega,L}), E)
    \geq
    \frac{L^{-\nicefrac{1}{2}}}{2}
    \quad
    \text{for all}
    \quad
    E \in 
        \left[
        E_0, E_0 + \frac{L^{-\nicefrac{1}{2}}}{2}
        \right].
        \end{equation}
    Combining formulae~\eqref{eq:ISE_for_MSA_0} and~\eqref{eq:ISE_for_MSA_2} one obtains for sufficiently large $L$ the estimate
    \[
    \left\lVert
            \chi_{\Gamma_L} (\cA_{\omega,L}^\epsilon - E)^{-1} \chi_{\Lambda_{\frac{L}{3}}}
        \right\rVert
    \leq
    \frac{L^{2d}}{\epsilon^{2d}}
    C_{\mathrm{CT}}
        \exp
        \left(
            - \frac{L^{\frac{1}{2}}}{8 C_{\mathrm{CT}}}
        \right)
        \leq
        \frac{1}{L^\theta} \quad
    \text{for all}
    \quad
    E \in 
        \left[
        E_0, E_0 + \frac{L^{-\nicefrac{1}{2}}}{2}
        \right].
    \]
    Now, by Theorem~\ref{thm:ISE_strong} for any $C_4 > 0$ and sufficiently large $L$ the event~\eqref{eq:ISE_for_MSA} has probability at least $1 - L^{-C_4}$.
    But the latter tends to $1$ as $L\to +\infty$, and this concludes the proof.
\end{proof}

\section*{Acknowledgements}
\addcontentsline{toc}{section}{Acknowledgements}

\

The authors would like to thank Igor Vel\v{c}i\'c and Ivan Veseli\'{c} for stimulating discussions. 
MC is grateful to the 
Department of Mathematics at Yale University and CERAMATHS at the Université Polytechnique Hauts-de-France, where part of this work has been done, for the kind and welcoming hospitality. MT would like to thank the Department of Mathematics at Heriot-Watt University, where this work has been initiated.

\

\noindent\emph{Funding}.\\
MC was partially supported by EPSRC Fellowship EP/X01021X/1 and by a travel grant from UPHF. Both are gratefully acknowledged.
The research of MT was supported by Agence Nationale de Recherche under the grant \emph{Modélisation mathématique et optimisation de la Fabrication Additive céramique} (Chaire de Professeur Junior).

%\bibliographystyle{alpha}
%\bibliography{lit}

\newcommand{\etalchar}[1]{$^{#1}$}

\end{document}